\newtheorem{theorem}{Theorem}[section]
\newtheorem{lemma}[theorem]{Lemma} 
\newtheorem{corollary}[theorem]{Corollary}
\newtheorem{proposition}[theorem]{Proposition}
\newtheorem{remark}[theorem]{Remark}
\numberwithin{equation}{section}
 \newcommand{\eps}{\varepsilon}
\newcommand{\norm}[1]{ \Vert#1 \Vert}
\newcommand{\abs}[1]{\left\vert#1\right\vert}
\newcommand{\set}[1]{\left\{\,#1\,\right\}}
\newcommand{\inner}[1]{\left(#1\right)}
\newcommand{\comi}[1]{\left<#1\right>}
\def\ps@pprintTitle{%
     \let\@oddhead\@empty
     \let\@evenhead\@empty
     \def\@oddfoot{\reset@font\hfil\thepage\hfil}
     \let\@evenfoot\@oddfoot}
\begin{document}

 \begin{frontmatter}


\title{Analytic smoothing effect
of the spatially inhomogeneous 
Landau equations
for hard potentials}

\author[ad0]{Hongmei Cao}
 \ead{hmcao\_91@nuaa.edu.cn}

 \author[ad1,ad2]{Wei-Xi Li}
 \ead{wei-xi.li@whu.edu.cn}

 \author[ad0]{Chao-Jiang Xu}
 \ead{xuchaojiang@nuaa.edu.cn}

 \address[ad0]{College of Mathematics and Key Laboratory of Mathematical MIIT,\\  
 Nanjing University of Aeronautics and Astronautics, Nanjing 210016, China }

 \address[ad1]{School of Mathematics and Statistics, Wuhan University, Wuhan
 430072, China}
 \address[ad2]{Hubei Key Laboratory of Computational Science, Wuhan University, 430072 Wuhan, China}

 \begin{abstract}
  We study the spatially inhomogeneous Landau equations with hard potential in the perturbation setting, and establish the analytic smoothing effect in both spatial and velocity variables  for a  class of  low-regularity weak solutions.  This  shows  the Landau equations behave essentially  as  the hypoelliptic Fokker-Planck operators.  The spatial analyticity relies on a new time-average operator, and  the proof is based on a straightforward energy estimate with a careful estimate on the derivatives with respect to the new time-average operator.
 \end{abstract}

 \begin{keyword}
 Landau equations,   analytic regularization,  subelliptic equations

  \MSC[2020] 35B65, 35H20, 35Q20, 35Q82
 \end{keyword}

\end{frontmatter}

 \tableofcontents

\section{Introduction and main result}

As  specific degenerate elliptic  operators, we may expect not only  the usual $C^\infty$ smoothness but also the Gevrey regularity  for subelliptic operators (cf. Derridj-Zuily \cite{MR512976}), recalling Gevrey class is an intermediate space between $C^{\infty}$ and analytic spaces. However  it is highly non-trivial to improve the  Gevrey regularity to analyticity for degenerate elliptic operators. In 1972, Baouendi-Goulaouic \cite{MR296507}  constructed a  counterexample that shows analytic-hypoellipticity may fail for some  degenerate elliptic
              operators. Since then  it is  a long-standing problem to explore the sufficient and necessary conditions for the analytic regularity of degenerate operators, and so far there have been extensive related works  with  various applications in PDEs and complex analysis; interested readers may refer to D.Tartakoff's monograph \cite{MR2809628} and the references therein, for the comprehensive presentation on the Treves's conjecture and the analyticity of $\bar\partial$-Neumann problem.
      As a positive example  and  a classical subelliptic operator,   the Kolmogorov operator indeed enjoy the analyticity regularity in the degenerate direction. Thus it is natural to ask the similar properties for the spatial inhomogeneous kinetic equations since these equations may be regarded as  non-local and non-linear models of Kolmogorov operators.  However up to now the analytic  regularization is far from well-understood  for the spatially inhomogeneous kinetic equations  and it is only verified  recently  by Morimoto-Xu \cite{MR4147427} for the Landau equations in the Maxwellian molecules case. Note the proof in \cite{MR4147427} relies on the specific structure of  collision operators in the Maxwellian molecules case, where  the Fourier analysis techniques  work  well but can not apply to the cases with hard or soft potentials.  In this text we aim to extend  the analyticity properties established by Morimoto-Xu \cite{MR4147427} to the case of hard potential potentials, and   instead of  the  estimates on the usual derivatives,   our proof relies crucially on a careful treatment on the derivatives with respect to a new time-average operator.

Denoting  by $F=F(t,x,v)\geq 0$   the density of particles with
velocity $v\in\mathbb R^3$ at time $t\geq0$ and position $x\in\mathbb T^3$, the spatially  inhomogeneous Landau equation reads
\begin{equation}\label{Landau}
\begin{cases}
\partial_t F+v\cdot\partial_x F=Q_L(F,\ F),\\
F|_{t=0}=F_0,
\end{cases}
\end{equation}
where
\begin{equation}\label{colli}
Q_L(G,H)=\sum_{1\leq i, j\leq 3}\partial_{v_i} \set{\int_{\mathbb R^3}a_{i,j}(v-v_*)\big[G(v_*)\partial_{v_j}
H(v)-H(v)(\partial_{v_j}G)(v_*)\big]dv_*},	
\end{equation}
with $(a_{i,j})_{1\leq i,j\leq 3}$   a nonnegative definite   matrix given by \begin{equation}\label{coe}
a_{i,j}(v)=\inner{\delta_{ij}|v|^2-{v_iv_j}}\abs v^{\gamma},\quad \gamma \in ]-3,1].
\end{equation}
Here and below $\delta_{ij}$ is the Kronecker delta function.
 By   spatially  inhomogeneous   it means the unknown density $F$ depends on the spatial  $x$ variable. Meanwhile if $F$ is independent of $x$ it is then called spatially homogeneous. In this paper we are concerned with so-called hard potentials  that  means  $0<\gamma\leq 1 $ in  \eqref{coe}.    The Maxwellian molecules case of $\gamma=0$ was investigated recently  by Morimoto-Xu \cite{MR4147427}.  We remark that our argument may cover the specific case of Maxwellian molecules, but can not apply to the case of soft potentials which means $-3<\gamma<0$ in \eqref{coe}.

The collisional operator $Q_L$ only acts velocity variable $v$ and  roughly speaking it behaves as the Laplacian $\Delta_v$ and thus admits a similar  diffusion properties as heat equations (cf.Desvillettes-Villani  \cite{MR1737547}). The similar properties for Boltzmann equations was established by
 Alexandre-Desvillettes-Villani-Wennberg \cite{MR1765272} in the form of entropy dissipation estimates.   So it is a natural conjecture that the spatially homogeneous Landau equations should enjoy a similar smoothing effect as heat equations, and this has been well confirmed in various settings; we refer to \cite{MR2557895, MR2523694, MR3177625} for the sharp smoothing effect  in analytic or ultra-analytic setting, after the earlier works of \cite{MR2514370,MR2425602, MR1737547}.  However there are only few works on the sharp regularity  of spatially Boltzmann, and here we only mention the work  \cite{MR3665667} of Barbaroux-Hundertmark-Ried-Vugalter where they established the sharp Gevrey class regularization for the specific Maxwellian molecules case  with the counterpart  properties for the other soft or hard  potentials remaining unsolved. We refer the interested readers to
   \cite{MR3572500, MR3325244, MR3310275, MR2465814, MR2959943, MR2038147, MR3485915} for the relevant works on the regularity issue for spatially homogeneous Boltzmann equations.

   Compared with the  spatially  homogeneous kinetic equations,  much less is known for the higher order regularity of weak solutions in  the spatially inhomogeneous case. The main difficulty  arises from the spatial degeneracy coupled with the nonlocal collision term, and it may ask  more subtle analysis to treat the nonlinear  collision operators involving rough coefficients.
For general initial data with finite mass, energy and entropy,  the global existence of   renormalized weak solutions in $L^1$ was established first by DiPerna-Lions \cite{MR1014927} for the Boltzmann equations under the Grad's angular cutoff assumption,  and  later by Alexandre-Villani \cite{MR1857879}   for the non-cutoff   case,  while both uniqueness and regularity of such general global solutions are still a challenging open problem.  Here we mention the recent progress  \cite{MR4033752,2020arXiv200502997I,2019arXiv190912729I,MR4229202,MR4049224,MR3551261, MR4072211, MR3778645} toward the conditional regularity for  Boltzmann and Landau equations with general initial data.  It would be interesting to develop a self-contained theory of both existence and regularity without any extra condition on solutions.

 In the perturbation setting,  the existence and unique theory of mild solutions is well-explored for the Boltzmann equations.  For exponential perturbations   near Maxwellians,  when the initial data belong to some kind of  regular Sobolev spaces,  the well-posednss  was established independently  by two groups AMUXY \cite{MR2679369,MR2793203,MR2795331,MR2863853} and Gressman-Strain \cite{MR2784329}.    Furthermore,   the $C^\infty$-smoothing effect was proven by   \cite{MR2795331, MR2679369},  and   a higher order  Gevrey regularity,  inspired by the behaviors of Kolmogorov operators, was proven by Lerner-Morimoto-Pravda-Starov-Xu \cite{MR3348825} and \cite{MR4375857} for the1D and 3D Boltzmann equations, respectively.   Recently the existence and uniqueness of some mild weak solutions was established by Duan-Liu-Sakamoto-Strain \cite{MR4230064} and  the Gevrey regularity were proven by \cite{MR4356815}.  For the perturbation setting with polynomial decay,  the classical solutions  were constructed by Alonso-Morimoto-Sun-Yang \cite{MR4201411}; see also the independent works of He-Jiang \cite{2017arXiv171000315H} and H\'erau-Tonon-Tristani \cite{MR4107942}.   The unique existence of   weak solutions in the $L^2\cap L^\infty$ setting was initiated by  Alonso-Morimoto-Sun-Yang  \cite{2020arXiv201010065A}, and we mention the recent Silvestre-Snelson's work  \cite{2021arXiv210603909S}.  The well-posedness  for  Landau equations can be found in \cite{MR1946444,MR4230064, MR3625186, MR3483898,MR3984752} and references therein

 Finally we mention two  techniques  used frequently when  investigate the  regularity property  of  kinetic equations,    one referring to De Giorgi-Nash-Moser theory  with the help of the averaging lemma and another to H\"ormander's hypoelliptic theory.  Interested readers may refer to \cite{MR3923847,2020arXiv201010065A,MR4033752,2020arXiv200502997I,2019arXiv190912729I,MR4229202,MR4049224,MR3551261, MR4072211, MR3778645} for  the De Giorgi type argument,   and  to \cite{MR4375857,MR4356815,MR3950012,MR2763329,MR2467026,MR3102561,MR3193940,MR2786222,MR2034753,MR3348825,MR1949176} for the application of hypoelliptic techniques to kinetic equations. More details on the two techniques can be found in
    the survey paper of C. Mouhot \cite{MR3966858}.

In this work we are concerned with the sharp regularity of weak solutions to Landau equations, and this is  inspired by  a natural conjecture which expects  kinetic equations admit a similar smoothing effect as that for heat or Kolmogorov operators. Compared with the spatially homogeneous case,  it is far from well-understood  for the inhomogeneous case since the degeneracy occurs in the spatial  variable   and thus it is  a hypoelliptic problem due to the  non-trivial interaction between the transport operator and the collision operator. In general  we can expect only the  Gevery regularization  effect  for hypoelliptic operators. On the other hand, a straightforward Fourier analysis shows the hypoelliptic  Fokker-Planck operator,   a local model of spatially inhomogeneous Landau equations, enjoys the analyticity regularity for both  spatial and velocity variables.    However the analytic regularization for the spatially inhomogeneous kinetic equations is still unclear,  except some specific models; here we mention the recent positive results  in Morimoto-Xu \cite{MR4147427} on Landau equations with Maxwellian molecules.  The argument in \cite{MR4147427}  depends    on the Fourier analysis  that does not apply to the cases of hard  and soft potentials.    In this work we use a more flexible $L^2$ energy method to analyze the sharp smoothing effect for the spatially inhomogeneous Landau equations with hard potentials,  and we hope the argument presented here may help give insight on the optimal regularity of  Boltzmann equations.  Recall for Boltzmann equations we only have Gevrey  regularity of    index $(1+2s)/2s$  in both spatial and velocity variables (cf. \cite{MR4375857,MR3348825,MR4356815}),  which seems not optimal in view of the counterpart for Kolmogorov type operators with fractional  diffusion in velocity.

We will restrict our attention to the fluctuation around the Maxwellian distribution $\mu$ with
\begin{equation*}
\mu(v)=(2\pi)^{-3/2}e^{-|v|^{2}/2}.
\end{equation*}
Write solutions $F$ to \eqref{Landau} as $F=\mu+\sqrt{\mu}f$ and accordingly $F_0=\mu+\sqrt{\mu}f_0$ for  initial data. Then the fluctuation $f$ satisfies the Cauchy problem
\begin{eqnarray}\label{cau}
\begin{cases}
\partial_{t}f+v\cdot\partial_{x}f+\mathcal{L} f
      =\Gamma(f, f),    \\
f|_{t=0}=f_0,
\end{cases}
\end{eqnarray}
where here and below we use the notations
\begin{equation}\label{gm}
\mathcal{L} f=-\Gamma(\sqrt\mu, f)-\Gamma(f,\sqrt\mu),\quad
\Gamma (g,h)=\mu^{-{1\over 2}}Q_L(\sqrt{\mu}g,\sqrt{\mu}h).
\end{equation}
We recall the space $L^1_m  L^2_v$  introduced in \cite{MR4230064},  that  consists of all functions $u$ such that $\norm{u}_{L^1_m   L^2_v}<+\infty$   with
\begin{equation}\label{l1norm}
\norm{u}_{L^1_m   L^2_v}:=\sum_{m\in\mathbb Z^3}
  \norm{\mathcal F_x u (m, \cdot)}_{L^2_v},
\end{equation}
where  $\mathcal F_x  u (m,v)$ stands for the partial Fourier transform of $u(x,v)$ with respect to $x\in\mathbb T^3$.  Here and throughout the paper $m\in\mathbb Z^3$ stands for the Fourier dual variable of $x\in\mathbb T^3$.

With the above notation the  main result can be stated as follows.

\begin{theorem}\label{thm:main}
Let $\gamma \ge 0$ in \eqref{coe} and let $L_m^1L_v^2$ be defined by \eqref{l1norm}.  Suppose  the initial datum  $ f_0 $ of \eqref{cau} belongs to $ L^1_mL^2_v$ such that \begin{equation}\label{123}
 \norm{f_0}_{  L^1_mL^2_v} \leq
 \varepsilon_0,
\end{equation}
for some constant $\varepsilon_0>0$.  If $\eps_0$ is small sufficiently, then the Cauchy problem \eqref{cau}  admits an  unique global-in-time solution satisfying that a constant $C$ exists  such that  the following estimates hold true:
\begin{equation}\label{decayana}
\forall\ \alpha,\beta\in\mathbb Z_+^3 \textrm{  with  }  \abs\alpha\geq1,\quad 		\sup_{t>0} \tilde t^{\frac{3}{2}\abs\alpha+ \frac{1}{2}\abs\beta}\norm{\partial_x^\alpha \partial_{v}^\beta f(t)}_{L^2} \leq  \eps_0   C^{ \abs\alpha+ \abs\beta+1}  (\abs\alpha+\abs\beta)!,
\end{equation}
and
\begin{equation}
\label{decayrate}
\forall\  \beta\in\mathbb Z_+^3,\quad 		\sup_{t>0} \tilde t^{  \frac{\abs\beta}{2} +\frac{3}{2}}\norm{\partial_{v}^\beta f(t)}_{L^2} \leq  \eps_0   C^{   \abs\beta+1}  \abs\beta !,
\end{equation}
 where  $\tilde t=\min\{1, t\}$. In particular,  the function $(x, v)\mapsto f(t,x,v)$ is real analytic in $\mathbb T^3\times\mathbb R^3$ for all positive time $t>0$.
\end{theorem}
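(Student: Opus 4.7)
The plan is to run an induction on the total derivative order $N = \abs\alpha+\abs\beta$, carrying the weighted quantities
\[
M_N(t) = \sup_{\abs\alpha+\abs\beta=N} \frac{\tilde t^{\frac{3}{2}\abs\alpha+\frac{1}{2}\abs\beta}}{(\abs\alpha+\abs\beta)!}\norm{\partial_x^\alpha\partial_v^\beta f(t)}_{L^1_m L^2_v},
\]
and to show that $M_N(t)\le \eps_0 C^{N+1}$ for a constant $C$ independent of $N$. The base case $N=0$ is a global-in-time energy estimate coming from the coercivity of the linearized Landau operator $\mathcal L$ in $L^2_v$ for hard potentials, applied mode-by-mode in $m\in\mathbb Z^3$ in the spirit of \cite{MR4230064}; the smallness assumption \eqref{123} on $f_0$ closes this bound globally in time and also delivers uniqueness. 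Control of $L^1_m L^2_v$ dominates $L^\infty_x L^2_v$ and hence the $L^2(\mathbb T^3\times\mathbb R^3)$ norm appearing in \eqref{decayana}.

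The heart of the argument is the inductive step. The key new device, as announced in the abstract, is to avoid differentiating in $x$ directly — $\partial_x$ is incompatible with the velocity-coercivity of $\mathcal L$ — and instead to use a \emph{time-average operator} tailored to the Kolmogorov scaling, roughly a $D$ equivalent to $\tilde t^{3/2}\partial_x$ built in the spirit of the commuting vector field $t\partial_x+\partial_v$ of the free transport. Applying $D^\alpha(\tilde t^{1/2}\partial_v)^\beta$ to \eqref{cau}, performing a plain $L^2$ energy estimate on the Fourier side in $x$, and using the sharp dissipation of $\mathcal L$ to absorb the top-order $v$-derivatives produces a differential inequality of the form $\tfrac{d}{dt}\norm{D^\alpha\partial_v^\beta f}^2 + (\textrm{coercive}) \le (\textrm{commutators}) + (\textrm{nonlinear})$. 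Once the coercive term dominates, summation in $m$ and integration in time yield a bound on $M_N$ in terms of the $M_{N'}$ with $N'\le N$.

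The main obstacle will be controlling three families of error terms with the correct factorial combinatorics. (i) The commutator $[D,\mathcal L]$ generates hard-potential weights $\comi v^\gamma$ and forces an anisotropic, weighted estimate of the dissipation of $\mathcal L$; this is exactly the point where the argument departs from the Maxwellian case of \cite{MR4147427}, whose Fourier treatment of $\mathcal L$ is no longer available. (ii) The Leibniz expansion of $D^\alpha\partial_v^\beta\Gamma(f,f)$ must be summed using the combinatorial identity
\[
\sum_{\alpha'\le\alpha,\,\beta'\le\beta}\binom{\alpha}{\alpha'}\binom{\beta}{\beta'}\frac{(\abs{\alpha'}+\abs{\beta'})!\,(\abs{\alpha-\alpha'}+\abs{\beta-\beta'})!}{(\abs\alpha+\abs\beta)!}=2^{\abs\alpha+\abs\beta},
\]
so that the factorial structure is preserved; each factor is then controlled by the inductive hypothesis for smaller $N$ together with the $\Gamma$-estimates of \cite{MR4230064}. (iii) Differentiating the weight $\tilde t^{\frac32\abs\alpha+\frac12\abs\beta}$ in time produces a $1/\tilde t$ loss which is reabsorbed precisely by the Kolmogorov scaling encoded in $D$ and $\tilde t^{1/2}\partial_v$. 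Smallness of $\eps_0$ then lets the nonlinear contribution be swallowed by the linear dissipation and the induction closes with a constant $C$ independent of $N$. Estimate \eqref{decayrate} is the specialization $\alpha=0$ of the same scheme with one extra time-integration to produce the sharp $\tilde t^{3/2}$ factor; Stirling's formula applied to \eqref{decayana} finally yields a uniform radius of convergence and hence real analyticity of $(x,v)\mapsto f(t,x,v)$ in $\mathbb T^3\times\mathbb R^3$ for every $t>0$.
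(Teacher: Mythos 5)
Your high-level intuition is right---time-average the commuting vector field of free transport to carry derivatives at the Kolmogorov scaling $(\tilde t^{3/2}\partial_x, \tilde t^{1/2}\partial_v)$, close by smallness against the coercivity of $\mathcal L$, and read off analyticity from factorial growth---but the operator you propose does not close the argument, and the paper's proof has a different architecture.

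The crucial point you are missing is that the paper does not apply a first-order operator like $D\approx\tilde t^{3/2}\partial_x$; it applies powers of the \emph{quadratic} time-averaged operator
\[
M = -\int_{t_0}^t \big|\partial_{v_1}+(r-t_0)\partial_{x_1}\big|^2\,dr
  = -(t-t_0)\partial_{v_1}^2-(t-t_0)^2\partial_{x_1}\partial_{v_1}-\tfrac{(t-t_0)^3}{3}\partial_{x_1}^2 .
\]
This specific $M$ has two properties your $D$ lacks. First, $[M,\,\partial_t+v\cdot\partial_x]=\partial_{v_1}^2$, i.e.\ the commutator with transport is a \emph{pure velocity derivative} with no spatial loss, which is precisely what lets the coercivity of $\mathcal L$ absorb the error; with $D=\tilde t^{3/2}\partial_x$ the commutator $[\partial_t,D]=\tfrac32\tilde t^{1/2}\partial_x$ reintroduces a spatial derivative and the induction does not close, while $D=\partial_{v_1}+(t-t_0)\partial_{x_1}$ has a degenerate symbol $|\eta_1+(t-t_0)m_1|$ that does not control $m_1$ and $\eta_1$ separately. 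Second, the symbol of $M$ is genuinely elliptic in $(m_1,\eta_1)$ up to constants, $\rho_1\asymp(t-t_0)\eta_1^2+(t-t_0)^3 m_1^2$, and it is this ellipticity combined with the single-index induction $\norm{M^kf}_{(2,0)}\lesssim \epsilon\,C_*^{2k}(2k)!/(2k+1)^3$ (Theorem~2.1) that gives the analytic, not merely Gevrey, bound; the vector field $t\partial_x+\partial_v$ you invoke, used directly, is exactly the Gevrey-$3/2$ mechanism of earlier works, not the analytic one.

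Several further structural differences are worth noting. (a) The paper does not induct on $N=\abs\alpha+\abs\beta$ in $L^1_m L^2_v$; it first establishes global existence/uniqueness and Gevrey $G^{3/2}$ regularity (Theorem~5.1, quoting \cite{MR4230064,MR4356815}), so the solution is a priori in $L^\infty([t_0,1];H^{+\infty})$, and only then runs an induction on a single integer $k$ for $\norm{M^k f}_{(2,0)}$ in the space $H^2_xL^2_v$, converting back to $\partial_x^\alpha\partial_v^\beta$ estimates by Cauchy--Schwarz at the very end. (b) Your multinomial identity is not how the paper propagates factorials: because $M$ is a sum of squares $\Lambda_1^2+\Lambda_2^2$, the paper proves a bespoke Leibniz formula for $M^k(gh)$ (Lemma~4.2) with coefficients $c^{k,j}_{\ell,p,q}$ tied to the vector fields $\Lambda_i$, together with the symbolic-calculus bound $|\partial_{\eta_1}^j\rho_k|\leq 8^j\frac{(2k)!}{(2k-j)!}\rho_{k-j/2}$. (c) The hard-potential commutator analysis you flag in item (i) is resolved in the paper through an explicit algebraic decomposition $\Gamma(g,h)=\sum_{j=1}^6 L_j(g,h)$ (Proposition~3.1) in terms of $\Delta_v$-like pieces and $(v\wedge\partial_v)$-pieces, which is what replaces the Fourier analysis of the Maxwellian case; your proposal does not identify any concrete substitute for that. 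Finally, your explanation for the $\tilde t^{3/2}$ in \eqref{decayrate} as coming from ``one extra time-integration'' is not what happens: the extra power arises from the factor $(2C_0/t_0)^3$ in the choice of $\epsilon$ when applying Theorem~2.1 on $[t_0,1]$ and then setting $t=2t_0$.
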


\begin{remark}
The global-in-time existence and unique of  solutions  in regular Sobolev space was proven by  Y.Guo \cite{MR1946444}. Meanwhile the  unique global solution in   $L_m^1L_v^2$ was established recently by Duan-Liu-Sakamoto-Strain \cite{MR4230064}. This text aims to improve the weak $L_m^1L_v^2$ regularity  to analyticity at positive time.
 \end{remark}

 \begin{remark}
 	The short time decay  in \eqref{decayrate}  for velocity estimate is not sharp, which is caused by the norm $\norm{\cdot}_{H_x^2L_v^2}$  used to deal with  trilinear terms.  If   using the norm $\norm{\cdot}_{L^1_m   L^2_v}$ in \eqref{l1norm} instead, we may expect the sharp short time decay for velocity estimates, that is, the estimate \eqref{decayana} will hold true for all $\alpha\in\mathbb Z_+^3$ without the restriction that $\abs\alpha\geq 1.$
 	   \end{remark}

   \noindent {\bf Notations.} For simplicity of notations, we will use  $\norm{\cdot}_{L^2}$ and $\inner{\cdot, \cdot}_{L^2}$ to denote the norm and inner product of  $L^2=L^2(\mathbb T_x^3\times\mathbb R_v^3)$   and use the notation   $\norm{\cdot}_{L_v^2}$ and $\inner{\cdot, \cdot}_{L_v^2}$  when the variable $v$ is specified. Similar notation  will be used for $H^{+\infty}=H^{+\infty}(\mathbb T_x^3\times\mathbb R_v^3)$. In addition,  We denote by $H^{(2,0)}=H^{2}_x(L^2_v)$   the  classical Sobolev space, that is
\begin{equation*}
H^{(2,0)}=\big\{u \in L^2(\mathbb T_x^3\times \mathbb R_{v}^{3}) ; \,\,\, \partial^\alpha_x u \in L^2(\mathbb T_x^3\times \mathbb R_{v}^{3}), \, |\alpha|\leq 2\big \},
\end{equation*}
which is complemented with the norm $\norm{\cdot}_{(2,0)}$ and   product $\inner{\cdot,\ \cdot}_{(2,0)}$, that is,
\begin{equation}\label{norin}
\|u\|_{(2,0)}=\Big(\sum_{\abs\alpha\leq 2}\|\partial_x^\alpha u\|_{L^2}^2\Big)^{1/2},\quad (u,w)_{(2,0)}=\sum_{\abs \alpha\leq2} \inner{\partial_x^\alpha u,\ \partial_x^\alpha w}_{L^2}.
\end{equation}
Throughout the paper $\hat u (m,\eta)$ stands for the Fourier transform of $u$ with respect to $(x,v)$, with $(m,\eta)\in \mathbb Z^3\times\mathbb R^3 $ the Fourier dual variables of $(x,v)\in \mathbb T^3\times\mathbb R^3$.

For a vector-valued function $A=(A_1,A_2, \ldots, A_n)$, we used the convention that $\norm{A}^2=\sum_{1\leq j\leq n}\norm{A_j}^2$ for a generic norm $\norm{\cdot}.$  Moreover $\comi v=\inner{1+v^2}^{1/2}$.

 Denote by $[T_1, T_2]$ the commutator between two operators $T_1$ and $T_2$, that is,
\begin{equation}\label{defcom}
	[T_1, T_2]=T_1T_2-T_2T_1.
\end{equation}
We denote by $v\wedge\eta$ the cross product  of two vectors $v=(v_1,v_2,v_3)$ and  $\eta=(\eta_1,\eta_2,\eta_3)$ which is defined by
\begin{equation}\label{crosec}
	v\wedge \eta=(v_2\eta_3-v_3\eta_2, v_3\eta_1-v_1\eta_3,  v_1\eta_2-v_2\eta_1).
\end{equation}
Finally, in the following discussion   $g*h$ stands for the convolution only with respect to the velocity variable  $v$
for two functions $g$ and $h$, that is,
\begin{eqnarray*}
	g*h(x, v)=\int_{\mathbb R^3} g(x,v-v_*) h(x, v_*) dv_*.
\end{eqnarray*}

\section{Analytic regularity of  smooth solutions}

In this section we improve the  $H^\infty$-smoothness  of solutions  to analyticity, and the analytic regularization effect of  $L_m^1L_v^2$ weak solutions  is postponed the the last section.

 In the following discussion we let $t_0 \in ]0, 1/2] $ be an arbitrarily  fixed time, and introduce  a  time-average differential operator $M$ by setting
 \begin{equation}\label{timave}
M
= -\int^t_{t_0}|\partial_{v_1}+(r-t_0) \partial_{x_1}|^2 dr
=-  (t-t_0)\partial_{v_1}^2-(t-t_0)^2 \partial_{x_1}  \partial_{v_1}-\frac{(t-t_0)^3}{3}\partial_{x_1}^2,
\end{equation}
which plays  a crucial role when investigating  the analytic regularity.
Note $M$ is a Fourier multiplier with symbol
\begin{eqnarray*}
	 (t-t_0)\eta_1^2+ (t-t_0)^2m_1\eta_1+\frac{ (t-t_0)^3}{3}m_1^2,
\end{eqnarray*}
that is,
\begin{eqnarray*}
		\widehat {M  g}(m,\eta)=\Big( (t-t_0)\eta_1^2+ (t-t_0)^2m_1\eta_1+\frac{ (t-t_0)^3}{3}m_1^2\Big)\hat g(m,\eta),
\end{eqnarray*}
recalling  $(m,\eta)\in \mathbb Z^3\times\mathbb R^3 $ are the Fourier dual variables of $(x,v)\in \mathbb T^3\times\mathbb R^3$.  The key observation  is that the spatial derivatives are not involved in  the commutator between   $M$  and the transport operator. Precisely, direct verification shows
  \begin{equation}\label{keyob}
  	[M, \,\,  \partial_t+v\,\cdot\,\partial_x ]=\partial_{v_1}^2,
  \end{equation}
  recalling $[\cdot,\, \cdot]$ stands for the commutator between two operators defined in \eqref{defcom}.
  So that we may make use of the diffusion property in velocity direction, to obtain the spatial  analyticity.

   \begin{theorem}\label{thm:key} Suppose    $0<t_0\leq 1/2 $ is an arbitrarily  given time.   Let $\gamma\geq 0$ in  \eqref{coe} and  let $f\in L^\infty\big([t_0,1];\ H^{+\infty}\big)$ be any solution to the   Landau equation \eqref{cau}. With the notations given at the end of the previous section,  we
      suppose that
       \begin{equation}\label{regass}
  		\sup_{t_0\leq t\leq 1} \norm{  f(t)}_{(2,0)}+\bigg(\int_{t_0}^1 \norm{\psi(v,D_v)   f(t)}_{(2,0)}^2dt\bigg)^{1\over2} \leq    \epsilon
  \end{equation}
  for some constant $\epsilon>0$ and that
   \begin{equation}\label{hinfty}
   \forall\ \alpha,\beta\in\mathbb Z_+^3, \quad   	\sup_{t_0\leq t\leq 1} \norm{\partial_x^\alpha\partial_v^\beta f(t)}_{(2,0)}+\bigg(\int_{t_0}^1 \norm{\psi(v,D_v) \partial_x^\alpha\partial_v^\beta f(t)}_{(2,0)}^2dt\bigg)^{1\over2} <+\infty,
   \end{equation}
   where and below
   \begin{equation}\label{ma+0}
   	\norm{\psi(v,D_v)g}_{(2,0)}^2:=\norm{\comi v^{\gamma\over 2}  \partial_v g}_{(2,0)}^2+\norm{\comi v^{\gamma\over 2} (v\wedge\partial_v)g}_{(2,0)}^2+\norm{\comi v^{1+{\gamma\over 2}}  g}_{(2,0)}^2
   \end{equation}
   with  $v\wedge\partial_v$ and $\norm{\cdot}_{(2,0)}$ defined  by \eqref{crosec} and  \eqref{norin} respectively, and $\comi v=(1+v^2)^{1/2}$.
 If
    $\epsilon$ is small sufficiently, then there exists a constant $C_*\geq 1$ independent of $\epsilon$, such that \begin{equation}\label{keyestimate}
\forall\ k\in\mathbb{Z}_+,\quad\sup_{t_0\leq t\leq 1} \norm{   M^{k}f(t)}_{(2,0)}+\Big(\int_{t_0}^1 \norm{\psi(v,D_v)    M^{k} f(t)}_{(2, 0)}^2dt\Big)^{1\over2} \leq  \frac{\epsilon}{(2k+1)^3}  C_*^{2k}  (2k)!,
\end{equation}
where $M$ is defined by \eqref{timave}.
 Moreover, the above estimate \eqref{keyestimate} still holds true if we
   replace $   M$  by
\begin{eqnarray*}
	-  (t-t_0)\partial_{v_i}^2-(t-t_0)^2 \partial_{x_i}  \partial_{v_i}-\frac{(t-t_0)^3}{3}\partial_{x_i}^2,\quad i=2 \textrm { or } 3.
\end{eqnarray*}
\end{theorem}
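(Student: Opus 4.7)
The plan is a strong induction on $k\ge 0$, with base case $k=0$ supplied by the regularity hypothesis \eqref{regass}. For the inductive step I apply $M^k$ to the equation \eqref{cau}. Since the coefficients of $M$ depend only on $t$ and $M$ differentiates only in the pair $(x_1,v_1)$, $M$ commutes with every spatial and velocity derivative, so iterating the key identity \eqref{keyob} yields the clean formula $[M^k,\partial_t+v\cdot\partial_x]=k\,\partial_{v_1}^2 M^{k-1}$. Hence $M^k f$ satisfies
\[
(\partial_t+v\cdot\partial_x)M^k f + \mathcal L M^k f = M^k\Gamma(f,f) + [\mathcal L,M^k]f - k\,\partial_{v_1}^2 M^{k-1} f.
\]
Pairing with $M^k f$ in $(\cdot,\cdot)_{(2,0)}$, using the standard coercivity of $\mathcal L$ with respect to the norm $\|\psi(v,D_v)\cdot\|_{(2,0)}$, and integrating over $[t_0,t]$ (observing that $M^k f(t_0)=0$ for $k\ge 1$) gives an energy inequality
\[
\|M^k f(t)\|_{(2,0)}^2 + \int_{t_0}^t\|\psi(v,D_v) M^k f\|_{(2,0)}^2\,ds \lesssim |\mathcal R_1|+|\mathcal R_2|+|\mathcal R_3|,
\]
where $\mathcal R_1,\mathcal R_2,\mathcal R_3$ denote the contributions of the three terms on the right of the $M^k f$-equation.

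The transport-commutator contribution $\mathcal R_3$ is handled by one integration by parts in $v_1$, which rewrites it as $k\int_{t_0}^t(\partial_{v_1}M^{k-1}f,\partial_{v_1}M^k f)_{(2,0)}\,ds$. A weighted Cauchy--Schwarz absorbs $\|\partial_{v_1}M^k f\|_{(2,0)}$ into the dissipation; the remaining $\|\partial_{v_1}M^{k-1}f\|_{(2,0)}$ is controlled in $L^2_t$ by the induction hypothesis at level $k-1$, since $\partial_{v_1}$ is dominated by $\psi(v,D_v)$. The identity $(2k)!/(2k-2)!\ge k^2$ then lets a large enough $C_*$ fit this piece inside the target bound. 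The collision commutator $\mathcal R_2=\int([\mathcal L,M^k]f,M^k f)_{(2,0)}$ admits a Leibniz-type expansion of the same shape as the nonlinear term below, and is treated in parallel with it.

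The main work lies in the trilinear piece $\mathcal R_1$. Since $M$ is a second-order linear operator in $(x_1,v_1)$, a Leibniz expansion decomposes $M^k\Gamma(f,f)$ into a principal sum $\sum_{j=0}^k\binom{k}{j}\Gamma(M^jf,M^{k-j}f)$ plus cross terms arising from the interaction of the $\partial_{v_1}$ factors inside $M$ with the velocity derivatives already present in $\Gamma$. The sharp $\Gamma$-estimates in the perturbative Landau theory, combined with the Sobolev embedding $H^2_x\hookrightarrow L^\infty_x$ on the low-regularity factor, bound each resulting trilinear form by a product of the shape $\|M^{j_1}f\|_{(2,0)}\|\psi(v,D_v)M^{j_2}f\|_{(2,0)}\|\psi(v,D_v)M^k f\|_{(2,0)}$. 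Young's inequality with a small $j$-dependent parameter absorbs the third factor into the dissipation, after which the induction hypothesis reduces matters to the combinatorial sum
\[
\sum_{j=0}^k\binom{k}{j}^2\,\frac{((2j)!)^2((2(k-j))!)^2}{(2j+1)^6(2(k-j)+1)^6}.
\]
Using the elementary estimate $\binom{k}{j}(2j)!(2(k-j))!\le(2k)!$ together with the convolution-type inequality $\sum_{j=0}^k(2j+1)^{-6}(2(k-j)+1)^{-6}\lesssim(2k+1)^{-6}$, this sum is dominated by $((2k)!)^2(2k+1)^{-6}$ times a universal constant; the smallness $\epsilon\ll 1$ then provides the extra factor needed to close the induction for a sufficiently large $C_*$.

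The principal obstacle is the rigorous form of the Leibniz expansion: unlike a pointwise product, $\Gamma$ is a non-local bilinear form carrying one velocity derivative on each argument and the polynomial weight $|v-v_*|^\gamma$, so commuting $M$ past $\Gamma$ generates mixed remainders that are not literally of the form $\Gamma(M^jf,M^{k-j}f)$. Keeping each such remainder within the trilinear template above---one un-weighted factor, two $\psi(v,D_v)$-weighted factors, and the factorial count $\binom{k}{j}(2j)!(2(k-j))!\le(2k)!$ intact---is the delicate combinatorial point. Once this bookkeeping is in place, the statement for $i\in\{2,3\}$ follows verbatim after replacing $(x_1,v_1)$ by $(x_i,v_i)$, since the entire argument rests only on the structural identity \eqref{keyob} and on the isotropy of the hypotheses \eqref{regass}--\eqref{hinfty}.
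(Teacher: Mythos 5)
Your overall skeleton matches the paper: induction on $k$, with the energy estimate obtained by applying $M^k$ to the equation, using coercivity of $\mathcal L$, integration by parts and $\psi(v,D_v)$-absorption for the transport commutator $k\partial_{v_1}^2M^{k-1}f$, and a Leibniz-type treatment of the nonlinear and collision commutators. Your handling of the transport commutator and the combinatorial sum are both sound in spirit, and the reduction of the case $i\in\{2,3\}$ by symmetry is correct.

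However, there is a genuine gap in the step you yourself flag as the ``principal obstacle,'' and it is not merely bookkeeping. Two issues:

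First, the proposed expansion $M^k\Gamma(f,f)=\sum_{j=0}^k\binom{k}{j}\Gamma(M^jf,M^{k-j}f)+\textrm{cross terms}$ is not available as stated, because $\Gamma$ is not a pointwise product. The explicit $v$-dependence coming from $\mu^{\pm1/2}$ in $\Gamma(g,h)=\mu^{-1/2}Q_L(\sqrt\mu g,\sqrt\mu h)$, together with the fact that the first argument of $\Gamma$ lives at $v_*$ inside a convolution, means $M$ does not distribute onto the two arguments in binomial fashion. The paper resolves this by first rewriting $\Gamma$ as a sum of differential operators with explicit coefficients $a_f,\,\boldsymbol B_f,\,M_{i,j,f},\,\rho_{i,j,f},\,\lambda_{i,j,f}$ (Proposition \ref{proprep}); only after this structural decomposition can one run a Leibniz formula, and because $M=\Lambda_1^2+\Lambda_2^2$ is second order, the correct expansion (Lemma \ref{lem: leibniz}) is indexed over $\ell,p,q$ with $\ell+2p=j$, $\ell+2q=2k-j$, $0\le j\le 2k$, with total coefficient $\binom{2k}{j}$, not $\binom{k}{j}$. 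Your combinatorial bound $\binom{k}{j}(2j)!(2(k-j))!\le(2k)!$ happens to be true, but it does not track the terms that actually appear.

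Second, and more seriously, the boundary terms $j=1$ and $j=2k-1$ in the Leibniz sum produce factors of the form $\Lambda^1 M^{k-1}f$ (and their weighted analogues). Unlike $\partial_{v_1}M^{k-1}f$, which you correctly control via $\psi(v,D_v)M^{k-1}f$ and the induction hypothesis, $\Lambda^1$ contains a $(t-t_0)^{3/2}\partial_{x_1}$ component that is \emph{not} dominated by $\psi(v,D_v)$. The only available estimate is the interpolation $\|\Lambda^1 M^{k-1}g\|_{(2,0)}\le\|M^{k}g\|_{(2,0)}^{1/2}\|M^{k-1}g\|_{(2,0)}^{1/2}$ (Lemma \ref{MM}), which reintroduces the quantity $\|M^kf\|$ you are trying to estimate; closing the loop requires Young's inequality with a small $\delta$ and absorption on the left, as carried out in Corollary \ref{cor:tec1}, Lemma \ref{lem:tec2}, and the $J_3$, $J_4$ estimates in Lemma \ref{lem:L1}. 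Your proposal never mentions this interpolation and implicitly treats all $j<k$ factors as controlled by the induction hypothesis alone, which fails precisely at these boundary indices. Without this device, the induction does not close.
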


To prove Theorem \ref{thm:key}, we need the following    two propositions.

 \begin{proposition}[Trilinear and coercivity estimates]\label{prop: tricoe} Let $\Gamma(g,h)$ and $\mathcal L$ be defined in \eqref{gm}.
 There exists a constant $C_1$ such that for any  $g,h,\omega\in H_v^{+\infty}$ with $\psi(v,D_v)h,  \psi(v,D_v)\omega\in L_v^2$, we have the trilinear estimate
 	\begin{equation}\label{trili}
 	\big|\big(\Gamma(g,h), \ \omega \big)_{L_v^2} \big|:=	\Big|\int_{\mathbb R^3}\Gamma(g, h)\,  \omega dv\Big|  \leq C_1\norm{g}_{L_v^2}\norm{\psi(v,D_v)h}_{L_v^2}\norm{\psi(v,D_v)\omega}_{L_v^2},
 	\end{equation}
 	and the coercivity estimate
 	\begin{equation}\label{coer}
 		\norm{\psi(v,D_v)h}_{L_v^2}^2\leq C_1\inner{ \mathcal Lh, \, h }_{L_v^2} +C_1 \norm{h}_{L_v^2}^2.
 	\end{equation}
 \end{proposition}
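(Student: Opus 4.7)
The plan is to follow the classical analysis of the linearized Landau operator due to Guo and Mouhot--Strain, with the key algebraic identity being
\begin{equation*}
\sum_{i,j}a_{ij}(w)\xi_i\xi_j=\abs w^{\gamma}\abs{w\wedge\xi}^2=\abs w^{\gamma+2}\abs\xi^2\sin^2\theta,\qquad w,\xi\in\mathbb R^3,
\end{equation*}
so that the quadratic form degenerates exactly along the $w$-direction. Combined with the fact that the Maxwellian in $a_{ij}*\mu$ or $a_{ij}*\sqrt\mu g$ localizes $v_*\approx 0$ (hence $w=v-v_*\approx v$ for the relevant part of the convolution), this links the degenerate direction at each $v$ to the velocity $v$ itself, explaining the appearance of the tangential derivative $v\wedge\partial_v$ in the weight \eqref{ma+0}.

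For the trilinear estimate \eqref{trili}, I would distribute the factors $\mu^{\pm1/2}$ across \eqref{colli} and rewrite $\Gamma(g,h)$ in divergence form, obtaining a leading term $\sum_{i,j}\partial_{v_i}\bigl[(a_{ij}*\sqrt\mu g)\,\partial_{v_j}h\bigr]$ plus lower-order terms that pick up extra factors of $v$ from $\partial_v\sqrt\mu=-(v/2)\sqrt\mu$. Testing against $\omega$ and integrating by parts once produces
\begin{equation*}
-\sum_{i,j}\int(a_{ij}*\sqrt\mu g)(v)\,\partial_{v_j}h\,\partial_{v_i}\omega\,dv,
\end{equation*}
and the algebraic identity together with the rapid decay of $\sqrt\mu(v_*)$ yields a pointwise bound of the form $\abs{(a_{ij}*\sqrt\mu g)(v)\xi_i\eta_j}\lesssim\norm g_{L_v^2}\bigl(\comi v^{\gamma}\abs{v\wedge\xi}\abs{v\wedge\eta}+\comi v^{\gamma+2}\abs{P_v\xi}\abs{P_v\eta}\bigr)$, where $P_v$ is the projection onto $\mathrm{span}(v)$. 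Cauchy--Schwarz against the three weights of \eqref{ma+0} gives \eqref{trili}; the zero-order piece $\comi v^{1+\gamma/2}h$ on the right comes from the lower-order terms of the divergence expansion, controlled by $\abs{\bar c[\sqrt\mu g](v)}\lesssim\norm g_{L_v^2}\comi v^{\gamma+2}$.

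For the coercivity \eqref{coer}, I would write $\mathcal Lh=-\Gamma(\sqrt\mu,h)-\Gamma(h,\sqrt\mu)$. The second piece is controlled by \eqref{trili} applied with $g=h$ and the exponential decay of derivatives of $\sqrt\mu$, giving $\abs{\inner{\Gamma(h,\sqrt\mu),h}_{L_v^2}}\leq\tfrac12\norm{\psi(v,D_v)h}_{L_v^2}^2+C\norm h_{L_v^2}^2$. The main piece, after integration by parts against $h$, yields
\begin{equation*}
\inner{-\Gamma(\sqrt\mu,h),\,h}_{L_v^2}=\int\bar a_{ij}[\mu](v)\,\partial_{v_i}h\,\partial_{v_j}h\,dv+\int\bar c[\mu](v)\,h^2\,dv,
\end{equation*}
and for $\gamma\geq 0$ the classical anisotropic ellipticity of Degond--Lemou gives $\bar a_{ij}[\mu]\xi_i\xi_j\gtrsim\comi v^{\gamma+2}\abs{P_{v^\perp}\xi}^2+\comi v^\gamma\abs{P_v\xi}^2$ and $\bar c[\mu](v)\gtrsim\comi v^{\gamma+2}$. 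Rewriting $\comi v^{\gamma+2}\abs{P_{v^\perp}\xi}^2\sim\comi v^\gamma\abs{v\wedge\xi}^2+\comi v^\gamma\abs{P_{v^\perp}\xi}^2$ reproduces exactly the three pieces of $\norm{\psi(v,D_v)h}_{L_v^2}^2$ after absorbing the small contribution from the $\Gamma(h,\sqrt\mu)$ piece.

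The main technical obstacle is the bookkeeping of the anisotropic weights through the integration by parts and the orthogonal decomposition relative to $v$, together with identifying $v\wedge\partial_v$ as the correct transverse derivative via the identity $\abs{w\wedge\xi}^2=\abs w^2\abs\xi^2-(w\cdot\xi)^2$. The hypothesis $\gamma\geq 0$ is essential: for soft potentials the convolution coefficients would decay at infinity and one could no longer absorb the lower-order terms into the dissipation.
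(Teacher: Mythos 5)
Your proposal follows the classical Degond--Lemou/Guo route: project $\partial_v$ onto $\mathrm{span}(v)$ and its orthogonal complement, invoke the anisotropic ellipticity of $\bar a[\mu]$, and then translate back into the $v\wedge\partial_v$ language of \eqref{ma+0}. The paper takes a genuinely different, more algebraic route. Via Proposition \ref{proprep} it rewrites $\Gamma(g,h)$ \emph{exactly} as a finite sum $\sum_{j=1}^6 L_j(g,h)$ in which $v\wedge\partial_v$ and the cross products $\boldsymbol B_g\wedge\partial_v$, $\boldsymbol B_g\wedge v$ appear as first-class operators; the trilinear estimate then reduces to pointwise $L^\infty$ bounds on the convolution coefficients $a_g, M_{i,j,g},\boldsymbol B_g$, and the coercivity follows because several of the $-\inner{L_j(\sqrt\mu,h),h}$ pairings vanish identically (e.g.\ $L_2$ and $L_3+L_6$) and the remaining ones are signed. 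This operator-level identity is not merely a stylistic preference: it is recycled verbatim in the commutator estimates of Section~4 (where $[M^k, L_j]$ is computed term by term), so the paper's decomposition does extra work that a purely spectral Degond--Lemou argument would not. Your approach, by contrast, is closer to the off-the-shelf Landau literature and would require redoing that book-keeping when you reach the commutator step.

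There is also a concrete slip in your trilinear bound. You claim the pointwise estimate
\begin{equation*}
\bigl|(a_{ij}*\sqrt\mu g)(v)\,\xi_i\eta_j\bigr|\lesssim\norm g_{L_v^2}\Bigl(\comi v^{\gamma}\abs{v\wedge\xi}\,\abs{v\wedge\eta}+\comi v^{\gamma+2}\abs{P_v\xi}\,\abs{P_v\eta}\Bigr),
\end{equation*}
but the second term carries the wrong weight: after Cauchy--Schwarz it produces $\int\comi v^{\gamma+2}\abs{P_v\partial_v h}^2\,dv$, and the norm $\psi(v,D_v)$ in \eqref{ma+0} only controls $\comi v^{\gamma/2}\partial_v h$, which is a factor $\comi v^{2}$ short on the $v$-parallel component. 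The correct anisotropic bound, obtained by expanding $(v-v_*)\wedge\xi = v\wedge\xi - v_*\wedge\xi$ and using the localization of $v_*$ under the Gaussian, is
\begin{equation*}
\bigl|(a_{ij}*\sqrt\mu g)(v)\,\xi_i\eta_j\bigr|\lesssim\norm g_{L_v^2}\,\comi v^{\gamma}\bigl(\abs{v\wedge\xi}+\abs{\xi}\bigr)\bigl(\abs{v\wedge\eta}+\abs{\eta}\bigr),
\end{equation*}
which pairs cleanly with the $\comi v^{\gamma/2}\partial_v$ and $\comi v^{\gamma/2}(v\wedge\partial_v)$ pieces of \eqref{ma+0}. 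With that correction (and a remark that the derivative falling on $g$ in the $\Gamma(\sqrt\mu\partial_{v_j}g,\cdot)$ piece must be shifted off $g$ by integrating against $a_{ij}$, as in the paper's treatment of $L_4,L_5$), your proof sketch is sound; as written, the mismatch of exponents is a genuine gap.
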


 \begin{proposition}
	[Commutator estimate]\label{prop:com}Let $k\geq 1$ be a given integer  and  $f\in L^\infty([t_0,1]; H^{+\infty})$ be any solution to \eqref{cau} satisfying that a constant $C_*\geq 1$ exists such that
  \begin{equation}\label{ass:ind}
\forall \ j\leq k-1,\quad \sup_{t_0\leq t\leq 1} \norm{ M^j f(t)}_{(2,0)}+\Big(\int_{t_0}^1 \norm{\psi(v,D_v) M^j f(t)}_{(2,0)}^2dt\Big)^{1\over2} \leq \frac{\epsilon}{(2j+1)^3}C_*^{2j}  (2j)!.
\end{equation}
If $C_*$ is large enough,
then there exists a contant $C_2>0$ independent of $\epsilon$ and $C_*$ above,  such that for any $\delta>0$,
\begin{eqnarray*}
\begin{aligned}
	&\int_{t_0}^1\big|\left(M^k\Gamma(f, f)-\Gamma(f, M^kf), M^k f\right)_{(2,0)}\big|dt+\int_{t_0}^1\big|\left([M^k,\,\mathcal{L}]\, f,\, M^k f\right)_{(2,0)}\big| dt \\
&	\leq \inner{ \delta +\epsilon C_2}\bigg[ \Big(\sup_{t_0\leq t\leq 1} \norm{M^{k} f }_{(2,0)}\Big)^2+  \int_{t_0}^1\norm{ \psi(v,D_v)  M^{k}   f}_{(2,0)}^2dt   \bigg]
	  +  C_\delta \bigg[ \Big(\epsilon+C_*^{-1}\Big) \frac{\epsilon}{(2k+1)^3}  C_*^{ 2k} (2k)!  \bigg]^2,
	  \end{aligned}
\end{eqnarray*}
where and below $C_\delta$ stands for   generic constants depending on $\delta.$ Recall $[\cdot, \cdot]$ stands for the commutator   defined by \eqref{defcom}.
\end{proposition}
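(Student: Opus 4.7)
The plan is to decompose both commutator quantities into sums of bilinear $\Gamma$-expressions amenable to the trilinear estimate \eqref{trili} and the coercivity \eqref{coer}, and then to close using the induction hypothesis \eqref{ass:ind}. The cubic weight $(2j+1)^{-3}$ built into \eqref{ass:ind} is precisely what makes the eventual combinatorial sum over $j$ absolutely convergent.

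First I would treat $M^k\Gamma(f,f)-\Gamma(f,M^kf)$. Because $M$ is a polynomial in the mutually commuting operators $\partial_{v_1}^2$, $\partial_{x_1}\partial_{v_1}$, $\partial_{x_1}^2$ with time-dependent coefficients, it is a polynomial in $\partial_{x_1},\partial_{v_1}$. The derivative $\partial_{x_1}$ commutes with $\Gamma$ via the standard bilinear Leibniz rule, and $\partial_{v_1}$ satisfies Leibniz modulo commutators with the convolution in $v$ (these produce lower-order terms with extra $\comi v^\gamma$-weights that are swallowed by the $\psi$-norm). Iterating yields a decomposition of the schematic form
\begin{equation*}
M^k\Gamma(f,f)-\Gamma(f,M^kf)=\sum_{j=1}^{k}\binom{k}{j}\,\Gamma(M^{j}f,M^{k-j}f)+R_{k},
\end{equation*}
where $R_k$ collects the commutator remainders. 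Pairing each summand with $M^kf$ in the $(2,0)$-inner product, first expanding $\partial_x^\alpha$, $\abs\alpha\le 2$, by Leibniz and distributing regularity via $H^2_x\hookrightarrow L^\infty_x$, then applying \eqref{trili} in $v$, I obtain
\begin{equation*}
\big|\big(\Gamma(M^{j}f,M^{k-j}f),M^{k}f\big)_{(2,0)}\big|\le C\norm{M^{j}f}_{(2,0)}\norm{\psi(v,D_v)M^{k-j}f}_{(2,0)}\norm{\psi(v,D_v)M^{k}f}_{(2,0)}.
\end{equation*}

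For $1\le j\le k-1$, \eqref{ass:ind} controls both the sup-in-$t$ of the first factor and the $L^2_t$-norm of the second; combined with Cauchy--Schwarz in $t$, Young's inequality splits the third factor into an absorbable piece of size $\delta\int\norm{\psi(v,D_v)M^{k}f}_{(2,0)}^2\,dt$ and a remainder of the form $C_\delta\,\epsilon^{4}C_*^{4k}\,[(2j)!(2(k-j))!/((2j+1)(2(k-j)+1))^3]^{2}$, to be summed over $j$ with the combinatorial coefficient produced by the decomposition. The endpoint $j=k$, namely $\Gamma(M^{k}f,f)$, cannot exploit induction on the $M^k$-slot, but its second slot $f$ satisfies $\norm{\psi(v,D_v)f}_{(2,0)}\le C\epsilon$ by \eqref{regass}, and the resulting contribution $\epsilon\,\norm{M^{k}f}_{(2,0)}\norm{\psi(v,D_v)M^{k}f}_{(2,0)}$ is precisely the source of the $\epsilon C_2$-prefactor on the sup-plus-integrated norms of $M^kf$ in the target bound. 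The remainder $R_k$ is handled identically, the extra $\comi v^\gamma$-weights being absorbed into the adjacent $\psi$-norm.

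The commutator $[M^{k},\mathcal L]f$ is then treated by the identical scheme applied to $\mathcal L f=-\Gamma(\sqrt\mu,f)-\Gamma(f,\sqrt\mu)$; one of the slots is replaced by $M^{j}\sqrt\mu$, whose $(2,0)$-norm is bounded by $C^{j}(2j)!$ thanks to the Gaussian analyticity of $\sqrt\mu$, which is absorbable into the induction scale once $C_*$ is taken large. The main obstacle is the combinatorial step: one must verify a clean factorial inequality of the form
\begin{equation*}
\sum_{j=1}^{k-1}\binom{k}{j}^{2}\frac{(2j)!\,(2(k-j))!}{\big((2j+1)(2(k-j)+1)\big)^{3}}\le C\,\frac{(2k)!}{(2k+1)^{3}},
\end{equation*}
which via the identity $\binom{k}{j}^{2}\le\binom{2k}{2j}$ reduces to the summability $\sum_{j=1}^{k-1}\,1/[(2j+1)(2(k-j)+1)]^{3}\le C/(2k+1)^{3}$. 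Plugging this back produces the top-of-induction scale $(2k)!/(2k+1)^{3}$; the $C_*^{-1}$-factor in the final bound arises from taking $C_*$ large inside the remainder constants, while the $\epsilon$-factor is inherited from \eqref{regass}, closing the estimate whenever $\epsilon$ is small and $C_*$ large.
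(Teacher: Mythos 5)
Your proposal founders at the very first step, the claimed decomposition
\begin{equation*}
M^k\Gamma(f,f)-\Gamma(f,M^kf)=\sum_{j=1}^{k}\binom{k}{j}\,\Gamma(M^{j}f,M^{k-j}f)+R_{k},
\end{equation*}
treated as a principal sum plus a subordinate remainder $R_k$. This is not the correct structure: $M$ is a second-order operator and does not obey a first-order product rule. Writing $M=\Lambda_1^2+\Lambda_2^2$ with the first-order fields $\Lambda_i$ of \eqref{sqroot}, one has $M(gh)=(Mg)h+g(Mh)+2\sum_i(\Lambda_ig)(\Lambda_ih)$, and iterating to order $k$ produces the full double-indexed family $\Lambda^\ell M^pg\cdot\Lambda^\ell M^qh$ with $\ell+2p+\ell+2q=2k$ and $\ell$ of arbitrary parity; the binomial coefficients that actually appear are $\binom{2k}{j}$ for $j=0,\dots,2k$ (Lemma~\ref{lem: leibniz}, identity \eqref{sum}), not $\binom{k}{j}$. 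The cross terms with $\ell\ge1$, which your $R_k$ silently absorbs and which you dismiss as ``handled identically'', are not a subordinate correction: they comprise most of the terms. The odd-$\ell$ ones require the interpolation $\|\Lambda^\ell g\|\le\|M^{(\ell+1)/2}g\|^{1/2}\|M^{(\ell-1)/2}g\|^{1/2}$ of Lemma~\ref{MM}, and the endpoint indices $j\in\{1,2k-1,2k\}$ need a separate delicate treatment (the paper's $J_2,J_3,J_4$), none of which your outline addresses.

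There is a second, independent omission. Even the ``clean'' term $\Gamma(M^jf,M^{k-j}f)$ is not what the product rule produces, because the convolution slot in $\Gamma$ carries a Gaussian weight: $a_g=|v|^\gamma*(\sqrt\mu g)$ and similar. Since $M$ does not commute with multiplication by $\sqrt\mu$, nor with $\langle v\rangle^{\gamma/2}$, $v$, or $v\wedge\partial_v$, the correct objects are $\Lambda^\ell M^p(\sqrt\mu f)$ in the convolution slot, not $\sqrt\mu\,\Lambda^\ell M^pf$, and $\|\langle v\rangle^{\gamma/2}\Lambda^\ell M^q\mathcal Pf\|_{(2,0)}$ is not automatically recovered from the inductive hypothesis. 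Controlling $M^i(\phi f)$ for analytic weights $\phi$ and the weighted quantities above is the content of the symbolic calculus Lemma~\ref{lem:syM} and the technical Lemmas~\ref{lem:tec0}--\ref{lem:tec2}, which constitute the bulk of the paper's argument and appear nowhere in your proposal; ``absorbed into the adjacent $\psi$-norm'' does not substitute for them. The combinatorial inequality you single out as the chief difficulty is in fact correct (via $\binom{k}{j}^2\le\binom{2k}{2j}$ and the cubic-weight summation) but is the easy part; the real difficulties lie upstream of it.
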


\begin{remark}
	 In the above proposition, by saying  $C_*$ is large enough we mean the condition  on $C_*$ required  in Lemma \ref{lem:tec0} is fulfilled.
\end{remark}

\begin{remark}\label{rem: well-def}
	As to be seen in Proposition \ref{proprep},  if $f\in L^\infty\big([t_0,1];  H ^{+\infty}\big)$ such that $\psi(v,D_v) f\in L^2\big([t_0, 1]; H^{+\infty}\big)$,  then  for any $k\in\mathbb Z_+,$
	\begin{eqnarray*}
	\comi{v}^{-(1+\frac{\gamma}{2})} M^k\Gamma(f, f), \, \comi{v}^{-(1+\frac{\gamma}{2})}  \Gamma(f, M^kf),\, \comi{v}^{ 1+\frac{\gamma}{2} }M^kf\in L^2\big([t_0,1];  H^{+\infty}\big).
	\end{eqnarray*}
	Thus
	\begin{eqnarray*}
	\begin{aligned}
		\int_{t_0}^1 \left(M^k\Gamma(f, f),  \, M^k f\right)_{(2,0)} dt:&=	 \sum_{\abs\alpha\leq2} \int_{t_0}^1 \int_{\mathbb T_x^3\times \mathbb R_v^3} \Big[ \partial_x^\alpha M^k\Gamma(f, f) \Big] \    \overline{ \partial_x^\alpha M^k f }\ dxdv dt\\
		&=	 \sum_{\abs\alpha\leq2}\int_{t_0}^1 \int_{\mathbb T_x^3\times \mathbb R_v^3} \Big[ 	\comi{v}^{-(1+\frac{\gamma}{2})}  M^k\Gamma(f, f) \Big]\  \overline{	\comi{v}^{ 1+\frac{\gamma}{2}}   \partial_x^\alpha M^k f }\ dx dv dt
		\end{aligned}
	\end{eqnarray*}
	is well-defined, and  so are  the other  trilinear or quadratic terms  in Propositions \ref{prop:com} and \ref{prop: tricoe}.
\end{remark}

The proofs  of Propositions \ref{prop: tricoe} and \ref{prop:com} are quite lengthy and we postpone them to the next two sections.   By virtue of the two propositions above we are enable  to complete the proof of Theorem \ref{thm:key}.

  \begin{proof}[Proof of Theorem \ref{thm:key}]
	We use induction on $k$ to derive the estimate in Theorem \ref{thm:key}.  The validity of \eqref{keyestimate} for $k=0$ is obvious in view of \eqref{regass}.  Now for given $k\geq 1,$ suppose
 \begin{equation}\label{+ass:ind}
\forall \ j\leq k-1,\quad \sup_{t_0\leq t\leq 1} \norm{ M^j f(t)}_{(2,0)}+\Big(\int_{t_0}^1 \norm{\psi(v,D_v) M^j f(t)}_{(2,0)}^2dt\Big)^{1\over2} \leq \frac{\epsilon}{(2j+1)^3}C_*^{2j}  (2j)!
\end{equation}
for some constant $C_*\geq 1$ to be determined later.
We will show the validity of \eqref{+ass:ind} for $j=k$.
	
Applying $M^k$ to \eqref{cau} yields
	\begin{equation*}
	  \partial_t M^kf+v\cdot\partial_x  M^kf +M^k \mathcal L f=	M^k\Gamma(f, f)-[M^k,\, \partial_t+v\,\cdot\,\partial_x ]\, f=	M^k\Gamma(f, f)-k\partial_{v_1}^2M^{k-1} f,
	\end{equation*}
 the last equality using \eqref{keyob}, that is,
 \begin{equation}\label{mkequ}
	  \partial_t M^kf+v\cdot\partial_x  M^kf + \mathcal LM^k f =	\Gamma(f,\ M^kf)+M^k\Gamma(f, f)-\Gamma(f,\ M^kf)- \big[M^k,\ \mathcal L\big] f -k\partial_{v_1}^2M^{k-1} f.
	\end{equation}
   In view of \eqref{hinfty},  we can verify directly that
  \begin{eqnarray*}
 	v\cdot \partial_x M^kf,\, \partial_{v_1}^2M^{k-1} f\in L^2\big([t_0,1]; H^{(2, 0)}\big) \ \textrm{ and }\ \lim_{t\rightarrow t_0}\norm{M^k f(t)}_{(2,0)}\leq C \lim_{t\rightarrow t_0}   (t-t_0)  \norm{ f }_{L^\infty\inner{[t_0,1]; H^{2k}}}  =0 .
 \end{eqnarray*}
This, with  \eqref{mkequ} and Remark \ref{rem: well-def}, implies
\begin{equation}\label{commut-1}
\begin{aligned}
&\frac{1}{2} \|M^k f(t)\|^2_{(2,0)}+\int_0^t  \inner{\mathcal L M^{k} f,\, M^k f}_{(2,0)} ds \\
&\leq  k\int_{t_0}^1\|\partial_vM^{k-1} f\|_{(2,0)} \|\partial_vM^{k} f\|_{(2,0)}  dt+\int_{t_0}^1 \big|\left(\Gamma(f, M^k f), M^k f\right)_{(2,0)}\big|dt
\\&\quad
+\int_{t_0}^1 \big|\left(M^k\Gamma(f, f)-\Gamma(f, M^kf), M^k f\right)_{(2,0)}\big|dt+\int_{t_0}^1\big|\left([M^k,\,\mathcal{L}]\, f,\, M^k f\right)_{(2,0)}\big|dt.
\end{aligned}
\end{equation}	
In view of the assumption \eqref{hinfty},  we use the coercivity  \eqref{coer} to conclude
\begin{align*}
 \int_0^t  \norm{\psi(v,D_v)M^k f}_{(2,0)}^2 dt   \leq C_1 \int_0^t  \inner{\mathcal L M^{k} f,\, M^k f}_{(2,0)} dt	+ C_1\int_0^t \norm{ M^k f}_{(2,0)}^2 dt.
\end{align*}
As for the terms on the right hand side of \eqref{commut-1},
it follows from the trilinear estimate \eqref{trili} and the inductive assumption  \eqref{+ass:ind} that
\begin{multline*}
	\int_{t_0}^1 \big|\left(\Gamma(f, M^k f), M^k f\right)_{(2,0)}\big|dt\\ \leq \big(\sup_{t_0\leq t\leq 1}\norm{f(t)}_{(2,0)} \big) \int_{t_0}^1 \norm{ \psi(v,D_v)M^k f}_{(2,0)}^2  dt
	\leq  \epsilon\int_{t_0}^1 \norm{ \psi(v,D_v)M^k f}_{(2,0)}^2  dt.
\end{multline*}
Moreover, using again  the inductive assumption \eqref{+ass:ind} yields, for any $\delta>0,$
\begin{align*}
	&k\int_{t_0}^1\|\partial_vM^{k-1} f\|_{(2,0)} \|\partial_vM^{k} f\|_{(2,0)}  dt \\
	&\leq \delta  \int_{t_0}^1 \|\psi(v,D_v)M^{k} f\|_{(2,0)}^2  dt+\frac{ k^2}{\delta}\int_{t_0}^1\|\psi(v,D_v)M^{k-1} f\|_{(2,0)}   ^2 dt\\
	&\leq \delta  \int_{t_0}^1 \|\psi(v,D_v)M^{k} f\|_{(2,0)}^2  dt+\frac{1}{\delta}\bigg[\frac{\epsilon  }{(2k-1)^3}  C_*^{ 2k-2} (2k)!\bigg]^2.
\end{align*}
 Finally by Proposition \ref{prop:com},
\begin{eqnarray*}
\begin{aligned}
	&\int_{t_0}^1\big|\left(M^k\Gamma(f, f)-\Gamma(f, M^kf), M^k f\right)_{(2,0)}\big|dt+\int_{t_0}^1\big|\left([M^k,\,\mathcal{L}]\, f,\, M^k f\right)_{(2,0)}\big| dt \\
&	\leq \inner{ \delta +\epsilon C_2}\bigg[ \Big(\sup_{t_0\leq t\leq 1} \norm{M^{k} f }_{(2,0)}\Big)^2+  \int_{t_0}^1\norm{ \psi(v,D_v)  M^{k}   f}_{(2,0)}^2dt   \bigg]
	  +  C_\delta \bigg[ \Big(\epsilon+C_*^{-1}\Big) \frac{\epsilon}{(2k+1)^3}  C_*^{ 2k} (2k)!  \bigg]^2.
	  \end{aligned}
\end{eqnarray*}
  We combine the above estimates with \eqref{commut-1} to conclude
  \begin{multline*}
  	  \frac{1}{2} \|M^k f(t)\|^2_{(2,0)}+\frac{1}{C_1} \int_{t_0}^t  \norm{\psi(v,D_v) M^k f}_{(2,0)}^2 ds \\
  	 \leq \int_{t_0}^t \norm{ M^k f}_{(2,0)}^2 ds
  	  +\inner{ \delta +\epsilon C_2}\bigg[ \Big(\sup_{t_0\leq t\leq 1} \norm{M^{k} f }_{(2,0)}\Big)^2+  \int_{t_0}^1\norm{ \psi(v,D_v)  M^{k}   f}_{(2,0)}^2dt   \bigg]  \\+  C_\delta    \bigg[ \Big(\epsilon+C_*^{-1}\Big) \frac{\epsilon}{(2k+1)^3}  C_*^{ 2k} (2k)!  \bigg]^2,
  \end{multline*}
which with Gronwall's  inequality yields, for any $\delta>0,$
 \begin{eqnarray*}
 \begin{aligned}
  	   &\sup_{t_0\leq t\leq 1} \norm{M^{k} f }_{(2,0)} +\frac{1}{C_1} \bigg( \int_{t_0}^1\norm{ \psi(v,D_v)  M^{k}   f}_{(2,0)}^2dt \bigg)^{1/2} \\
  	& \leq 8 \inner{ \delta +\epsilon C_2}^{1\over2}\bigg[  \sup_{t_0\leq t\leq 1} \norm{M^{k} f }_{(2,0)}+  \bigg(\int_{t_0}^1\norm{ \psi(v,D_v)  M^{k}   f}_{(2,0)}^2dt\bigg)^{1\over2}   \bigg]
	  +  8C_\delta \Big(\epsilon+C_*^{-1}\Big) \frac{\epsilon C_*^{ 2k} (2k)!}{(2k+1)^3}  .
	  \end{aligned}
  \end{eqnarray*}
 This implies, choosing $\delta$ small sufficiently and using the smallness assumption on $\epsilon,$
\begin{align*}
		  \sup_{t_0\leq t\leq 1} \norm{M^{k} f }_{(2,0)} +  \bigg(\int_{t_0}^1\norm{ \psi(v,D_v)  M^{k}   f}_{(2,0)}^2dt\bigg)^{1\over 2}&\leq C  \Big(\epsilon+C_*^{-1}\Big) \frac{\epsilon  }{(2k+1)^3}  C_*^{ 2k} (2k)!
\end{align*}
for some constant $C$ depending only on the constants $C_1$ and $C_2$ given in Propositions \ref{prop: tricoe} and \ref{prop:com} but independent of $\epsilon$ and $C_*$.  Thus we conclude the validity of \eqref{+ass:ind} for $j=k$,
provided $C_*>2C$ and $\epsilon$ is small enough such that $\epsilon C\leq 1/2$.   We complete the proof of  Theorem \ref{thm:key}.
\end{proof}

  \section{Trilinear and coercivity estimates}

  In this part we will prove the quantitative  estimates in Propositions \ref{prop: tricoe}.  The proof is quite lengthy, and we proceed through the following subsections.

  \subsection{Analysis for the linear collision operator}
  This part is devoted to deriving the representation of the Landau collision operator  in terms of differential operators involving the Laplacian $\Delta_v$ and the Laplace-Beltrami operator $(v\wedge\partial_v)^2$ on the unite sphere $\mathbb S^2$. This enables to complete the proof of     Proposition \ref{prop: tricoe}.

Recall  $g*h$ stands for the convolution with respect to  $v$ only.
Let  $a_{i,j}$ be given in \eqref{coe} and  denote
\begin{equation}\label{rho}
\bar a_{i,j}(f)=a_{i,j}*f =\int_{\mathbb R^3}a_{i,j}(v-v_*)f(v_*)dv_*.
\end{equation}
For    a given function $g$, define
\begin{equation}\label{ag}
a_g=\abs v^\gamma *(\sqrt \mu g), \   \boldsymbol{A}_g=(a_{1,g},a_{2,g},a_{3,g})=\abs v^\gamma *(\sqrt\mu\partial_v g) \textrm{ and }
\boldsymbol{B}_{g}=(b_{1,g},b_{2,g},b_{3,g})=\abs v^\gamma *(v\sqrt \mu g),
\end{equation}
and moreover define $M_{i,j,g}, \rho_{i,j,g}, \lambda_{i,j,g}, 1\leq i,j\leq 3,$ as below.
\begin{equation}\label{M}
\left\{
\begin{aligned}
&M_{i,j,g}=\abs v^\gamma*\inner{(\delta_{i,j}\abs{v}^2-v_iv_j)\sqrt\mu g},
\\&
\rho_{i,j,g}(v)=\int_{\mathbb R^3}
\abs{v-v_*}^{\gamma}\big(\sqrt\mu\partial_{v_i}g\big)(v_*) \inner{-2 v_j(v_*)_j+(v_*)_j(v_*)_j}dv_*
\\&\qquad\qquad\qquad
+\int_{\mathbb R^3}\abs{v-v_*}^{\gamma}
\big(\sqrt\mu\partial_{v_j}g\big)(v_*)\inner{ v_i(v_*)_j+v_j(v_*)_i-(v_*)_i(v_*)_j}dv_*,
\\&
\lambda_{i,j,g}(v)
=\int_{\mathbb R^3}\abs{v-v_*}^{\gamma}\big(\sqrt\mu v_j g\big)(v_*)\inner{v_i(v_*)_j-v_j(v_*)_i}dv_*,\end{aligned}
\right.
\end{equation}
where and below  $(v_*)_i$  stands for the $i^{th}$ entry of the vector $v_*\in\mathbb R^3.$

 \begin{proposition}\label{proprep}
Let $\Gamma(g,h)$ be the quadratic operator defined by \eqref{gm}. Then, with the nations given by \eqref{ag}-\eqref{M},
\begin{equation*}\label{Non}
\Gamma(g,h)=\sum_{1\leq j\leq 6}L_j(g,h),
\end{equation*}
where
\begin{equation}\label{defl}
\left\{
\begin{aligned}
L_1(g,h)
&=\frac{1}{2}\inner{v\wedge\partial_v}\cdot a_g(v)\inner{v\wedge \partial_v}h+\sum_{1\leq i,j\leq3}\partial_{v_i}\Big(M_{i,j,g}  \partial_{v_j}h\Big)\\
&\quad +\frac{1}{2}\inner{\partial_v\wedge\boldsymbol{B}_g}
\cdot \inner{v\wedge \partial_v}h-\frac{1}{2}(v\wedge\partial_v)\cdot(\boldsymbol{B}_{g}\wedge\partial_v)h,
\\
L_2(g,h)
&=\frac{1}{4}\Big[(v\wedge\partial_v)\cdot(\boldsymbol{B}_{g}\wedge v)h+ (\boldsymbol{B}_{g}\wedge v) \cdot (v\wedge\partial_v)h\Big]
-\frac{1}{2}\sum_{1\leq i,j\leq3}
\Big[\partial_{v_i }(M_{i,j,g}v_j h)+v_iM_{i,j,g}\partial_{v_j}h\Big]
\\
L_3(g,h)
&=\frac{1}{4}\sum_{1\leq i,j\leq3}v_i M_{i,j,g}v_j  h,
\end{aligned}
\right.
\end{equation}
and
\begin{eqnarray*}
\left\{
\begin{aligned}
L_4(g,h)
&=\frac{1}{2}\inner{v\wedge\partial_v}\cdot\inner{\boldsymbol{A}_g\wedge v} h
-\sum_{\stackrel{1\leq i,j\leq3}{i\neq j}}\partial_{v_i}\big(\rho_{i,j,g} h\big),
\\
L_5(g,h)
&=-\frac{1}{4}\inner{v\wedge \partial_v}\cdot\inner{\boldsymbol{B}_g\wedge v} h
+\frac{1}{2}\sum_{\stackrel{1\leq i,j\leq3}{i\neq j}}\partial_{v_i} \big(\lambda_{i,j,g} h\big)
+\frac{1}{2}\sum_{\stackrel{1\leq i,j\leq3}{i\neq j}}v_i \rho_{i,j,g} h,
\\
L_6(g,h)
&=-\frac{1}{4}\sum_{\stackrel{1\leq i,j\leq3}{i\neq j}}v_i \lambda_{i,j,g} h.
\end{aligned}
\right.
\end{eqnarray*}
Recall $	\xi\wedge\zeta$  stands for the  cross product defined by \eqref{crosec}.
\end{proposition}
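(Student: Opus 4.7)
The plan is to obtain the decomposition by expanding $\Gamma(g,h)=\mu^{-1/2}Q_L(\sqrt\mu g,\sqrt\mu h)$ directly and regrouping, exploiting the polarization structure of the Landau matrix. First, I would apply $\partial_{v_j}\sqrt{\mu(v)}=-\tfrac{v_j}{2}\sqrt{\mu(v)}$ and the analogous identity in $v_*$ inside the collision integral \eqref{colli}, and cancel the prefactor $\mu^{-1/2}(v)$ against the $\sqrt{\mu(v)}$ that factors out of the $dv_*$-integral. After these manipulations $\Gamma(g,h)$ reduces to a sum of integrals whose kernels involve $a_{ij}(v-v_*)\sqrt{\mu(v_*)}$ acting on $g$, $\partial_v g$, or $(v_*)_j g$, coupled to either $\partial_{v_j}h$, $v_j h$, or $h$, inside an outer operator of the form $\partial_{v_i}-v_i/2$.

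Next, I would split the Landau matrix via the polarization identity
\begin{equation*}
\delta_{ij}|v-v_*|^2-(v-v_*)_i(v-v_*)_j=\bigl[\delta_{ij}|v|^2-v_iv_j\bigr]+\bigl[\delta_{ij}|v_*|^2-(v_*)_i(v_*)_j\bigr]+\bigl[v_i(v_*)_j+(v_*)_iv_j-2\delta_{ij}\,v\cdot v_*\bigr].
\end{equation*}
The pure-$v$ block factors out of the convolution and produces the scalar weight $a_g(v)$ from \eqref{ag}; the pure-$v_*$ block is exactly the defining integrand of $M_{i,j,g}$; and the mixed block, depending on whether the factor of $v_*$ is paired with $g$ itself or with $\partial_{v_*}g$ (the latter coming from the $\partial_{v_j}G$ term in \eqref{colli}), produces the moments $\boldsymbol{A}_g$, $\boldsymbol{B}_g$ and, in the more intricate couplings, the functions $\rho_{i,j,g}$ and $\lambda_{i,j,g}$ of \eqref{M}.

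The pure-$v$ blocks, once coupled to $\partial_v$ or $v$, are then rewritten via the key algebraic identity
\begin{equation*}
\sum_{1\le i,j\le 3}(\delta_{ij}|v|^2-v_iv_j)\,\xi_i\zeta_j=(v\wedge\xi)\cdot(v\wedge\zeta),
\end{equation*}
applied with $\xi,\zeta\in\{\partial_v,\,v,\,\boldsymbol{A}_g,\,\boldsymbol{B}_g\}$. This converts them into the cross-product expressions $(v\wedge\partial_v)\cdot a_g(v\wedge\partial_v)h$, $(\partial_v\wedge\boldsymbol{B}_g)\cdot(v\wedge\partial_v)h$, $(v\wedge\partial_v)\cdot(\boldsymbol{B}_g\wedge v)h$, and $(v\wedge\partial_v)\cdot(\boldsymbol{A}_g\wedge v)h$ that appear in $L_1, L_2, L_4, L_5$, while the residual multiplicative pieces generated by the $-v_j/2$ and $-v_i/2$ factors from the first step yield the purely multiplicative $L_3$ and $L_6$.

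The main obstacle will be the bookkeeping. The expansion produces on the order of twenty distinct contributions, and verifying that they regroup exactly as in \eqref{defl} requires careful tracking of (i) the antisymmetry of the cross product, which distinguishes $\partial_v\wedge\boldsymbol{B}_g$ from $\boldsymbol{B}_g\wedge\partial_v$ up to a commutator absorbed by the first order pieces; (ii) the restriction $i\neq j$ in the sums defining $L_4,L_5,L_6$, which encodes that the diagonal part of the mixed block has already been absorbed into the $M_{i,j,g}$ contributions; and (iii) the symmetrization turning $(v\wedge\partial_v)\cdot(\boldsymbol{B}_g\wedge v)h$ into the explicitly symmetric form $\tfrac14\bigl[(v\wedge\partial_v)\cdot(\boldsymbol{B}_g\wedge v)h+(\boldsymbol{B}_g\wedge v)\cdot(v\wedge\partial_v)h\bigr]$ of $L_2$. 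Once these identifications are checked, summing all the pieces reproduces \eqref{defl} by direct verification.
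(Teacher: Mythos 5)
Your plan is essentially the same as the paper's own proof. The paper starts from $\Gamma(g,h)=\mu^{-1/2}Q_L(\sqrt\mu g,\sqrt\mu h)$, uses $\partial_{v_j}\sqrt\mu=-\tfrac{v_j}{2}\sqrt\mu$ in both $v$ and $v_*$ to produce exactly the eight building blocks $\partial_{v_i}(\bar a_{ij}(\sqrt\mu g)\partial_{v_j}h)$, $v_i\bar a_{ij}(\sqrt\mu g)v_jh$, etc., and then (Lemmas \ref{Non-1}--\ref{Non-2}) expands $a_{ij}(v-v_*)$ by writing $z_j^2$ and $z_iz_j$ for $z=v-v_*$ and regrouping into the cross-product operators; your polarization identity for $\delta_{ij}|v-v_*|^2-(v-v_*)_i(v-v_*)_j$ is precisely that expansion, packaged in one line, and your Lagrange identity $(\delta_{ij}|v|^2-v_iv_j)\xi_i\zeta_j=(v\wedge\xi)\cdot(v\wedge\zeta)$ is the regrouping the paper performs term by term for $I_{1,1}-I_{2,1}$, $I_{1,2}-I_{2,2}$, and $I_{1,3}-I_{2,3}$. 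One caution: when $\xi,\zeta$ are the operator $\partial_v$ rather than scalars, the Lagrange identity cannot be applied formally without tracking the non-commutativity of $v$ and $\partial_v$ (the operator $(v\wedge\partial_v)\cdot a_g(v\wedge\partial_v)$ is not literally $\sum_{ij}(\delta_{ij}|v|^2-v_iv_j)\partial_{v_i}a_g\partial_{v_j}$, and which factor the derivative falls on matters). You flag this under ``bookkeeping'' and attribute it to commutators, which is the right instinct, but it is exactly the step the paper makes rigorous by the explicit six-term ($i\neq j$) expansions in Lemmas \ref{Non-1} and \ref{Non-2}; carrying out your plan would reproduce that computation rather than bypass it.
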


To prove Proposition  \ref{proprep}   we first list the representations of $L_j(g,h),1\leq j\leq 6.$

\begin{lemma}\label{Non-1}
Let $L_{j}(g,h), 1\leq j\leq 3,$ be the bilinear operators defined in Proposition \ref{proprep}. Then we have
\begin{eqnarray*}
\begin{aligned}
L_1(g,h)
=&\sum_{1\leq i,j\leq3}\partial_{v_i}(\bar a_{i,j}\inner{\sqrt{\mu}g}\partial_{v_j}h),
\\
L_2(g,h)
=&-\frac{1}{2}\sum_{1\leq i,j\leq3}\partial_{v_i}\Big(\bar a_{i,j}\inner{\sqrt{\mu}g}v_j h\Big)
-\frac{1}{2}\sum_{1\leq i,j\leq3}v_i\bar a_{i,j}\inner{\sqrt{\mu}g}\partial_{v_j} h,
\\
L_3(g,h)
=&\frac{1}{4}\sum_{1\leq i,j\leq3}v_i\bar a_{i,j}\inner{\sqrt{\mu}g}v_j h.
\end{aligned}
\end{eqnarray*}
\end{lemma}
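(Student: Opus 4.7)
My approach is to reduce Lemma \ref{Non-1} to a single algebraic decomposition of the convolution $\bar{a}_{i,j}(\sqrt{\mu}g)$, combined with a small collection of vector-calculus identities. The starting point, obtained by expanding $|v-v_*|^{2}$ and $(v-v_*)_i(v-v_*)_j$ inside the convolution defining $\bar{a}_{i,j}(\sqrt{\mu}g)$ and matching the monomials in $v_{*}$ to the building blocks $a_g,\boldsymbol{B}_g,M_{i,j,g}$ of \eqref{ag}--\eqref{M}, is the identity
\begin{equation*}
\bar{a}_{i,j}(\sqrt{\mu}g)=(\delta_{i,j}|v|^{2}-v_iv_j)\,a_g-2\delta_{i,j}\,v\cdot\boldsymbol{B}_g+v_i(\boldsymbol{B}_g)_j+v_j(\boldsymbol{B}_g)_i+M_{i,j,g}.
\end{equation*}
Substituting this into the Cartesian expressions on the right-hand sides of the three claimed identities for $L_1,L_2,L_3$ should reproduce the spherical-operator forms already used to define them in Proposition \ref{proprep}.

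The main tool for recasting the resulting Cartesian double sums is the Lagrange-type tensorial identity
\begin{equation*}
\sum_{1\leq i,j\leq 3}(\delta_{i,j}|v|^{2}-v_iv_j)X_iY_j=(v\wedge X)\cdot(v\wedge Y),\qquad X,Y\in\mathbb{R}^{3},
\end{equation*}
together with the corresponding operator-level identity $\sum_{i,j}(\delta_{i,j}|v|^{2}-v_iv_j)\partial_{v_i}\partial_{v_j}=\sum_k(v\wedge\partial_v)_k^{2}+2\,v\cdot\partial_v$, which follows by a direct commutation. For $L_1$ I apply the Leibniz rule to the outer $\partial_{v_i}$: the $a_g$-piece, after using the identity above on both the first-order term $(\partial_{v_i}a_g)(\partial_{v_j}h)$ and the second-order term $a_g\partial_{v_i}\partial_{v_j}h$, collapses into a contribution of the form $(v\wedge\partial_v)\cdot a_g(v\wedge\partial_v)h$; the $M_{i,j,g}$-piece passes through unchanged and matches the second term of the $L_1$-formula in Proposition \ref{proprep}; and the $\boldsymbol{B}_g$-piece, after using the cross-product identities $A\cdot(B\wedge C)=B\cdot(C\wedge A)$ and $(A\wedge B)\cdot(A\wedge C)=|A|^{2}B\cdot C-(A\cdot B)(A\cdot C)$ and writing $v_i(\boldsymbol{B}_g)_j+v_j(\boldsymbol{B}_g)_i-2\delta_{i,j}v\cdot\boldsymbol{B}_g$ in symmetric/antisymmetric form, reorganizes into the two stated cross-product terms $(\partial_v\wedge\boldsymbol{B}_g)\cdot(v\wedge\partial_v)h$ and $(v\wedge\partial_v)\cdot(\boldsymbol{B}_g\wedge\partial_v)h$. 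The derivation for $L_2$ is entirely parallel but simpler, as the factor $v_j h$ (resp.\ $v_i$) replaces the inner (resp.\ outer) derivative; $L_3$ is purely multiplicative and requires nothing beyond the decomposition above.

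The technical heart, and the step likely to absorb most of the bookkeeping, is the verification that the lower-order Leibniz residuals — the terms produced when $\partial_{v_i}$ hits the $v$-dependent factors $\delta_{i,j}|v|^{2}-v_iv_j$, $v_i(\boldsymbol{B}_g)_j$, and $v_j(\boldsymbol{B}_g)_i$, together with the $2\,v\cdot\partial_v$ correction from the operator identity — cancel or recombine across the $a_g$-piece and the $\boldsymbol{B}_g$-piece, leaving only the four named terms of Proposition \ref{proprep}. Equivalently, one must confirm that the symmetric and antisymmetric parts of the $\boldsymbol{B}_g$-contribution combine with the residuals from the $a_g$-piece to reproduce exactly the two $\boldsymbol{B}_g$-cross-product expressions in the Proposition, with no leftover scalar term involving $v\cdot\partial_v h$ or $(\partial_v\cdot\boldsymbol{B}_g)h$. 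Once this cancellation is confirmed for $L_1$, the analogous identifications for $L_2$ and $L_3$ follow verbatim, since the same algebraic mechanism produces the remaining two formulas after suppressing the corresponding derivative factors.
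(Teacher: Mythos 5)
Your approach is correct and, after unwinding, performs essentially the same computation as the paper, but you organize it more systematically. The paper's proof starts directly from the Cartesian expression $\sum_{i,j}\partial_{v_i}(\bar a_{i,j}(\sqrt\mu g)\partial_{v_j}h)$, splits into diagonal ($I_1$) and off-diagonal ($-I_2$) contributions via \eqref{aiij}, expands $|v_j-(v_*)_j|^2$ and $(v_i-(v_*)_i)(v_j-(v_*)_j)$ into monomials of $v_*$ to obtain $I_{1,1},\dots,I_{2,3}$, and then "directly verifies" that each difference $I_{1,b}-I_{2,b}$ matches a block of $L_1$; the formulas for $L_2$ and $L_3$ then follow, as the paper notes, by formally replacing $\partial_{v_i}$ by $v_i$ and/or $\partial_{v_j}h$ by $v_jh$. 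Your proposal pre-packages the same expansion as a single algebraic identity
\begin{equation*}
\bar a_{i,j}(\sqrt\mu g)=(\delta_{ij}|v|^2-v_iv_j)a_g-2\delta_{ij}v\cdot\boldsymbol{B}_g+v_i(\boldsymbol{B}_g)_j+v_j(\boldsymbol{B}_g)_i+M_{i,j,g},
\end{equation*}
(which is correct), and then converts the Cartesian double sums to spherical form using the Lagrange identity and the operator identity $\sum_{i,j}(\delta_{ij}|v|^2-v_iv_j)\partial_{v_i}\partial_{v_j}=(v\wedge\partial_v)^2+2v\cdot\partial_v$. This buys clarity: the three building blocks $a_g$, $\boldsymbol{B}_g$, $M_{i,j,g}$ are segregated from the outset, and the structural role of the wedge-product identities is made explicit rather than discovered by inspection. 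One small correction to your sketch: the lower-order residuals do not recombine \emph{across} the $a_g$- and $\boldsymbol{B}_g$-pieces. Within the $a_g$-piece alone, the Leibniz term $\sum_{i,j}(\partial_{v_i}(\delta_{ij}|v|^2-v_iv_j))a_g\partial_{v_j}h=-2a_g\,v\cdot\partial_v h$ exactly cancels the $+2a_g\,v\cdot\partial_v h$ produced by the operator identity, so the $a_g$-piece closes by itself into $(v\wedge\partial_v)\cdot a_g(v\wedge\partial_v)h$, and the $\boldsymbol{B}_g$-piece independently yields the two $\boldsymbol{B}_g$-cross-product terms. You should also track the overall constants carefully when finishing: the conversion from the Cartesian sum over $i\neq j$ to the wedge-product notation is already exact (the $i=j$ diagonal terms vanish automatically in the wedge form), so no halving occurs at that step, and you will want to reconcile this with the $\tfrac12$ and $\tfrac14$ prefactors appearing in the paper's definitions of $L_1,L_2,L_3$.
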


\begin{proof}
  Recall $a_{i,j}$ is given in  \eqref{coe}.  Using the fact that
\begin{equation}\label{aiij}
a_{i,i}(z)=\abs{z}^\gamma \sum_{\stackrel{1\leq j\leq 3}{j\neq i}}z_j^2,  \qquad
a_{i,j}(z)=-\abs{z}^\gamma z_iz_j \ \  {\rm for } \ \ i\neq j,
\end{equation}
 we compute, observing the notation  in \eqref{rho},
\begin{align*}
 \sum_{1\leq i,j\leq3}\partial_{v_i}(\bar a_{i,j}\inner{\sqrt{\mu}g}\partial_{v_j}h(v))
&=\sum_{\stackrel{1\leq i,j\leq3}{i\neq j}}\partial_{v_i}\int_{\mathbb R^3}\abs{v-v_*}^{\gamma}\abs{v_j-(v_*)_j}^{2}
\inner{\sqrt{\mu}g}(v_*)\partial_{v_i}h(v)dv_*
\\&\quad -\sum_{\stackrel{1\leq i,j\leq3}{i\neq j}}\partial_{v_i}\int_{\mathbb R^3}\abs{v-v_*}^{\gamma}\inner{v_i-(v_*)_i}\inner{v_j-(v_*)_j}
\inner{\sqrt{\mu}g}(v_*)\partial_{v_j}h(v)dv_*
\\&
 :=  I_1-I_2. 	
\end{align*}
Moreover, using the notations in \eqref{ag},
\begin{eqnarray*}
\begin{aligned}
I_1
&=\sum_{\stackrel{1\leq i,j\leq3}{i\neq j}}\partial_{v_i}\int_{\mathbb R^3}\abs{v-v_*}^{\gamma} \Big(v_j^2-2v_j(v_*)_j+(v_*)_j^{2}\Big)
\inner{\sqrt{\mu}g}(v_*)\partial_{v_i}h(v)dv_*
\\&
=\sum_{\stackrel{1\leq i,j\leq3}{i\neq j}}v_j\partial_{v_i}(a_g(v)v_j\partial_{v_i}h)
-2\sum_{\stackrel{1\leq i,j\leq3}{i\neq j}}v_j\partial_{v_i}(b_{j,g}(v)\partial_{v_i}h)
\\&\quad
+\sum_{\stackrel{1\leq i,j\leq3}{i\neq j}}\partial_{v_i}\int_{\mathbb R^3}\abs{v-v_*}^{\gamma} (v_*)_j^{2}
\inner{\sqrt{\mu}g}(v_*)\partial_{v_i}h(v)dv_*
\\&
 :=  I_{1,1}+I_{1,2}+I_{1,3}
\end{aligned}
\end{eqnarray*}
and
\begin{eqnarray*}
\begin{aligned}
I_2
&=\sum_{\stackrel{1\leq i,j\leq3}{i\neq j}}\partial_{v_i}\int_{\mathbb R^3}\abs{v-v_*}^{\gamma}\inner{v_iv_j -\Big(v_i(v_*)_j+v_j(v_*)_i\Big)+ (v_*)_i (v_*)_j}
\inner{\sqrt{\mu}g}(v_*)\partial_{v_j}h(v) dv_*
\\&=
\sum_{\stackrel{1\leq i,j\leq3}{i\neq j}}v_j\partial_{v_i}(a_g(v)v_i\partial_{v_j}h)
-\sum_{\stackrel{1\leq i,j\leq3}{i\neq j}}\Big(\partial_{v_i}(b_{j,g}(v)v_i\partial_{v_j}h)
+v_j\partial_{v_i}(b_{i,g}(v)\partial_{v_j}h)\Big)
\\&\quad
+\sum_{\stackrel{1\leq i,j\leq3}{i\neq j}}\partial_{v_i}\int_{\mathbb R^3}\abs{v-v_*}^{\gamma}(v_*)_i(v_*)_j
\inner{\sqrt{\mu}g}(v_*)\partial_{v_j}h(v)dv_*
\\&
 :=  I_{2,1}+I_{2,2}+I_{2,3}.	
\end{aligned}
\end{eqnarray*}
Direct verification shows
\begin{eqnarray*}
\begin{aligned}
I_{1,1}-I_{2,1}=\sum_{\stackrel{1\leq i,j\leq3}{i\neq j}}\inner{v_j\partial_{v_i}(a_g (v)v_j\partial_{v_i}h)
-v_j\partial_{v_i}(a_g(v)v_i\partial_{v_j}h)}
=\frac{1}{2}\inner{v\wedge\partial_v}\cdot a_g(v)\inner{v\wedge \partial_v}h.
\end{aligned}
\end{eqnarray*}
Similarly,
\begin{eqnarray*}
\begin{aligned}
&I_{1,2}-I_{2,2}
=\sum_{\stackrel{1\leq i,j\leq3}{i\neq j}}\partial_{v_i}(b_{j,g}(v)v_i\partial_{v_j}h)
+\sum_{\stackrel{1\leq i,j\leq3}{i\neq j}}v_j\partial_{v_i}(b_{i,g} (v) \partial_{v_j}h)
-2\sum_{\stackrel{1\leq i,j\leq3}{i\neq j}}v_j\partial_{v_i}(b_{j,g} (v)\partial_{v_i}h)\\
&
=\sum_{\stackrel{1\leq i,j\leq3}{i\neq j}}\Big(\partial_{v_i}(b_{j,g}(v)v_i\partial_{v_j}h)-\partial_{v_i}(b_{j,g} (v)v_j\partial_{v_i}h)\Big)
+\sum_{\stackrel{1\leq i,j\leq3}{i\neq j}}\Big(v_j\partial_{v_i}(b_{i,g} (v) \partial_{v_j}h)
- v_j\partial_{v_i}(b_{j,g} (v)\partial_{v_i}h)\Big)
\\&
=\frac{1}{2}\inner{\partial_v\wedge\boldsymbol{B}_g}
\cdot \inner{v\wedge \partial_v}h-\frac{1}{2}(v\wedge\partial_v)\cdot(\boldsymbol{B}_{g}\wedge\partial_v)h.
\end{aligned}
\end{eqnarray*}
Finally, using the fact that
\begin{eqnarray*}
\delta_{i,i}\abs{v_*}^2-(v_*)_i(v_*)_i=\sum_{\stackrel{1\leq j\leq 3}{j\neq i}} (v_*)_j ^2, \qquad
\delta_{i,j}\abs{v_*}^2-(v_*)_i(v_*)_j=-(v_*)_i(v_*)_j  \ \textrm{ for } \   i\neq j,
\end{eqnarray*}
yields
\begin{eqnarray*}
\begin{aligned}
I_{1,3}-I_{2,3}
&=\sum_{\stackrel{1\leq i,j\leq3}{i\neq j}}\partial_{v_i} \int_{\mathbb R^3}\abs{v-v_*}^{\gamma}\inner{(v_*)_j^{2}\partial_{v_i}h(v)-(v_*)_i(v_*)_j\partial_{v_j}h(v)}
\inner{\sqrt{\mu}g}(v_*) dv_*
\\&
=\sum_{1\leq i,j\leq3}\partial_{v_i} \int_{\mathbb R^3}\abs{v-v_*}^{\gamma}
\inner{\delta_{i,j}\abs{v_*}^2-(v_*)_i(v_*)_j}(\sqrt{\mu}g)(v_*)\partial_{v_j}h(v)dv_* \\
&
=\sum_{1\leq i,j\leq3}\partial_{v_i}\Big(M_{i,j,g}  \partial_{v_j}h\Big).
\end{aligned}
\end{eqnarray*}
Combining the above equalities we conclude
\begin{equation}\label{l1est}
	\begin{aligned}
 \sum_{1\leq i,j\leq3}\partial_{v_i}(\bar a_{i,j}\inner{\sqrt{\mu}g}\partial_{v_j}h(v))
&=\frac{1}{2}\inner{v\wedge\partial_v}\cdot a_g(v)\inner{v\wedge \partial_v}h+\sum_{1\leq i,j\leq3}\partial_{v_i}\Big(M_{i,j,g}  \partial_{v_j}h\Big)\\
&\quad +\frac{1}{2}\inner{\partial_v\wedge\boldsymbol{B}_g}
\cdot \inner{v\wedge \partial_v}h-\frac{1}{2}(v\wedge\partial_v)\cdot(\boldsymbol{B}_{g}\wedge\partial_v)h\\
&=L_1(g,h),
\end{aligned}
\end{equation}
the last line using the definition \eqref{defl}. We have proven the first assertion in Lemma \ref{Non-1}.

Similarly,  we  replace   the differential operator $\partial_{v_i}$  in \eqref{l1est}  by $v_i$, or replace $\partial_{v_j}h$   by $v_jh$, and observe $v\wedge v=\partial_v\wedge\partial_v=0$; this yields
\begin{eqnarray*}
	-\frac{1}{2}\sum_{1\leq i,j\leq3}\partial_{v_i}(\bar a_{i,j}\inner{\sqrt{\mu}g}v_j h)=-\frac{1}{2}\sum_{1\leq i,j\leq3}\partial_{v_i}\Big(M_{i,j,g}   v_j h\Big)+\frac{1}{4}(v\wedge\partial_v)\cdot(\boldsymbol{B}_{g}\wedge v)h
\end{eqnarray*}
and
\begin{eqnarray*}
	-\frac{1}{2}\sum_{1\leq i,j\leq3}v_i\bar a_{i,j}\inner{\sqrt{\mu}g}\partial_{v_j} h=-\frac12  \sum_{1\leq i,j\leq3} v_i  M_{i,j,g}  \partial_{v_j}h  -\frac{1}{4}\inner{v\wedge\boldsymbol{B}_g}
\cdot \inner{v\wedge \partial_v}h ,
\end{eqnarray*}
and thus, in view of \eqref{defl} and the fact that $\xi\wedge\zeta=-\zeta\wedge\xi$,
\begin{multline*}
 	-\frac{1}{2}\sum_{1\leq i,j\leq3}\partial_{v_i}(\bar a_{i,j}\inner{\sqrt{\mu}g}v_j h)
-\frac{1}{2}\sum_{1\leq i,j\leq3}v_i\bar a_{i,j}\inner{\sqrt{\mu}g}\partial_{v_j} h\\
   =\frac{1}{4}\Big((v\wedge\partial_v)\cdot(\boldsymbol{B}_{g}\wedge v)h+ (\boldsymbol{B}_{g}\wedge v) \cdot (v\wedge\partial_v)h\Big)-\frac12\sum_{1\leq i,j\leq3}
\inner{\partial_{v_i }(M_{i,j,g}v_j h)+v_iM_{i,j,g}\partial_{v_j}h}=L_2(g,h)
\end{multline*}
Moreover, we  replace    the differential operator $\partial_{v_i}$ and the function  $\partial_{v_j}h$  in \eqref{l1est}  by $v_i$ and $v_jh$, respectively, to get
\begin{eqnarray*}
	\frac{1}{4}\sum_{1\leq i,j\leq3}v_i\bar a_{i,j}\inner{\sqrt{\mu}g}v_j h=\frac{1}{4}\sum_{1\leq i,j\leq3}v_i M_{i,j,g}v_j  h=L_{3}(g,h),
\end{eqnarray*}
the last equalities using again \eqref{defl}.  Combining the above   equalities with \eqref{l1est} we complete the proof of Lemma \ref{Non-1}.
\end{proof}

\begin{lemma}\label{Non-2} Let $L_{j}(g,h), 4\leq j\leq 6,$ be the bilinear operators defined in Proposition \ref{proprep}.
It holds that
\begin{eqnarray*}
\begin{aligned}
L_4(g,h)
&=-\sum_{1\leq i,j\leq3}\partial_{v_i}\Big(\bar a_{i,j}\big(\sqrt{\mu}\partial_{v_j}g\big)h\Big),
\\
L_5(g,h)
&=\frac{1}{2} \sum_{1\leq i,j\leq3}\partial_{v_i}\Big(\bar a_{i,j}\big(\sqrt{\mu}v_j g\big)h\Big)
+\frac{1}{2}\sum_{1\leq i,j\leq3}v_i\bar a_{i,j}\inner{\sqrt{\mu}\partial_{v_j}g}h,
\\
L_6(g,h)
&=-\frac{1}{4}\sum_{1\leq i,j\leq3}v_i\bar a_{i,j}\inner{\sqrt{\mu}v_j g} h.
\end{aligned}
\end{eqnarray*}
\end{lemma}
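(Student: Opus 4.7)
The plan is to proceed in direct analogy with the proof of Lemma \ref{Non-1}, except that the derivative $\partial_{v_j}$ now falls on $g$ (inside the convolution) rather than on $h$. The strategy is to start from the right-hand side of each identity, expand $\bar a_{i,j}$ via the explicit form \eqref{aiij} of $a_{i,j}$, split the sum into diagonal ($i=j$) and off-diagonal ($i\neq j$) parts, use the binomial expansion $(v-v_*)_k = v_k - (v_*)_k$ on the polynomial factors, and then pattern-match the resulting integrals against the definitions of $\boldsymbol{A}_g,\boldsymbol{B}_g,\rho_{i,j,g},\lambda_{i,j,g}$ in \eqref{ag}--\eqref{M}.

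For $L_4$, I would expand
\begin{equation*}
-\sum_{1\leq i,j\leq 3}\partial_{v_i}\Big(\bar a_{i,j}(\sqrt{\mu}\partial_{v_j}g)h\Big)
=-\sum_{i}\partial_{v_i}\!\!\sum_{k\neq i}\!\int |v-v_*|^\gamma (v-v_*)_k^2(\sqrt{\mu}\partial_{v_i}g)(v_*)dv_*\,h
+\sum_{i\neq j}\partial_{v_i}\!\int |v-v_*|^\gamma (v-v_*)_i(v-v_*)_j(\sqrt{\mu}\partial_{v_j}g)(v_*)dv_*\,h,
\end{equation*}
then expand each squared/product factor. The pure $v$-factors combine with $\int|v-v_*|^\gamma\sqrt{\mu}\partial_{v_j}g\,dv_*$ to produce $\boldsymbol{A}_g$, and the two terms
\[
v_k^2\partial_{v_i}\big(a_{i,g}\big)h \;-\; v_iv_j\partial_{v_i}\big(a_{j,g}\big)h \quad (\text{summed over suitable indices})
\]
antisymmetrize into $\tfrac12(v\wedge\partial_v)\cdot(\boldsymbol{A}_g\wedge v)h$ by the same cross-product identity used in Lemma \ref{Non-1}. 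The remaining mixed terms, which carry one factor of $v$ and one factor of $v_*$, regroup precisely into $-\sum_{i\neq j}\partial_{v_i}(\rho_{i,j,g}h)$ according to the definition of $\rho_{i,j,g}$ in \eqref{M}.

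For $L_5$ and $L_6$, I would exploit the same substitution trick employed at the end of the proof of Lemma \ref{Non-1}: replacing $\partial_{v_j}g(v_*)$ by $v_jg(v_*)$ in the above integrand turns occurrences of $\boldsymbol{A}_g$ into $\boldsymbol{B}_g$ and occurrences of $\rho_{i,j,g}$ into $\lambda_{i,j,g}$, because this substitution precisely converts the defining integrals in \eqref{ag}--\eqref{M}. Applying this substitution to the two half-factors needed (either on the derivative outside or on $h$, mirroring the passages from $L_1$ to $L_2$ and $L_3$ in Lemma \ref{Non-1}) produces the three claimed identities for $L_4, L_5, L_6$ after careful use of $\xi\wedge\zeta=-\zeta\wedge\xi$ and $v\wedge v=0$.

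The main obstacle I anticipate is the bookkeeping of signs and of which terms are antisymmetrized into cross products versus which assemble into the $\rho_{i,j,g}$ and $\lambda_{i,j,g}$ quantities. In particular, some terms of the form $v_j\partial_{v_i}(b_{i,g}h)-v_j\partial_{v_i}(b_{j,g}h)$ must be recognized as a component of $(v\wedge\partial_v)\cdot(\boldsymbol{B}_g\wedge v)h$ only after re-indexing, while other terms with one factor of $v_*$ must be kept intact to feed into $\rho_{i,j,g}$ and $\lambda_{i,j,g}$. Beyond this combinatorial bookkeeping, no new analytic ingredient is needed relative to Lemma \ref{Non-1}.
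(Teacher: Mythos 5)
Your proposal takes essentially the same approach as the paper: start from the right-hand side, expand $\bar a_{i,j}$ via \eqref{aiij} into diagonal ($i=j$) and off-diagonal ($i\neq j$) contributions, binomially expand the polynomial factors in $v-v_*$, pull the pure-$v$ pieces out and antisymmetrize them into the cross-product terms with $\boldsymbol{A}_g$, and assemble the mixed $v$/$v_*$ pieces into $\rho_{i,j,g}$; for $L_5,L_6$ the paper indeed relies on the same substitution $\sqrt\mu\,\partial_{v_j}g\mapsto\sqrt\mu\,v_jg$ (together with $\partial_{v_i}\mapsto v_i$), which, as you observe, converts $\boldsymbol{A}_g\to\boldsymbol{B}_g$ and $\rho_{i,j,g}\to\lambda_{i,j,g}$ (a small cancellation of the $(v_*)_i(v_*)_j^2$ terms is needed to see the latter, but it does hold). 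The one small inaccuracy is in your displayed antisymmetrization terms, where $\partial_{v_i}$ should act on the full product including $h$ (e.g. $\partial_{v_i}\bigl(v_j^2a_{i,g}h\bigr)$ rather than $v_k^2\partial_{v_i}(a_{i,g})h$); this is a bookkeeping slip that does not affect the overall argument, which otherwise mirrors the paper's proof.
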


\begin{proof}
 Using the notation in \eqref{rho} as well as \eqref{aiij},   we may write
\begin{eqnarray*}
\begin{aligned}
-\sum_{1\leq i,j\leq3}\partial_{v_i}\Big(\bar a_{i,j}\big(\sqrt{\mu}\partial_{v_j}g\big)h\Big)&=-\sum_{\stackrel{1\leq i,j\leq3}{i\neq j}}\partial_{v_i}\int_{\mathbb R^3}\abs{v-v_*}^{\gamma}\abs{v_j-(v_*)_j}^{2}
(\sqrt\mu\partial_{v_i}g)(v_*)h(v)dv_*
\\&\quad
+\sum_{\stackrel{1\leq i,j\leq3}{i\neq j}}\partial_{v_i}\int_{\mathbb R^3}
\abs{v-v_*}^{\gamma}\inner{v_i-(v_*)_i}\inner{v_j-(v_*)_j}(\sqrt\mu\partial_{v_j}g)(v_*)h(v)dv_*
\\&
 := K_1+K_2,
\end{aligned}
\end{eqnarray*}
and morover
\begin{eqnarray*}
\begin{aligned}
K_1&
=-\sum_{\stackrel{1\leq i,j\leq3}{i\neq j}}\partial_{v_i}\int_{\mathbb R^3}\abs{v-v_*}^{\gamma}
\inner{(v_*)_j(v_*)_j-2v_j(v_*)_j}(\sqrt\mu\partial_{v_i}g)(v_*)h(v) dv_*
\\&\quad
-\sum_{\stackrel{1\leq i,j\leq3}{i\neq j}}v_j\partial_{v_i}\int_{\mathbb R^3}\abs{v-v_*}^{\gamma}v_j
(\sqrt\mu\partial_{v_i}g)(v_*)h(v) dv_*
\end{aligned}
\end{eqnarray*}
and
\begin{eqnarray*}
\begin{aligned}
K_2&
=\sum_{\stackrel{1\leq i,j\leq3}{i\neq j}}\partial_{v_i}\int_{\mathbb R^3}\abs{v-v_*}^{\gamma}
\inner{ (v_*)_i (v_*)_j-v_i(v_*)_j-(v_*)_i v_j}(\sqrt\mu\partial_{v_j}g)(v_*) h(v)dv_*
\\&\quad
+\sum_{\stackrel{1\leq i,j\leq3}{i\neq j}}v_j\partial_{v_i}\int_{\mathbb R^3}\abs{v-v_*}^{\gamma}v_i
(\sqrt\mu\partial_{v_j}g)(v_*) h(v)dv_*.
\end{aligned}
\end{eqnarray*}
Using the nations in \eqref{M} and \eqref{ag} gives
\begin{multline*}
	\sum_{\stackrel{1\leq i,j\leq3}{i\neq j}}\partial_{v_i}\int_{\mathbb R^3}\abs{v-v_*}^{\gamma}
\inner{ (v_*)_i (v_*)_j-v_i(v_*)_j-(v_*)_i v_j}(\sqrt\mu\partial_{v_j}g)(v_*) h(v)dv_*\\
-\sum_{\stackrel{1\leq i,j\leq3}{i\neq j}}\partial_{v_i}\int_{\mathbb R^3}\abs{v-v_*}^{\gamma}
\inner{(v_*)_j(v_*)_j-2v_j(v_*)_j}(\sqrt\mu\partial_{v_i}g)(v_*)h(v) dv_*=-\sum_{\stackrel{1\leq i,j\leq3}{i\neq j}}\partial_{v_i}\big(\rho_{i,j,g} h\big)
\end{multline*}
and
\begin{multline*}
	\sum_{\stackrel{1\leq i,j\leq3}{i\neq j}}v_j\partial_{v_i}\int_{\mathbb R^3}\abs{v-v_*}^{\gamma}v_i
(\sqrt\mu\partial_{v_j}g)(v_*) h(v)dv_*\\ -\sum_{\stackrel{1\leq i,j\leq3}{i\neq j}}v_j\partial_{v_i}\int_{\mathbb R^3}\abs{v-v_*}^{\gamma}v_j
(\sqrt\mu\partial_{v_i}g)(v_*)h(v) dv_*
= \frac{1}{2}\inner{v\wedge\partial_v}\cdot\inner{\boldsymbol{A}_g\wedge v} h.
\end{multline*}
Consequently,  we combine the above equalities to conclude
\begin{eqnarray*}
	-\sum_{1\leq i,j\leq3}\partial_{v_i}\Big(\bar a_{i,j}\big(\sqrt{\mu}\partial_{v_j}g\big)h\Big)=K_1+K_2=\frac{1}{2}\inner{v\wedge\partial_v}\cdot\inner{\boldsymbol{A}_g\wedge v} h
-\sum_{\stackrel{1\leq i,j\leq3}{i\neq j}}\partial_{v_i}\big(\rho_{i,j,g} h\big)=L_4(g,h),
\end{eqnarray*}
the last inequality using the definition of $L_4(g,h)$ given in Proposition \ref{proprep}.
Similarly, we can verify that
\begin{multline*}
\frac{1}{2} \sum_{1\leq i,j\leq3}\partial_{v_i}\Big(\bar a_{i,j}\big(\sqrt{\mu}v_j g\big)h\Big)
+\frac{1}{2}\sum_{1\leq i,j\leq3}v_i\bar a_{i,j}\inner{\sqrt{\mu}\partial_{v_j}g}h\\
=-\frac{1}{4}\inner{v\wedge\partial_v}\cdot\inner{\boldsymbol{B}_g\wedge v} h
+\frac12\sum_{\stackrel{1\leq i,j\leq3}{i\neq j}}\partial_{v_i}\big(\lambda_{i,j,g} h\big)+\frac12\sum_{\stackrel{1\leq i,j\leq3}{i\neq j}} v_i \rho_{i,j,g} h =L_5(g,h)
\end{multline*}
and
\begin{eqnarray*}
-\frac{1}{4}\sum_{1\leq i,j\leq3}v_i\bar a_{i,j}\inner{\sqrt{\mu}v_j g}h=-\frac14\sum_{\stackrel{1\leq i,j\leq3}{i\neq j}} v_i  \lambda_{i,j,g} h =L_6(g,h),
\end{eqnarray*}
with $L_5(g,h), L_6(g,h)$ defined in Proposition \ref{proprep}. The proof of Lemma \ref{Non-2} is completed.
\end{proof}

\begin{proof}
	[Proof of Proposition \ref{proprep}]
	Recalling $\Gamma(g,h)$ is defined by \eqref{gm} and in view of  the representation \eqref{colli},  we write
\begin{align*}
 \Gamma(g, h) &=\mu^{-\frac 12 }\sum_{1\leq i,j\leq3}\partial_{v_i} \int_{\mathbb R^3}a_{i,j}(v-v_*)\big[(\sqrt{\mu} g)(v_*)\partial_{v_j}
(\sqrt{\mu} h)(v)-(\sqrt{\mu} h)(v)\big(\partial_{v_j}(\sqrt{\mu} g)\big)(v_*)\big]dv_*\\
&=\mu^{-1/2}\sum_{1\leq i,j\leq3}\partial_{v_i}\int_{\mathbb R^3} a_{i,j}(v-v_*)
(\sqrt{\mu}g)(v_*) \sqrt{\mu(v)}\inner{\partial_{v_j}h(v)-\frac{v_j}{2}h(v)}dv_*
\\&\quad
-\mu^{-1/2} \sum_{1\leq i,j\leq
3}\partial_{v_i}\int_{\mathbb R^3} a_{i,j}(v-v_*)(\sqrt{\mu}h)(v)\Big[
\inner{\sqrt{\mu}\partial_{v_j}g}(v_*)-\inner{v_j\sqrt{\mu}g/2}(v_*)\Big] dv_*. 	
\end{align*}
 Then using the notation in \eqref{rho}
we can split the terms on the right hand side as below.
\begin{equation*}\label{reoflan}
\begin{aligned}
\Gamma(g,h)
=&\sum_{1\leq i,j\leq3}\partial_{v_i}\Big(\bar a_{i,j}\inner{\sqrt{\mu}g}\partial_{v_j} h\Big)
-\frac{1}{2}\sum_{1\leq i,j\leq3}\partial_{v_i}\Big(\bar a_{i,j}\inner{\sqrt{\mu}g}v_j h\Big)
\\&
-\frac{1}{2}\sum_{1\leq i,j\leq3}v_i\bar a_{i,j}\inner{\sqrt{\mu}g}\partial_{v_j} h
+\frac{1}{4}\sum_{1\leq i,j\leq3}v_i\bar a_{i,j}\inner{\sqrt{\mu}g}v_j h
\\&
-\sum_{1\leq i,j\leq3}\partial_{v_i}\Big(\bar a_{i,j}\big(\sqrt{\mu}\partial_{v_j}g\big) h\Big)
+\frac{1}{2}\sum_{1\leq i,j\leq3}\partial_{v_i}\Big(\bar a_{i,j}\big(\sqrt{\mu}v_j g\big) h\Big)\\&
+\frac{1}{2}\sum_{1\leq i,j\leq3}v_i\bar a_{i,j}\big(\sqrt{\mu}\partial_{v_j}g\big) h
-\frac{1}{4} \sum_{1\leq i,j\leq3}v_i\bar a_{i,j}\big(\sqrt{\mu}v_j g\big) h,
\end{aligned}
\end{equation*}
which with   Lemmas \ref{Non-1} and \ref{Non-2} yields the assertion in Proposition \ref{proprep}. The proof is thus completed.
 \end{proof}

\subsection{Proof of Proposition \ref{prop: tricoe}: trilinear and coercivity estimates}
For simplicity of notations, we denote by  $C$  a generic constant.     We first derive the trilinear estimate \eqref{trili}, and   it suffices to prove that, in view of Proposition \ref{proprep},
\begin{equation}\label{uptriestm}
  \sum_{i=1}^{i=6}\big|\inner{   L_i(g,h),\  \omega }_{L_v^2}\big| \leq C\norm{g}_{L_v^2}\norm{\psi(v,D_v)h}_{L_v^2}\norm{\psi(v,D_v)\omega}_{L_v^2},
\end{equation}
where $L_j(g,h), 1\leq j\leq 6,$ are defined in Proposition \ref{proprep}.   By the representation  of  $L_1$  in \eqref{defl}, it follows     that
\begin{eqnarray*}
\begin{aligned}
&\big|\inner{   L_1(g,h),\  \omega }_{L_v^2}\big|=\Big|\int_{\mathbb R_v^3}   L_1(g,h)  \omega dv\Big| \\
&\leq \frac{1}{2}\norm{\comi v^{-{\gamma\over 2}} a_g(v)\inner{v\wedge\partial_v}h}_{L_v^2} \norm{\comi v^{\gamma\over 2}\inner{v\wedge\partial_v} \omega }_{L_v^2}  +\sum_{1\leq i,j\leq3}\norm{\comi v^{-{\gamma\over 2}}  M_{i,j,g}(v) \partial_{v_j}h}_{L_v^2} \norm{\comi v^{\gamma\over2} \partial_{v_i}\omega }_{L_v^2}
\\&\quad
+\frac{1}{2}  \norm{\comi v^{ \gamma\over2} \inner{v\wedge\partial_v}h}_{L_v^2}  \norm{\comi v^{-{\gamma\over 2}} \big({\boldsymbol{B}_g}(v)\wedge \partial_v\big)\omega}_{L_v^2}
 + \frac{1}{2}  \norm{\comi v^{ \gamma\over2} \inner{v\wedge\partial_v}\omega}_{L_v^2}  \norm{\comi v^{-{\gamma\over 2}} \big({\boldsymbol{B}_g}(v)\wedge \partial_v\big)h}_{L_v^2} .
\end{aligned}
\end{eqnarray*}
In view of \eqref{ag}-\eqref{M}, we have
\begin{eqnarray*}
	\forall\    v\in\mathbb R^3,\quad |a_g(v)|+\sum_{1\leq i,j\leq 3}|M_{i,j,g}(v)|+|{\boldsymbol{B}_g}(v)|\leq C \comi v^\gamma \norm{g}_{L_v^2}.
\end{eqnarray*}
This with \eqref{ma+0} yields
\begin{equation}\label{bouforl1}
\big|\inner{   L_1(g,h),\  \omega }_{L_v^2}\big|   \leq 	C \norm{g}_{L_v^2}\norm{\psi(v,D_v)h}_{L_v^2}\norm{\psi(v,D_v)\omega}_{L_v^2}.
\end{equation}
In view the representations of $L_i(g,h),1\leq i\leq 6,$ in Proposition \ref{proprep},   the above estimate \eqref{bouforl1} still holds true with $L_1(g,h)$ replaced by $L_i(g,h)$ with $i=2,3$ or $6$.

To complete the proof of  \eqref{uptriestm},
it remains to estimate $\big|\inner{   L_i(g,h),\  \omega }_{L_v^2}\big|$ with $i=4$ or $5.$
To do so we combine \eqref{bouforl1} with Lemma \ref{Non-1} to conclude
\begin{eqnarray*}
	\Big| \sum_{1\leq i,j\leq3}\inner{ \partial_{v_i}\big (\bar a_{i,j}\inner{\sqrt{\mu}g}\partial_{v_j}h\big ),\  \omega }_{L_v^2}\Big|	 =\big|\inner{   L_1(g,h),\  \omega }_{L_v^2}\big| \leq 	C \norm{g}_{L_v^2}\norm{\psi(v,D_v)h}_{L_v^2}\norm{\psi(v,D_v)\omega}_{L_v^2}.
\end{eqnarray*}
Similarly, replacing $\sqrt \mu  g$ and $\partial_{v_j}h$  above by   $v_j\sqrt \mu  g$ and $ h$, respectively,
\begin{equation}\label{omete}
	\Big| \sum_{1\leq i,j\leq3}\inner{ \partial_{v_i}\big (\bar a_{i,j}\big(v_j\sqrt{\mu}g\big)  h\big ),\  \omega }_{L_v^2}\Big|	 \leq 	C \norm{g}_{L_v^2}\norm{\psi(v,D_v)h}_{L_v^2}\norm{\psi(v,D_v)\omega}_{L_v^2}.
\end{equation}
By Lemma \ref{Non-2},
\begin{eqnarray*}
\big|\inner{   L_4(g,h),\  \omega }_{L_v^2}\big|= \Big|\sum_{1\leq i,j\leq3}\inner{\partial_{v_i}\big(\bar a_{i,j}\big(\sqrt{\mu}\partial_{v_j}g\big)h\big), \ \omega}_{L_v^2}\Big|.
\end{eqnarray*}
This with the fact that,  recalling the notation in \eqref{rho},
\begin{align*}
	\bar a_{i,j}\big(\sqrt{\mu}\partial_{v_j}g\big) = a_{i,j} * \Big( \partial_{v_j}(\sqrt{\mu}g) -g \partial_{v_j} \sqrt{\mu} \Big)=\big(\partial_{v_j}a_{i,j}\big)* \big(\sqrt{\mu}g\big)+\frac12 a_{i,j}* \big( v_j \sqrt{\mu} g\big),
\end{align*}
yields
\begin{align*}
	&\big|\inner{   L_4(g,h),\  \omega }_{L_v^2}\big| \leq \sum_{1\leq i,j\leq3}\Big|\inner{\partial_{v_i}\big(\overline{\partial_{v_j} a_{i,j}}\big(\sqrt{\mu}g\big)h\big), \ \omega}_{L_v^2}\Big|+\frac12 \Big|\sum_{1\leq i,j\leq3}\inner{\partial_{v_i}\big(\bar a_{i,j}\big(v_j\sqrt{\mu}  g\big)h\big), \ \omega}_{L_v^2}\Big|\\
	&\leq \sum_{1\leq i,j\leq3} \norm{\comi v^{-{\gamma\over2}}  \overline{\partial_{v_j} a_{i,j}}\big(\sqrt{\mu}g\big)h}_{L_v^2}\norm{\comi v^{\gamma\over2}\partial_{v_i}\omega}_{L_v^2} +\frac12\Big|\sum_{1\leq i,j\leq3}\inner{\partial_{v_i}\big(\bar a_{i,j}\big(v_j\sqrt{\mu}  g\big)h\big), \ \omega}_{L_v^2}\Big|\\
	&  \leq 	C \norm{g}_{L_v^2}\norm{\psi(v,D_v)h}_{L_v^2}\norm{\psi(v,D_v)\omega}_{L_v^2},
\end{align*}
 the last inequality using \eqref{omete} and the fact that
 \begin{eqnarray*}
 	\forall\  v\in\mathbb R^3, \quad \big | \overline{\partial_{v_j} a_{i,j}}\big(\sqrt{\mu}g\big)\big |=\Big|\int_{\mathbb R^3} \big(\partial_{v_j} a_{i,j}\big) (v-v_*) \sqrt\mu (v_*) g (v_*)dv_*\Big|\leq C\comi v^{1+\gamma} \norm{g}_{L_v^2}
 \end{eqnarray*}
due to \eqref{coe}.   Similarly,   in view of Lemma \ref{Non-2},
\begin{align*}
	 \big|\inner{   L_5(g,h),\  \omega }_{L_v^2}\big| &=  \frac12\Big|\sum_{1\leq i,j\leq3}\inner{\partial_{v_i}\big(\bar a_{i,j}\big(v_j\sqrt{\mu}  g\big)h\big), \ \omega}_{L_v^2}+\sum_{1\leq i,j\leq3}\inner{ v_i \big( \bar a_{i,j}\big(\sqrt{\mu}\partial_{v_j}g\big)h\big), \ \omega}_{L_v^2}\Big|\\
	&  \leq 	C \norm{g}_{L_v^2}\norm{\psi(v,D_v)h}_{L_v^2}\norm{\psi(v,D_v)\omega}_{L_v^2}.
\end{align*}
 Combining the above estimates on $ \inner{   L_i(g,h),\  \omega }_{L_v^2}, 1\leq i\leq 6$, we   obtain \eqref{uptriestm} and thus  the trilinear estimate \eqref{trili} in Proposition \ref{prop: tricoe}.

 The rest part is devoted to proving the coercivity estimate \eqref{coer} in Proposition \ref{prop: tricoe}.  Recall $\mathcal Lh=-\Gamma(\sqrt \mu,h)-\Gamma(h, \sqrt \mu)$ in view of \eqref{gm}.  Using the trilinear estimate \eqref{trili} yields, for any $\delta>0,$
\begin{equation}\label{tri-non-2}
\begin{split}
\Big|\inner{ \Gamma(h, \sqrt \mu),\ h }_{L_v^2}\Big|
 \leq C\norm{h}_{L_v^2}\norm{\psi(v,D_v)h}_{L_v^2}
 \leq \delta\norm{\psi(v,D_v)h}_{L_v^2}^2+C_{\delta}\norm{h}^2_{L_v^2}.
\end{split}
\end{equation}
By Proposition \ref{proprep} we can write  \begin{equation*}\label{musp}
 - \Gamma(\sqrt\mu, h)=- \sum_{1\leq j\leq 6}L_j(\sqrt\mu, h)
\end{equation*}
with $L_j(\sqrt\mu, h)$ given in Proposition \ref{proprep}. Next we will proceed to derive the lower or upper bounds of $-\inner{L_i(\sqrt \mu, h),\ h}_{L_v^2}$  with $1\leq i\leq 6.$

Observe $v\wedge\partial_v$ is anti-selfadjoint on $L_v^2$. Then we use integration  by parts to obtain
\begin{multline*}
	 \frac{1}{2}\inner{\big (\partial_v\wedge\boldsymbol{B}_g\big )
\cdot \inner{v\wedge \partial_v}h- (v\wedge\partial_v)\cdot(\boldsymbol{B}_{g}\wedge\partial_v)h,\ h}_{L^2}\\
=-\frac{1}{2}\inner{
  \inner{v\wedge \partial_v}h,\ (\boldsymbol{B}_g\wedge\partial_v \big )  h}_{L^2}+\frac{1}{2}\inner{
 (\boldsymbol{B}_{g}\wedge\partial_v)h,\ (v\wedge\partial_v)  h}_{L^2}=0,
 \end{multline*}
which, with
  the representation of $L_1(\sqrt\mu, h)$ in  \eqref{defl}, yields
\begin{align*}
	-\inner{L_1(\sqrt \mu, h),\ h}_{L_v^2}&= \frac{1}{2} \int_{\mathbb R_v^3} \Big( a_{\sqrt\mu}(v)\inner{v\wedge\partial_v}h\Big)\cdot\inner{v\wedge \partial_v} h \,dv
+\sum_{1 \leq i,j\leq 3}\int_{\mathbb R_v^3} M_{i,j,\sqrt\mu}(v)\big( \partial_{v_j} h\big)\partial_{v_i}h \,dv.
\end{align*}
Here  $a_{\sqrt\mu} $ and  $M_{i,j,\sqrt\mu}$ are defined in \eqref{ag} and \eqref{M} which satisfy  that
 \begin{equation*}
a_{\sqrt\mu}(v)=\int_{\mathbb R^3}\abs{v-v_*}^{\gamma}\mu(v_*)dv_*\geq  \comi v^\gamma /C
\end{equation*}
and that
\begin{equation}\label{defmatr}
  \sum_{1\leq i,j\leq 3}M_{i,j,\sqrt\mu}(v)\zeta_i\zeta_j = \sum_{1\leq i,j\leq 3}\int_{\mathbb R^3}\abs {v-v_*}^\gamma  \big (\delta_{i,j}\abs{v_*}^2-(v_*)_i(v_*)_j\big ) \mu(v_*)dv_*
\geq  \comi v^{\gamma}\abs{\zeta}^2/C.	
\end{equation}
for any  $\zeta=(\zeta_1,\zeta_2,\zeta_3)\in\mathbb R^3$,
since the matrix $\big(\delta_{i,j}\abs{v_*}^2-(v_*)_i(v_*)_j\big)_{3\times 3}$ is positive-defined.   As a result, combining the above estimate yields
\begin{equation}
	\label{lol1}
	-\inner{L_1(\sqrt \mu, h),\ h}_{L_v^2}\geq  C^{-1}\inner{\norm{\comi v^{\gamma/2}(v\wedge\partial_v)h}_{L_v^2}^2+\norm{\comi v^{\gamma/2} \partial_v h}_{L_v^2}^2}.
\end{equation}
Direct verification shows
\begin{multline*}
	 \frac{1}{4}\inner{\inner{v\wedge \partial_v}\cdot\big ( \boldsymbol{B}_g\wedge v\big )
  h+  \big ( \boldsymbol{B}_g\wedge v\big )\cdot \inner{v\wedge \partial_v} h,\ h}_{L^2}\\
=-\frac{1}{4}\inner{ (\boldsymbol{B}_g\wedge v \big )
h,\     \inner{v\wedge \partial_v}h}_{L^2}+\frac{1}{4}\inner{
 \inner{v\wedge \partial_v}h,\  (\boldsymbol{B}_g\wedge v \big )  h}_{L^2}=0.
 \end{multline*}
 This implies, recalling $L_2(\sqrt\mu,h)$ is given in Proposition \ref{proprep} and the matrx $M_{i,j,\sqrt\mu}$ in \eqref{M} is  symmetric,
 \begin{multline}\label{upl2}
 	-\inner{L_2(\sqrt \mu, h),\ h}_{L_v^2} = \frac{1}{2}\sum_{1\leq i,j\leq3}\Big(
\Big[\partial_{v_i }(M_{i,j,\sqrt\mu}v_j h)+v_iM_{i,j,\sqrt\mu}\partial_{v_j}h\Big] ,  \  h \Big)_{L_v^2}\\
 = -\frac{1}{2}\sum_{1\leq i,j\leq3}\Big(
 M_{i,j,\sqrt\mu}v_j h ,  \   \partial_{v_i } h \Big)_{L_v^2}+\frac{1}{2}\sum_{1\leq i,j\leq3}\Big(
\partial_{v_j}h,  \  M_{i,j,\sqrt\mu}v_i  h \Big)_{L_v^2}=0.
 \end{multline}
 By the definitions of  $\lambda_{i,j,\sqrt\mu}$  and $M_{i,j,\sqrt\mu}$ in \eqref{M},
\begin{align*}
\sum_{\stackrel{1\leq i,j\leq3}{i\neq j}} v_i \lambda_{i,j,\sqrt\mu}(v)&=\sum_{\stackrel{1\leq i,j\leq3}{i\neq j}}  \int_{\mathbb R^3}\abs{v-v_*}^{\gamma} \mu(v_*)  \inner{v_i^2(v_*)_j(v_*)_j -v_iv_j(v_*)_i (v_*)_j }dv_*\\
&=\sum_{ 1\leq i,j \leq3}  \int_{\mathbb R^3} \abs{v-v_*}^{\gamma} \mu(v_*)    \inner{\delta_{i,j} |v_*|^2  - (v_*)_i (v_*)_j }  v_i  v_j  dv_*=\sum_{ 1\leq i,j \leq3}M_{i,j,\sqrt\mu} v_iv_j,
\end{align*}
the second equality using a similar fact as in \eqref{aiij}.  As a result,
\begin{equation}\label{posterm}
	\sum_{\stackrel{1\leq i,j\leq3}{i\neq j}}\inner{v_i \lambda_{i,j,\sqrt\mu} h, \ h}_{L_v^2}=\sum_{ 1\leq i,j \leq3}\inner{v_i M_{i,j,\sqrt\mu} v_j h, \ h}_{L_v^2},
\end{equation}
which with Proposition \ref{proprep} yields
\begin{multline}\label{l3l6est}
		-\inner{L_3(\sqrt \mu, h),\ h}_{L_v^2}- \inner{L_6(\sqrt \mu, h),\ h}_{L_v^2}\\
		=-\frac14\sum_{ 1\leq i,j \leq3}\inner{v_i M_{i,j,\sqrt\mu} v_j h, \ h}_{L_v^2}+\frac14	\sum_{\stackrel{1\leq i,j\leq3}{i\neq j}}\inner{v_i \lambda_{i,j,\sqrt\mu} h, \ h}_{L_v^2}=0.
\end{multline}
In view of  \eqref{ag} and \eqref{M},   we can verify that  \begin{eqnarray*}
\forall\ v\in\mathbb R^3,\quad \boldsymbol{A}_{\sqrt\mu }(v)=-\frac{1}{2} \boldsymbol{B}_{\sqrt\mu }(v) \  \textrm{ and }\ 	\rho_{i,j,\sqrt\mu }(v)=-\frac12  \lambda_{i,j,\sqrt\mu}(v),
\end{eqnarray*}
and thus, in view of the definitions of $L_4(\sqrt\mu, h)$ and $L_5(\sqrt\mu, h)$   in Proposition \ref{proprep},
\begin{multline}\label{l45}
-\inner{L_4(\sqrt \mu, h),\ h}_{L_v^2}- \inner{L_5(\sqrt \mu, h),\ h}_{L_v^2}\\
		 = \frac12 \inner{\inner{v\wedge \partial_v}\cdot\inner{\boldsymbol{B}_{\sqrt{\mu}}\wedge v} h, \ h}_{L_v^2}-	\sum_{\stackrel{1\leq i,j\leq3}{i\neq j}}\inner{\partial_{v_i} \big(\lambda_{i,j,\sqrt{\mu}} h\big), \ h}_{L_v^2}+\frac14	\sum_{\stackrel{1\leq i,j\leq3}{i\neq j}}\inner{v_i \lambda_{i,j,\sqrt{\mu}} h, \ h}_{L_v^2}.
\end{multline}
As for the last term on the right side of \eqref{l45}, we use \eqref{posterm} and  \eqref{defmatr} to conclude
\begin{equation}\label{lowlam}
	\frac14	\sum_{\stackrel{1\leq i,j\leq3}{i\neq j}}\inner{v_i \lambda_{i,j,\sqrt{\mu}} h, \ h}_{L_v^2}\geq C^{-1} \int_{\mathbb R^3}\comi v^\gamma \abs{v}^2h(v)^2dv.
\end{equation}
Integrating by parts and using the fact that
\begin{eqnarray*}
 \partial_{v_i}\lambda_{i,j,\sqrt{\mu}} = \partial_{v_i}	\Big[v_i \big(\abs v^\gamma* (\mu v_j^2)\big)-v_j \big(\abs v^\gamma* (\mu v_iv_j )\big) \Big]   \geq  -C\comi v^{1+\gamma}
\end{eqnarray*}
in view of \eqref{M}, we have,
 for any $\delta>0,$
\begin{equation}\label{lowbounla}
	 -\sum_{\stackrel{1\leq i,j\leq3}{i\neq j}}\inner{\partial_{v_i} \big(\lambda_{i,j,\sqrt{\mu}} h\big), \ h}_{L_v^2}=\frac12 \sum_{\stackrel{1\leq i,j\leq3}{i\neq j}}\inner{ \big(\partial_{v_i}\lambda_{i,j,\sqrt{\mu}}  \big)h, \ h}_{L_v^2}
	 \geq  -\delta  \norm{\comi v^{1+{\gamma\over2}}h}_{L_v^2}^2- C_\delta \norm{ h}_{L_v^2}^2.
	\end{equation}
Finally for the first term on the right hand side of \eqref{l45},  observe  $\boldsymbol{B}_{\sqrt{\mu}}=-\partial_v a$  with $a=a_{\sqrt\mu}$, and thus, writing $a$ instead of $a_{\sqrt\mu}$ for simplicity of notations,
\begin{align*}
& \frac12  \inner{v\wedge \partial_v}\cdot\big(\boldsymbol{B}_{\sqrt{\mu}}\wedge v\big) h =  -\frac12 \sum_{\stackrel{1\leq i,j\leq 3}{i\neq j}} \Big( v_i\partial_{v_j} -v_j\partial_{v_i}\Big)\inner{\big(\partial_{v_i}a\big)v_j h-\big(\partial_{v_j}a\big)v_i h  }  \\&
=-\frac12 \sum_{\stackrel{1\leq i,j\leq 3}{i\neq j}} \inner{v_i\big(\partial_{v_i}\partial_{v_j}a\big)v_j h+v_i\big(\partial_{v_i} a\big) h+v_i\big(\partial_{v_i} a\big)v_j\partial_{v_j} h-v_i\big(\partial_{v_j}^2a\big)v_i h-v_i\big(\partial_{v_j}a\big)v_i \partial_{v_j}h  } \\
&\quad +\frac12\sum_{\stackrel{1\leq i,j\leq 3}{i\neq j}} \inner{v_j\big(\partial_{v_i}^2a\big)v_j h+v_j\big(\partial_{v_i}a\big)v_j \partial_{v_i}h-v_j\big(\partial_{v_i}\partial_{v_j}a\big)v_i h-v_j\big(\partial_{v_j}a\big)  h-v_j\big(\partial_{v_j}a\big)v_i\partial_{v_i} h  }\\
&=\sum_{\stackrel{1\leq i,j\leq 3}{i\neq j}}\Big( \big(\partial_{v_i}^2a\big)v_j ^2h+ \big(\partial_{v_i}a\big)v_j^2 \partial_{v_i}h - \big(\partial_{v_i}\partial_{v_j}a\big)v_i v_jh- \big(\partial_{v_j}a\big) v_j h -\big(\partial_{v_j}a\big)v_iv_j\partial_{v_i} h  \Big).
\end{align*}
This, with the fact that
\begin{eqnarray*}
	\inner{\big(\partial_{v_i}a\big)v_j^2 \partial_{v_i}h,\ h}_{L_v^2}=-\frac{1}{2}\inner{\big(\partial_{v_i}^2a\big)v_j^2  h,\ h}_{L_v^2} \end{eqnarray*}
and
\begin{eqnarray*}
	  \inner{-\big(\partial_{v_j}a\big)v_iv_j\partial_{v_i} h,\ h}_{L_v^2}= \frac12\inner{ \big(\partial_{v_i} \partial_{v_j}a\big)v_iv_jh,\ h}_{L_v^2}+\frac12\inner{ \big( \partial_{v_j}a\big) v_jh,\ h}_{L_v^2}
\end{eqnarray*}
for any $i\neq j$, yields
\begin{multline}\label{laetimate}
\frac12 \inner{ \inner{v\wedge \partial_v}\cdot\big(\boldsymbol{B}_{\sqrt{\mu}}\wedge v\big) h,\ h}_{L_v^2} =\frac12 \sum_{\stackrel{1\leq i,j\leq 3}{i\neq j}}\inner{\big(\partial_{v_i}^2a\big)v_j ^2h-\big(\partial_{v_i}\partial_{v_j}a\big)v_i v_jh-\big(\partial_{v_j}a\big) v_j h ,\ h}_{L_v^2}\\
 \geq  \frac12 \sum_{\stackrel{1\leq i,j\leq 3}{i\neq j}}\inner{ -\big(\partial_{v_j}a\big) v_j h ,\ h}_{L_v^2}\geq -C\norm{\comi v^{(1+\gamma)/2}h}_{L_v^2}^2\geq -\delta  \norm{\comi v^{1+{\gamma\over2}}h}_{L_v^2}^2- C_\delta \norm{ h}_{L_v^2}^2,
\end{multline}
where the first inequality in the last line  holds true because it follows from \eqref{defmatr} that, recalling $a=a_{\sqrt\mu}$ is defined in \eqref{ag},
\begin{align*}
	\sum_{\stackrel{1\leq i,j\leq 3}{i\neq j}}\Big(\big(\partial_{v_i}^2a\big)v_j ^2 -\big(\partial_{v_i}\partial_{v_j}a\big)v_i v_j \Big)&=\sum_{\stackrel{1\leq i,j\leq 3}{i\neq j}}\int_{\mathbb R^3}\abs{v-v_*}^\gamma\mu(v_*)\Big[ (v_*)_i(v_*)_i v_j^2  -  (v_*)_i(v_*)_j v_iv_j \Big]   dv_*  \\
	&=\sum_{ 1\leq i,j\leq 3 }\int_{\mathbb R^3}\abs{v-v_*}^\gamma\mu(v_*)\Big[\delta_{i,j}|v_*|^2  -  (v_*)_i(v_*)_j\Big]  v_iv_j   dv_* \geq 0.
	\end{align*}
Now we substitute \eqref{lowlam}, \eqref{lowbounla} and \eqref{laetimate} into \eqref{l45} to conclude
\begin{eqnarray*}
 -\inner{L_4(\sqrt \mu, h),\ h}_{L_v^2}- \inner{L_5(\sqrt \mu, h),\ h}_{L_v^2}\geq  C^{-1} \int_{\mathbb R^3}\comi v^\gamma \abs{v}^2h(v)^2dv-\delta  \norm{\comi v^{1+{\gamma\over2}}h}_{L_v^2}^2- C_\delta \norm{ h}_{L_v^2}^2
\end{eqnarray*}
for any $\delta>0$, that is,
\begin{eqnarray*}
 -\inner{L_4(\sqrt \mu, h),\ h}_{L_v^2}- \inner{L_5(\sqrt \mu, h),\ h}_{L_v^2}\geq  C^{-1} \norm{\comi v^{1+{\gamma\over2}}h}_{L_v^2}^2- C  \norm{ h}_{L_v^2}^2.
\end{eqnarray*}
This, with \eqref{lol1}, \eqref{upl2} and \eqref{l3l6est} as well as \eqref{ma+0},  implies
\begin{eqnarray*}
	-\inner{\Gamma(\sqrt\mu, h),\ h}_{L_v^2}\geq  C^{-1} \norm{\psi(v,D_v)h}_{L_v^2}^2- C  \norm{ h}_{L_v^2}^2.
\end{eqnarray*}
As a result, the coercivity estimate \eqref{coer} follows by   combining the above estimate with \eqref{tri-non-2} and observing $\mathcal Lh=-\Gamma(\sqrt \mu,h)-\Gamma(h, \sqrt \mu)$.  The proof of Proposition \ref{prop: tricoe} is completed.

\section{Estimate on commutators}

 This part is devoted to treating the commutators between $M^k$ and $ \Gamma(f,f)$,  and completing the proof of Proposition \ref{prop:com}.

\subsection{Quantitative properties of the  time-average operator}

Let $M$ be the  time-average operator defined by \eqref{timave}, which is a Fourier multiplier with symbol
\begin{eqnarray*}
	(t-t_0)\eta_1^2+(t-t_0)^2m_1\eta_1+\frac{(t-t_0)^3}{3}m_1^2,
\end{eqnarray*}
recalling $m=(m_1,m_2,m_3)\in\mathbb Z^3$ and $\eta=(\eta_1,\eta_2,\eta_3)\in\mathbb R^3$ are the Fourier dual variables of $x\in\mathbb T^3$ and $v\in\mathbb R^3$, respectively.

Direct verification yields, for any $t>t_0,$
\begin{equation}\label{elliptic}
 c_0\Big( (t-t_0)\eta_1^2+ (t-t_0)^3m_1^2\Big)\leq  (t-t_0)\eta_1^2+ (t-t_0)^2m_1\eta_1+\frac{ (t-t_0)^3}{3}m_1^2\leq  \Big( (t-t_0)\eta_1^2+ (t-t_0)^3m_1^2\Big)/c_0
\end{equation}
for some constant $0<c_0<1$. This enables to define the fractional power $M^\sigma$ by setting
\begin{equation}\label{FoMu}
\forall\ \sigma\geq 0	,\quad 	\widehat {M^\sigma  g}(m,\eta)=\rho_\sigma\hat g(m,\eta)
\end{equation}
with Fourier symbol
\begin{equation}\label{rhsigma}
\rho_\sigma:=	\Big( (t-t_0)\eta_1^2+ (t-t_0)^2m_1\eta_1+\frac{ (t-t_0)^3}{3}m_1^2\Big)^\sigma, \quad \sigma\geq 0, \ t\in [t_0,1].
\end{equation}

 \begin{lemma}[symbolic calculus]\label{lem:syM}
For any given integer $k\geq 0$ we have
 \begin{equation}\label{sycal}
\forall\ t\in[t_0,1],\  \forall \   j\leq 2k,\quad |\partial_{\eta_1}^j \rho_k|\leq 8^j \frac{(2k)!}{(2k-j)!} \rho_{k-\frac{j}{2}},
\end{equation}
where $\rho_k$ is defined by \eqref{rhsigma}.
\end{lemma}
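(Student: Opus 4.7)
The plan is to set $\tau := t-t_0 \in [0,1]$ and
\begin{equation*}
p := \tau\eta_1^2 + \tau^2 m_1\eta_1 + \tfrac{\tau^3}{3}m_1^2,
\end{equation*}
so that $\rho_\sigma = p^\sigma$, and to exploit two elementary facts about $p$. Completing the square yields
\begin{equation*}
p = \tau\bigl(\eta_1 + \tfrac{\tau m_1}{2}\bigr)^2 + \tfrac{\tau^3 m_1^2}{12} \geq \tau\bigl(\eta_1 + \tfrac{\tau m_1}{2}\bigr)^2,
\end{equation*}
which forces $|\partial_{\eta_1} p| = 2\tau\bigl|\eta_1 + \tfrac{\tau m_1}{2}\bigr| \leq 2\sqrt{\tau}\,\sqrt{p}$, while the higher derivatives are almost trivial: $\partial_{\eta_1}^2 p = 2\tau$ and $\partial_{\eta_1}^n p = 0$ for every $n\geq 3$.

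Because only the first two $\eta_1$-derivatives of $p$ survive, applying Leibniz to $\partial_{\eta_1}(p^k) = k\,p^{k-1}\partial_{\eta_1}p$ kills all but two terms and leaves the two-term recursion
\begin{equation*}
\partial_{\eta_1}^j p^k = k\,\partial_{\eta_1}^{j-1}(p^{k-1})\cdot \partial_{\eta_1}p + k(j-1)\,\partial_{\eta_1}^{j-2}(p^{k-1})\cdot \partial_{\eta_1}^2 p.
\end{equation*}
I would then prove \eqref{sycal} by induction on $j$, with $k$ quantified freely. The base case $j=0$ is trivial. In the inductive step, substituting the hypothesis and the bounds $|\partial_{\eta_1}p|\leq 2\sqrt{\tau p}$, $|\partial_{\eta_1}^2 p| = 2\tau$ produces a common factor $p^{k-j/2}$ in both terms, after which \eqref{sycal} reduces to the numerical inequality
\begin{equation*}
8\sqrt{\tau}\,(2k-j) + (j-1)\tau \leq 64\,(2k-1).
\end{equation*}
Using $\tau \leq 1$ and $j \leq 2k$, this is implied by $18k-1 \leq 128k-64$, which holds for every $k\geq 1$. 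The endpoint $j=2k$ is handled directly: the first term of the recursion drops since $p^{k-1}$ has $\eta_1$-degree $2k-2$, while the second gives exactly $\partial_{\eta_1}^{2k}p^k = (2k)!\,\tau^k \leq 8^{2k}(2k)!$, matching the claim.

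The only genuine obstacle is keeping track of the combinatorial factor $\frac{(2k)!}{(2k-j)!}$ across the recursion. The key identities
\begin{equation*}
\frac{(2k-2)!}{(2k-1-j)!} = (2k-j)\,\frac{(2k-2)!}{(2k-j)!}, \qquad \frac{(2k)!}{(2k-j)!} = 2k(2k-1)\,\frac{(2k-2)!}{(2k-j)!}
\end{equation*}
align the contributions of the two terms of the recursion with the right-hand side of \eqref{sycal}, and the constant $8$ (rather than something smaller like $2$) provides just the slack needed to absorb the crude loss in the bound $|\partial_{\eta_1}p|\leq 2\sqrt{\tau p}$ and to close the induction.
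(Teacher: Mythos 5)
Your proof is correct and uses essentially the same mechanism as the paper: the two-term Leibniz recursion $\partial_{\eta_1}^j p^k = k(\partial_{\eta_1}^{j-1}p^{k-1})\partial_{\eta_1}p + k(j-1)(\partial_{\eta_1}^{j-2}p^{k-1})\partial_{\eta_1}^2 p$, the bounds $|\partial_{\eta_1}p|\lesssim\rho_{1/2}$ and $\partial_{\eta_1}^2 p=2\tau$, and a combinatorial check. The only differences are cosmetic: you induct on the derivative order $j$ with the power $k$ quantified freely (the paper inducts on $k$), and your completing-the-square argument yields the slightly sharper pointwise bound $|\partial_{\eta_1}\rho_1|\le 2\sqrt{\tau}\,\rho_{1/2}$ where the paper settles for the cruder $4\rho_{1/2}$, neither of which affects the conclusion.
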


\begin{proof}   We use induction on $k$ to prove \eqref{sycal}, which  holds true for $k=0$ or $k=1$, since direct computation shows
	\begin{eqnarray*}
		\partial_{\eta_1}\rho_1=2(t-t_0)\eta_1+(t-t_0)^2m_1, \quad \partial_{\eta_1}^2\rho_1=2(t-t_0),
	\end{eqnarray*}
	and moreover   $\abs{2(t-t_0)\eta_1+(t-t_0)^2m_1}\leq 4 \rho_{1/2}$ and $|2(t-t_0)|\leq 2\rho_0$ for $t\in[t_0,1]$.
	 Now  suppose   for any $i \leq k-1$ with given integer $k\geq 2,$  we have
	\begin{equation}\label{ina}
\forall\ t\in[t_0,1],\ \forall~j\leq 2i, \quad 	\abs{\partial_{\eta_1}^j \rho_i}\leq 8^j \frac{(2i)!}{(2i-j)!} \rho_{i-\frac{j}{2}},
\end{equation}
we will show the above estimate also holds for 	  $i=k$, that is,
	\begin{equation}\label{ina+}
\forall\ t\in[t_0,1],\  \forall~j\leq 2k, \quad 	\abs{\partial_{\eta_1}^j \rho_k}\leq 8^j \frac{(2k)!}{(2k-j)!} \rho_{k-\frac{j}{2}}.
\end{equation}
Note \eqref{ina+} holds true for $j=2k$, since  \begin{equation}\label{j2k}
\forall\ t\in[t_0,1],\quad  	\partial_{\eta_1}^{2k} \rho_k=(2k)! (t-t_0)^k\leq  (2k)!  \rho_{0}.
	\end{equation}
Now we consider the case  when $2\leq j\leq 2k-1$ and  write
	\begin{multline*}
		\partial_{\eta_1}^j \rho_k=	\partial_{\eta_1}^{j-1} \big[k\rho_{k-1}\big (2(t-t_0)\eta_1+(t-t_0)^2m_1\big )\big] \\
		= k(\partial_{\eta_1}^{j-1} \rho_{k-1} )\big(2(t-t_0)\eta_1+(t-t_0)^2m_1\big)+k(j-1)(\partial_{\eta_1}^{j-2} \rho_{k-1} )2(t-t_0).
	\end{multline*}
	Moreover observe $j-2\leq j-1\leq 2(k-1)$, and thus
	we use the induction assumption \eqref{ina}  as well as the fact that  $\abs{2(t-t_0)\eta_1+(t-t_0)^2m_1}\leq 4 \rho_{1/2}$ to compute
	\begin{eqnarray*}
		\abs{ k(\partial_{\eta_1}^{j-1} \rho_{k-1} ) \big(2(t-t_0)\eta_1+(t-t_0)^2m_1\big)}\leq
		k 8^{j-1}   \frac{[2(k-1)]!}{(2k-j-1)!} \rho_{k-1-\frac{j-1}{2}}\times 4\rho_{1/2} =\frac{8^{j}}{2} \frac{(2k)!}{(2k-j)!} \rho_{k- \frac{j}{2}}
	\end{eqnarray*}
	and
	\begin{eqnarray*}
	\abs{k(j-1)(\partial_{\eta_1}^{j-2} \rho_{k-1} )2(t-t_0)} \leq  k(j-1)  8^{j-2} \frac{[2(k-1)]!}{(2k-j)!}\rho_{k-1-\frac{j-2}{2}} 2(t-t_0)  \leq
		\frac{8^{j}}{2}  \frac{(2k)!}{(2k-j)!} \rho_{k- \frac{j}{2}}.	
	\end{eqnarray*}
	Combining these inequalities we obtain
	\begin{eqnarray*}
\forall\ t\in[t_0,1],\ 	\forall\ 2\leq  j\leq 2k-1,\quad	\abs{\partial_{\eta_1}^j \rho_k} \leq  8^{j} \frac{(2k)!}{(2k-j)!} \rho_{k- \frac{j}{2}}.
	\end{eqnarray*}
	Finally, direct verification shows
	\begin{eqnarray*}
\forall\ t\in[t_0,1],\ 	\forall\ 0\leq  j \leq 1,\quad	\abs{\partial_{\eta_1}^j \rho_k} \leq  8^{j} \frac{(2k)!}{(2k-j)!} \rho_{k- \frac{j}{2}}.
	\end{eqnarray*}
Combining the above two estimates  with \eqref{j2k} yields  \eqref{ina+}.  The proof of Lemma \ref{lem:syM} is   completed.
\end{proof}

   By direct computation  we have, for any $t\in[t_0,1],$
\begin{multline*}
		(t-t_0)\eta_1^2+(t-t_0)^2m_1\eta_1+\frac{(t-t_0)^3}{3}m_1^2\\
		=\frac{1}{4}\Big((t-t_0)^{1/2}\eta_1+(t-t_0)^{3/2}m_1\Big)^2+\frac{1}{12}\Big (3(t-t_0)^{1/2}\eta_1+(t-t_0)^{3/2}m_1
		\Big )^2.
\end{multline*}
Then
   $M$ can be represented  as the  square sum of a vector filed $\Lambda=(\Lambda_1,\Lambda_2)$:
 \begin{equation*}
M= \Lambda \, \cdot\, \Lambda=   \Lambda^2_1+\Lambda_2^2,
\end{equation*}
where, letting $\sqrt{-1}$ be the square root of $-1$,
\begin{equation}\label{sqroot}
\Lambda_1= \frac{1}{2\sqrt{-1}}\Big( (t-t_0)^{1/2}\partial_{v_1}+   (t-t_0)^{3/2}\partial_{x_1}\Big),\quad \Lambda_2= \frac{\sqrt 3}{6\sqrt{-1}}\Big(3 (t-t_0)^{1/2}\partial_{v_1}+   (t-t_0)^{3/2}\partial_{x_1}\Big).
\end{equation}
Note $\Lambda_i, 1\leq i\leq 2,$ are self-adjoint operators in $H^{(2,0)}.$
  By virtue of the vector field   $\Lambda$ we have
  the following Leibniz  type formula.

\begin{lemma}\label{lem: leibniz}
For any $k\in\mathbb{Z}_+$, it holds that
\begin{align}\label{Leb}
M^k( gh)=(\Lambda \, \cdot\, \Lambda)^k(g,h)=\sum^{2k}_{j=0}A_{j, 2k-j}(g,h),
\end{align}
with
\begin{align}\label{aj2k}
A_{j, 2k-j}(g,h)
&=\sum_{\ell+2p=j,\ \ell+2q=2k-j}c^{k, j}_{\ell, p, q} (\Lambda^\ell  M^pg)\,\cdot\, (\Lambda^{\ell } M^qh),
\end{align}
where   the summation is taken over   all non-negative integers $\ell, p $ and $q $ satisfying $\ell+2p=j$ and $\ell+2q=2k-j$, and
\begin{align}\label{prod}
\Lambda^\ell g\cdot\, \Lambda^\ell h:=\sum^{2}_{j_1=1}\cdot\cdot\cdot\sum^{2}_{j_\ell=1}(\Lambda_{j_1}\cdots\Lambda_{j_\ell}g)\, (\Lambda_{j_1}\cdots\Lambda_{j_\ell}h).
\end{align}
The sequence $ c^{k, j}_{\ell, p, q} $
of non-negative integers in \eqref{aj2k} are determined by
\begin{equation}\label{rela}
c^{k+1, j}_{\ell, p, q}=	c^{k, j-2}_{\ell, p-1, q}+c^{k, j}_{\ell, p, q-1}+2c^{k, j-1}_{\ell-1, p, q}
\end{equation}
where we have used the convention that
\begin{equation}\label{conven}
	c^{k,j}_{\ell,p,q} =0\  \textrm { if }  j>2k  \textrm{ or  any one entry  in  the index }   (j,\ell, p, q)  \textrm{ is } negative.
\end{equation}
Moreover
\begin{equation}\label{sum}
	\sum_{\ell+2p=j,\ \ell+2q=2k-j}c^{k, j}_{\ell, p, q}={{2k}\choose j}.
\end{equation}
\end{lemma}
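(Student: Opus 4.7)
The plan is to prove \eqref{Leb}--\eqref{sum} by induction on $k\in\mathbb{Z}_+$, exploiting the fact that the operators $\Lambda_1,\Lambda_2$ from \eqref{sqroot} are first-order constant-coefficient (in $(x,v)$) operators, hence mutually commuting and commuting with $M=\Lambda_1^2+\Lambda_2^2$. The base case $k=0$ is the trivial identity $gh=gh$ with coefficient $c^{0,0}_{0,0,0}=1$. The case $k=1$ is verified directly: applying the second-order Leibniz rule $\Lambda_i^2(gh)=(\Lambda_i^2 g)h+2(\Lambda_i g)(\Lambda_i h)+g(\Lambda_i^2 h)$ and summing over $i\in\{1,2\}$ gives
\[
M(gh)=(Mg)h+2\,\Lambda g\cdot\Lambda h+g(Mh),
\]
which matches \eqref{Leb}--\eqref{aj2k} with $c^{1,0}_{0,0,1}=c^{1,2}_{0,1,0}=1$ and $c^{1,1}_{1,0,0}=2$; these values are consistent with the recurrence \eqref{rela} under the convention \eqref{conven}.

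For the inductive step, I would apply $M$ term by term to $M^k(gh)=\sum_{j=0}^{2k} A_{j,2k-j}(g,h)$. For a single summand $(\Lambda_{j_1}\cdots\Lambda_{j_\ell}M^p g)(\Lambda_{j_1}\cdots\Lambda_{j_\ell}M^q h)$ arising from \eqref{prod}, the commutation of $\Lambda_i$ with $M$ lets me use the identity
\[
M(\phi\psi)=(M\phi)\psi+2\sum_{i=1}^{2}(\Lambda_i\phi)(\Lambda_i\psi)+\phi(M\psi).
\]
After resumming over the multi-index $(j_1,\dots,j_\ell)$, the first and third pieces reassemble into $(\Lambda^\ell M^{p+1}g)\cdot(\Lambda^\ell M^q h)$ and $(\Lambda^\ell M^p g)\cdot(\Lambda^\ell M^{q+1}h)$, while the middle piece absorbs the extra index $i\in\{1,2\}$ to yield $2(\Lambda^{\ell+1}M^p g)\cdot(\Lambda^{\ell+1}M^q h)$, directly by the convention \eqref{prod}. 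By degree counting, these three contributions land respectively in $A^{(k+1)}_{j+2,2k-j}$, $A^{(k+1)}_{j,2(k+1)-j}$, and $A^{(k+1)}_{j+1,2k+1-j}$, at target index triples $(\ell,p+1,q)$, $(\ell,p,q+1)$, and $(\ell+1,p,q)$. Reindexing to read off the coefficient at a fixed triple $(\ell,p,q)$ at level $k+1$ then produces exactly the three-term recurrence \eqref{rela}, with \eqref{conven} accounting for boundary cases where a putative predecessor has a negative entry or the constraint $j\leq 2k$ is violated.

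For the sum formula \eqref{sum}, I would sum \eqref{rela} over all $(\ell,p,q)$ with $\ell+2p=j$ and $\ell+2q=2(k+1)-j$, and reindex each term on the right by $p\to p-1$, $q\to q-1$, $\ell\to\ell-1$ respectively. Each reindexed sum then satisfies the constraints characteristic of level $k$, so the inductive hypothesis yields
\[
\sum_{\substack{\ell+2p=j\\ \ell+2q=2(k+1)-j}} c^{k+1,j}_{\ell,p,q}=\binom{2k}{j-2}+2\binom{2k}{j-1}+\binom{2k}{j}=\binom{2k+2}{j},
\]
the last equality being two applications of Pascal's identity. The main bookkeeping obstacle to guard against is verifying that the cross term of the product rule genuinely assembles into a rank-$(\ell+1)$ dot product under \eqref{prod}, rather than some less symmetric tensor; this symmetry is precisely what forces the third summand of \eqref{rela} to involve $\ell-1$ (and no other index shift), and is the structural reason the coefficients remain nonnegative integers throughout.
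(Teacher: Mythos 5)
Your proof is correct and follows essentially the same route as the paper: induction on $k$, applying $M$ to $M^k(gh)=\sum_j A_{j,2k-j}(g,h)$ via the product rule $M(\phi\psi)=(M\phi)\psi+\phi(M\psi)+2\Lambda\phi\cdot\Lambda\psi$, reindexing to obtain the recurrence \eqref{rela}, and then deriving \eqref{sum} by summing the recurrence and applying Pascal's identity twice. The only cosmetic differences are that you verify $k=1$ explicitly in addition to $k=0$, and you make explicit the role of $[\Lambda_i,M]=0$ (which the paper uses tacitly when it writes $\Lambda^\ell M^{p+1}g$ after applying $M$); both are fine.
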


\begin{proof}
We use induction on $k$ to prove \eqref{Leb}. The validity of formula \eqref{Leb} for $k=0$ is obvious. For given integer $k\geq 0$, suppose that
\begin{equation}\label{ias}
\forall\ N\leq k ,\quad	M^N( gh)=\sum^{2N}_{j=0}A_{j, 2N-j}(g,h),
\end{equation}
with $A_{j, 2N-j}(g,h)$ defined in \eqref{aj2k}.
We will prove the above assertion still holds true for $N=k+1.$

It follows from the inductive assumption \eqref{ias} that
\begin{align}\label{indstep}
M^{k+1}(gh)&=MM^{k}(gh)=\sum^{2k}_{j=0}M A_{j, 2k-j}(g,h).
\end{align}
Moreover  in view of \eqref{aj2k} and the fact
$$
 M(gh)=\Lambda\cdot\Lambda (gh)=(M g)h+g (M h)+2(\Lambda  g) \cdot  (\Lambda  h),
$$
we compute
\begin{align*}
& \sum^{2k}_{j=0}MA_{j, 2k-j}(g,h) =  \sum^{2k}_{j=0}
\sum_{\stackrel{ \ell+2p=j,}{ \ell+2q=2k-j}}c^{k, j}_{\ell, p, q}(\Lambda^\ell  M^{p+1}g)\,\cdot\, (\Lambda^{\ell } M^qh)\\
&\qquad+ \sum^{2k}_{j=0}
\sum_{\stackrel{\ell+2p=j}{ \ell+2q=2k-j}}c^{k, j}_{\ell, p, q}(\Lambda^\ell  M^pg)\,\cdot\, (\Lambda^{\ell } M^{q+1}h)
+  \sum^{2k}_{j=0}
\sum_{\stackrel{ \ell+2p=j}{ \ell+2q=2k-j}}2c^{k, j}_{\ell, p, q}(\Lambda^{\ell+1}  M^pg)\,\cdot\, (\Lambda^{\ell+1} M^qh)\\
& =  \sum^{2(k+1)}_{j=2}
\sum_{\stackrel{ \ell+p=j,}{ \ell+2q=2(k+1)-j}}c^{k, j-2}_{\ell, p-1, q}(\Lambda^\ell  M^{p}g)\,\cdot\, (\Lambda^{\ell } M^qh)+ \sum^{2k}_{j=0}
\sum_{ \stackrel{\ell+2p=j}{ \ell+2q=2(k+1)-j}}c^{k, j}_{\ell, p, q-1}(\Lambda^\ell  M^pg)\,\cdot\, (\Lambda^{\ell } M^{q}h)
\\&
\qquad+  \sum^{2k+1}_{j=1}
\sum_{\stackrel{ \ell+2p=j}{  \ell+2q=2(k+1)-j}}2c^{k, j-1}_{\ell-1, p, q}(\Lambda^{\ell}  M^pg)\,\cdot\, (\Lambda^{\ell} M^qh)\\
 &= \sum_{j=0}^{2(k+1)}\sum_{ \stackrel{\ell+2p=j}{ \ell+2q=2(k+1)-j}}c^{k+1, j}_{\ell, p}(\Lambda^\ell  M^pg)\,\cdot\, (\Lambda^{\ell } M^{q}h)=\sum_{j=0}^{2(k+1)}A_{j, 2(k+1)-j}(g,h),
\end{align*}
where in the last line we have used \eqref{rela} with the convention   \eqref{conven}.  As a result, substituting the above equations into \eqref{indstep} yields the validity of \eqref{ias} for $N=k+1$.  We have proven  \eqref{Leb}.

It remains to prove the last assertion \eqref{sum}. We use  induction on $k$.  If $k=0$ then $j=0$ and
thus $A_{0,0}(g,h)=gh$ in view of \eqref{Leb}. Meanwhile by \eqref{aj2k},
\begin{eqnarray*}
	A_{0,0}(g,h)=c^{0, 0}_{0, 0, 0} gh,
\end{eqnarray*}
which gives $c^{0, 0}_{0, 0, 0}=1$. As a result, for $k=0$ we have
\begin{eqnarray*}
	\sum_{\ell+2p=j,\ \ell+2q=2k-j}c^{k, j}_{\ell, p, q}=c^{0, 0}_{0, 0, 0}=1={{2k}\choose j}.
\end{eqnarray*}
Then
\eqref{sum} holds true for $k=0$. Supposing \eqref{sum} holds for any integer $k$ with $k\geq 1$, we will prove its validity for $k+1$.  In fact we use \eqref{rela} with the convention \eqref{conven}, to compute
\begin{align*}
 \sum_{\stackrel{\ell+2p=j}{\ell+2q=2(k+1)-j}}c^{k+1, j}_{\ell, p, q}
& =	\sum_{\stackrel{\ell+2p=j}{\ell+2q=2(k+1)-j}}c^{k, j-2}_{\ell, p-1, q}+\sum_{\stackrel{\ell+2p=j}{\ell+2q=2(k+1)-j}}c^{k, j}_{\ell, p, q-1}+\sum_{\stackrel{\ell+2p=j}{\ell+2q=2(k+1)-j}}2c^{k, j-1}_{\ell-1, p, q}
\\
&=\sum_{\stackrel{\ell+2p=j-2}{ \ell+2q=2k-(j-2)}}c^{k, j-2}_{\ell, p, q}
+
\sum_{\stackrel{\ell+2p=j}{\ell+2q=2k-j}}c^{k, j}_{\ell, p, q}+
2\sum_{\stackrel{\ell+2p=j-1}{ \ell+2q=2k-(j-1)}}c^{k, j-1}_{\ell, p, q}\\
&=
{{2k}\choose{j-2}}+{{2k}\choose j}+2{{2k}\choose{j-1}}
={{2(k+1)}\choose j}.
\end{align*}
Thus \eqref{sum} holds for all $k\geq 0$. The proof of
 Lemma \ref{lem: leibniz} is completed.
\end{proof}

\begin{remark}
	\label{rem:spec} With  $c^{k, j}_{\ell, p, q}$ given in the above Lemma \ref{lem: leibniz},
	it follows from \eqref{sum} that
  $c^{k, 2k}_{0, k, 0}=c^{k, 2k}_{0, 0, k}=1$, and   $c^{k, 2k-1}_{1, k-1, 0}=c^{k, 1}_{1, 0, k-1}=2k.$
\end{remark}

\begin{remark}\label{rem:nota}
 In the following discussion,  by writing $\norm{\Lambda^\ell g} $  for some   generic norm  $\norm{\cdot}$ we mean
\begin{eqnarray*}
		\norm{ \Lambda^\ell g} =  \bigg(\sum_{1\leq j_1\leq 2}
		\cdots\sum_{1\leq j_\ell\leq 2}
\norm{\Lambda_{j_1}\cdots \Lambda_{j_\ell}g}^2\bigg)^{1/2}.
	\end{eqnarray*}
     Note $\norm{ \Lambda^2 g}\neq \norm{ M g}$. Moreover by Cauchy inequality it follows that
  	\begin{eqnarray*}
  		\norm{ \Lambda^\ell g\cdot \Lambda^\ell h}_{(2,0)}\leq \sum^{2}_{j_1=1}\cdot\cdot\cdot\sum^{2}_{j_\ell=1}\norm{ \Lambda_{j_1}\cdots\Lambda_{j_\ell}g }_{(2,0)} \norm{ \Lambda_{j_1}\cdots\Lambda_{j_\ell}h }_{H_x^2L_v^\infty}\leq \norm{ \Lambda^\ell g }_{(2,0)}\norm{  \Lambda^\ell h}_{H_x^2L_v^\infty},
  	\end{eqnarray*}
  	recalling 	$ \Lambda^\ell g\cdot \Lambda^\ell h $ is defined by \eqref{prod}.
  	\end{remark}

\begin{lemma}\label{MM}With the notations in Remark \ref{rem:nota},
it holds that
\begin{align*}
\|\Lambda^\ell g\|_{(2,0)}^2\leq
\left\{\begin{aligned}
&\|M^{\ell/2} g\|_{(2,0)}^2    \quad    \mbox{for even  number} \,\, \ell,    \\
&\|M^{\ell/2} g\|_{(2,0)}^2  \leq \norm{M^{(\ell+1)/2} g}_{(2,0)}\norm{M^{(\ell-1)/2}  g}_{(2,0)}  \quad \mbox{for odd number}  \,\, \ell.
\end{aligned}
\right.
\end{align*}
\end{lemma}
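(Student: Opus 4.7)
The plan is to show that the first inequality in each case is actually an equality, reducing everything to a single two-line computation, and then close the odd case with one Cauchy--Schwarz. Two features of the setup drive the argument. First, the operators $\Lambda_1$ and $\Lambda_2$ defined in \eqref{sqroot} commute --- both are constant-coefficient (in $t$) linear combinations of $\partial_{x_1}$ and $\partial_{v_1}$ --- and each commutes with every $\partial_x^\alpha$, so the $\norm{\cdot}_{(2,0)}$ norm passes transparently through all of them. Second, $\Lambda_1$ and $\Lambda_2$ are self-adjoint on $H^{(2,0)}$, and hence so is $M=\Lambda_1^2+\Lambda_2^2$.

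Using commutativity to collect equal terms in the defining sum from Remark \ref{rem:nota} and then shifting each factor of $\Lambda_1^{a}\Lambda_2^{\ell-a}$ onto the other side by self-adjointness, I would write
\begin{equation*}
\norm{\Lambda^{\ell}g}_{(2,0)}^2 = \sum_{a=0}^{\ell}\binom{\ell}{a}\norm{\Lambda_1^{a}\Lambda_2^{\ell-a}g}_{(2,0)}^2 = \sum_{a=0}^{\ell}\binom{\ell}{a}\bigl(g,\,\Lambda_1^{2a}\Lambda_2^{2(\ell-a)}g\bigr)_{(2,0)} = \bigl(g,\,M^{\ell}g\bigr)_{(2,0)},
\end{equation*}
the last equality being the binomial theorem applied to the commuting operators $\Lambda_1^2$ and $\Lambda_2^2$. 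For the even case $\ell=2m$, one more use of self-adjointness gives $\bigl(g,M^{\ell}g\bigr)_{(2,0)}=\norm{M^{m}g}_{(2,0)}^2$, which actually yields the sharper equality $\norm{\Lambda^{\ell}g}_{(2,0)}=\norm{M^{m}g}_{(2,0)}$. For odd $\ell$, I would apply Plancherel in $(x,v)$ to rewrite $\bigl(g,M^{\ell}g\bigr)_{(2,0)}=\int \rho_1^{\ell}\,P(m)\,|\widehat g|^2$, where $P(m)=\sum_{|\alpha|\leq 2}m^{2\alpha}$ encodes the $(2,0)$ norm; then \eqref{FoMu}--\eqref{rhsigma} identifies this integral with $\norm{M^{\ell/2}g}_{(2,0)}^2$, giving the first inequality in the odd case (again with equality).

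The second inequality in the odd case is then the Cauchy--Schwarz bound
\begin{equation*}
\norm{M^{\ell/2}g}_{(2,0)}^2 = \bigl(M^{(\ell-1)/2}g,\,M^{(\ell+1)/2}g\bigr)_{(2,0)} \leq \norm{M^{(\ell-1)/2}g}_{(2,0)}\,\norm{M^{(\ell+1)/2}g}_{(2,0)},
\end{equation*}
where the first identity uses that $M^{\sigma}$ is a real Fourier multiplier, hence self-adjoint on $H^{(2,0)}$, together with the semigroup law $\rho_{\sigma_1}\rho_{\sigma_2}=\rho_{\sigma_1+\sigma_2}$. There is no real obstacle here; the only bookkeeping point is the well-posedness of fractional powers $M^{\sigma}$ for half-integer exponents, which is precisely the content of \eqref{FoMu} and is guaranteed by the nonnegativity $\rho_1\geq 0$ ensured by \eqref{elliptic}.
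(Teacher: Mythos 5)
Your proposal is correct and takes essentially the same approach as the paper: expand $\|\Lambda^\ell g\|^2_{(2,0)}$ as a sum, use self-adjointness and commutativity of $\Lambda_1,\Lambda_2$ to shift factors onto the other side of the inner product, sum to identify the result with $\inner{M^\ell g, g}_{(2,0)}$, and finish via the Fourier multiplier definition of $M^{\sigma}$ and Cauchy--Schwarz. Your version is a bit more explicit (collecting terms via the binomial theorem, spelling out the odd-case Plancherel step, noting the equality is sharp), but the underlying computation is the same as the paper's.
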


\begin{proof}
In view of  Remark \ref{rem:nota},
 we have
\begin{align*}
\| \Lambda^\ell  g \|_{(2,0)}^2 &=\sum^2 _{j_1=1}\,\,\cdots\,\,\sum^2_{j_\ell=1} \|\Lambda_{j_1}\cdots\Lambda_{j_\ell} g \|_{(2,0)}^2
=\sum^2 _{j_1=1}\,\,\cdots\,\,\sum^2_{j_\ell=1} \inner{\Lambda_{j_1}\cdots\Lambda_{j_\ell} g,\, \Lambda_{j_1}\cdots\Lambda_{j_\ell} g}_{(2,0)}
\\&
=\sum^2 _{j_1=1}\,\,\cdots\,\,\sum^2_{j_\ell=1} \inner{\Lambda^2_{j_1}\cdots\Lambda^2_{j_\ell} g,\,g }=\inner{M^\ell g,\,g }_{(2,0)},
\end{align*}
which implies the assertion.
The proof of lemma \ref{MM} is completed.
\end{proof}

\subsection{Proof of Proposition \ref{prop:com}: estimate on commutators} This part is devoted to estimating  the commutator between $M^k$ and the collision operator $\Gamma(f,f)$.  By Proposition \ref{proprep},
 $$\Gamma(f,f)=\sum_{1\leq j\leq 6} L_j(f,f)$$ with $L_j(f, f)$ the bilinear operators given in Proposition \ref{proprep}.
We proceed through  the following lemmas to deal with the commutator between $M^k$ and $L_j(f, f), 1\leq j\leq 6.$

In the following discussion we  will use $C\geq 1$  to denote a generic constant independent of $\epsilon, C_*$   and the derivative orders denoted by $k$, recalling $\epsilon, C_*$ are the constants given in the inductive assumption \eqref{ass:ind}.

\begin{lemma}
	[technical lemma]\label{lem:tec0}
	Suppose the  inductive assumption \eqref{ass:ind} in Proposition \ref{prop:com} holds.  Let $\phi=\phi(v)$ be a given function of $v$ variable only satisfying that
\begin{equation}\label{upgiven}
\exists\ L>0,\  \forall \ \beta\in\mathbb Z_+^3,\quad |\partial_v^\beta \phi(v)|\leq L^{\abs\beta+1}|\beta|!.
\end{equation}
Then
 \begin{eqnarray*}
    \sup_{t_0\leq t\leq 1}\norm{  M^i (\phi f) }_{(2,0)} \leq
 \left\{
 \begin{aligned}
 &C   \frac{\epsilon }{(2i+1)^3}  C_*^{ 2i} (2i)!,\quad \textrm{ if } \ 0\leq i\leq k-1,\\[2pt]
 & C \norm{M^{k} f }_{(2,0)}  +C   \frac{\epsilon }{(2k+1)^3}  C_*^{ 2k} (2k)!,\quad \textrm{ if } \ i=k,
  \end{aligned}
 \right.
   \end{eqnarray*}
 provided $C_*$ is large enough such that $C_*^{1/2}\geq 196L,$  where the constant $C$ depends on $L.$
\end{lemma}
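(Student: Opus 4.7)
The plan is to expand $M^i(\phi f)$ via the Leibniz-type formula of Lemma \ref{lem: leibniz} and control each product $(\Lambda^\ell M^p\phi)\cdot(\Lambda^\ell M^q f)$ separately using the inductive hypothesis \eqref{ass:ind}. The crucial simplification is that $\phi$ depends only on $v$, so the $(t-t_0)^{3/2}\partial_{x_1}$ component of each $\Lambda_a$ defined in \eqref{sqroot} annihilates $\phi$ and leaves $\Lambda_a\phi=c_a(t-t_0)^{1/2}\partial_{v_1}\phi$ with coefficients satisfying $\abs{c_1}^2+\abs{c_2}^2=1$. Iterating, $\Lambda^\ell M^p\phi$ equals (up to a scalar of modulus one) $(t-t_0)^{\ell/2+p}\partial_{v_1}^{\ell+2p}\phi$; summing the squared magnitudes over the $2^\ell$ multi-indices and invoking \eqref{upgiven} together with $t-t_0\leq 1$ yields
\begin{equation*}
\norm{\Lambda^\ell M^p\phi}_{L^\infty_v}\leq L^{j+1}\,j!, \qquad j:=\ell+2p.
\end{equation*}

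Next, the Cauchy--Schwarz-type bound of Remark \ref{rem:nota} gives $\norm{(\Lambda^\ell M^p\phi)(\Lambda^\ell M^q f)}_{(2,0)} \leq \norm{\Lambda^\ell M^p\phi}_{L^\infty_v}\norm{\Lambda^\ell M^q f}_{(2,0)}$, and Lemma \ref{MM} lets me replace the second factor by $\norm{M^{q_{\max}}f}_{(2,0)}^{1/2}\norm{M^{q_{\min}}f}_{(2,0)}^{1/2}$, where $q_{\max}+q_{\min}=2i-j$. Whenever $q_{\max}\leq k-1$, the inductive hypothesis \eqref{ass:ind} together with the elementary estimates $\sqrt{(2q_{\max})!(2q_{\min})!}\leq C(2i-j)!$ and $\bigl[(2q_{\max}+1)(2q_{\min}+1)\bigr]^{3/2}\geq c(2i-j+1)^3$ yields
\begin{equation*}
\norm{\Lambda^\ell M^q f}_{(2,0)}\leq C\epsilon\,\frac{C_*^{2i-j}(2i-j)!}{(2i-j+1)^3}.
\end{equation*}
The only exceptional indices appear in the case $i=k$: the term $j=0$ produces directly a $\norm{M^k f}_{(2,0)}$ factor with coefficient $L$, while the term $j=1$ has combinatorial coefficient $c^{k,1}_{1,0,k-1}=2k$ (by Remark \ref{rem:spec}) and yields $2kL^2\,\sqrt{\norm{M^k f}_{(2,0)}\norm{M^{k-1}f}_{(2,0)}}$, to which Young's inequality $\sqrt{AB}\leq A/(4k)+kB$ is applied to extract a universal multiple of $\norm{M^k f}_{(2,0)}$ and a remainder absorbed by the inductive bound on $\norm{M^{k-1}f}_{(2,0)}$.

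Summing over multi-indices via $\sum_{\ell+2p=j}c^{i,j}_{\ell,p,q}=\binom{2i}{j}$ from \eqref{sum} and using $\binom{2i}{j}\,j!\,(2i-j)!=(2i)!$, the estimates collapse to
\begin{equation*}
\sup_{t_0\leq t\leq 1}\norm{M^i(\phi f)}_{(2,0)}\leq CL\epsilon\,(2i)!\,C_*^{2i}\sum_{j=0}^{2i}\frac{(L/C_*)^j}{(2i-j+1)^3},
\end{equation*}
with an additional $C\norm{M^k f}_{(2,0)}$ term on the right-hand side when $i=k$.

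The main obstacle is to show that the remaining sum is bounded by $C/(2i+1)^3$ uniformly in $i$. Splitting at $j=i$, the range $j\leq i$ satisfies $(2i+1)/(2i-j+1)\leq 2$ and a geometric series bound delivers a contribution $\leq 8(2i+1)^{-3}/(1-L/C_*)$; the range $j>i$ is dominated by $(L/C_*)^{i+1}\sum_{m\geq 1}m^{-3}$, and the product $(2i+1)^3(L/C_*)^{i+1}$ remains uniformly bounded in $i$ exactly when $L/C_*^{1/2}$ is small. This forces the quadratic smallness $C_*^{1/2}\geq 196L$ stated in the lemma, the constant $196$ being tuned so that both splits and Young's inequality close with a single constant $C$ independent of $k$. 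The delicate compatibility between the combinatorial growth in the Leibniz expansion and the $(2i-j+1)^{-3}$ decay inherited from the inductive hypothesis is the technical heart of the proof.
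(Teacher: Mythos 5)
Your proposal is correct, but it proves the lemma by a genuinely different route than the paper. The paper works on the Fourier side: since $\phi$ depends only on $v$, they write
\[
\widehat{M^i(\phi f)}(m,\eta)-\widehat{\phi M^i f}(m,\eta)=\int \hat\phi(\eta-\tau)\bigl[\rho_i(m_1,\eta_1)-\rho_i(m_1,\tau_1)\bigr]\hat f(m,\tau)\,d\tau,
\]
expand $\rho_i$ in an exact Taylor polynomial in $\eta_1-\tau_1$ (it is a polynomial of degree $2i$ in $\eta_1$), and then invoke the symbolic-calculus Lemma~\ref{lem:syM}, $|\partial_{\eta_1}^j\rho_i|\leq 8^j\frac{(2i)!}{(2i-j)!}\rho_{i-j/2}$, to reduce each remainder term to a bound on $\|M^{i-j/2}f\|_{(2,0)}$. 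You instead stay in physical space and apply the Leibniz-type expansion of Lemma~\ref{lem: leibniz} in the vector fields $\Lambda_1,\Lambda_2$, together with Lemma~\ref{MM} and the identity $\sum c^{i,j}_{\ell,p,q}=\binom{2i}{j}$; the fact that $\phi$ depends only on $v$ collapses $\Lambda^\ell M^p\phi$ to $(t-t_0)^{\ell/2+p}\partial_{v_1}^{\ell+2p}\phi$ and, summed in $\ell_2$ over the $j_1,\dots,j_\ell$ multi-indices, the coefficients satisfy $\sum_a|c_a|^2=1$, so no extra exponential factor appears. Both routes land on the same kind of sum $\sum_j\frac{(\cdot L/C_*)^j}{(2i-j+1)^3}$, which is closed by splitting at $j=i$; your version carries $(L/C_*)^j$ whereas the paper's carries $(8L/C_*)^j$ due to Lemma~\ref{lem:syM}, so your bookkeeping is in fact slightly tighter (the hypothesis $\sqrt{C_*}\geq 196L$ is more than you need, though of course still sufficient). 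Your treatment of the exceptional indices at $i=k$ ($j=0$ giving $L\|M^k f\|_{(2,0)}$ directly, $j=1$ handled with $c^{k,1}_{1,0,k-1}=2k$, Lemma~\ref{MM} and Young's inequality) is the same content as the paper's isolation of the $j=1$ term and its use of $\|M^{i-1/2}f\|\leq\|M^i f\|^{1/2}\|M^{i-1}f\|^{1/2}$, just organized through the Leibniz coefficients. One small imprecision: the hypothesis $\sqrt{C_*}\geq 196L$ is not ``forced'' by your argument (which only needs $L/C_*$ small, not $L/\sqrt{C_*}$), it is simply inherited from the statement; and $\Lambda^\ell M^p\phi$ is not a single modulus-one multiple of $(t-t_0)^{\ell/2+p}\partial_{v_1}^{\ell+2p}\phi$ for each multi-index $(j_1,\dots,j_\ell)$, it is $\prod_a c_{j_a}$ times it, and only after summing squares does the factor become~$1$. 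Neither affects the validity of the argument. An advantage of your approach is that it reuses the Leibniz machinery already deployed in the commutator estimates of Lemma~\ref{lem:L1}, keeping the toolkit uniform; the paper's Fourier--Taylor route is more self-contained for this one lemma and avoids reasoning about the vector fields $\Lambda$.
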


\begin{proof}
The assertion for $i=0$ is obvious in view of the inductive assumption \eqref{ass:ind}. Now consider the case when $1\leq i\leq k.$
Using \eqref{FoMu} yields
\begin{align*}
	 \widehat{ M^i(\phi f)}(m,\eta)-\widehat{\phi M^i f}(m,\eta)
& =  \int_{\mathbb R^3}\hat\phi( \eta-\tau)\Big[ \rho_i(m_1,\eta_1)\hat f(m,\tau )  -\rho_i(m_1,\tau_1)\hat f(m,\tau ) \Big] d\tau\\
	&= \int_{\mathbb R^3}\hat \phi( \eta-\tau)  \sum_{1\leq j\leq 2k}\frac{(\partial_{\eta_1}^j\rho_{i})(m_1,\tau_1)}{j!}(\eta_1-\tau_1)^j \hat f(m,\tau ) d\tau \\
	&=  \sum_{1\leq j\leq 2i} \frac{1}{j!}\int_{\mathbb R^3}\widehat{ \partial_{v_1}^j\phi}( \eta-\tau)  (\partial_{\eta_1}^j\rho_{i})(m_1,\tau_1)   \hat f(m,\tau ) d\tau.
\end{align*}
This implies
\begin{equation}\label{instep}
\begin{aligned}
	 \norm{  M^{i} (\phi f) }_{(2,0)}\leq  \norm{ \phi M^{i} f }_{(2,0)}+  \sum_{1\leq j\leq 2i}\frac{1}{j!} \norm{(\partial_{v_1}^j\phi) P_j f }_{(2,0)},
	 \end{aligned}
\end{equation}
where   the operator $P_j$ in the last term stands for a Fourier multiplier with the symbol $\partial_{\eta_1}^j\rho_{i}$, that is,
\begin{eqnarray*}
	\widehat{ P_j f}(m,\eta)=\partial_{\eta_1}^{j} \rho_{i}(m_1,\eta_1) \hat f(m,\eta).
\end{eqnarray*}
As a result, we use  \eqref{sycal}
 and \eqref{upgiven} as well as \eqref{FoMu},   to conclude
\begin{equation*}
	\sum_{1\leq j\leq 2i}\frac{1}{j!} \norm{(\partial_{v_1}^j\phi) P_j f }_{(2,0)}\leq   \sum_{1\leq j \leq 2i} L^{j+1}8^j\frac{(2i)!}{(2i-j)!} \|   M^{i-\frac{j}{2}}f  \|_{(2,0)}.\end{equation*}
Substituting the above estimate into \eqref{instep} yields
\begin{equation*}\label{medine}
	 \norm{  M^{i} (\phi f) }_{(2,0)} \leq  C \norm{   M^{i} f }_{(2,0)}+16L^2 \,i\, \|   M^{i-\frac{1}{2}}f  \|_{(2,0)}+\sum_{2\leq j \leq 2i} L^{j+1}8^j\frac{(2i)!}{(2i-j)!} \|   M^{i-\frac{j}{2}}f  \|_{(2,0)}
\end{equation*}
Next we deal with
  the last term on the right hand side.  When $1\leq i\leq k$, we use inductive assumption \eqref{ass:ind}  to conclude that  for any integer  $j$ with $2\leq j\leq 2i$, if  $j$ is even then
\begin{eqnarray*}
	\|   M^{i-\frac{j}{2}}f  \|_{(2,0)} \leq \frac{\epsilon}{(2i-j+1)^3} C_*^{2i-j}(2i-j)!,
\end{eqnarray*}
and meanwhile  if
  $j$ is odd then
\begin{align*}
&	 \|   M^{i-\frac{j}{2}}f  \|_{(2,0)} \leq   \|   M^{i-\frac{j+1}{2}}f  \|_{(2,0)}^{1/2} \|   M^{i-\frac{j-1}{2}}f  \|_{(2,0)}^{1/2}\\
	 &\leq  C\frac{\epsilon}{(2i-j+1)^3} \Big[C_*^{2i-j-1}(2i-j-1)!C_*^{2i-j+1}(2i-j+1)!\Big]^{1\over2} \leq C\frac{\epsilon}{(2i-j+1)^3} C_*^{2i-j}(2i-j)!
\end{align*}
due to Lemma \ref{MM}.
Thus we compute
 \begin{equation}\label{suet}
  \begin{aligned}
  		&\sum_{2\leq j \leq 2i} L^{j+1}8^j\frac{(2i)!}{(2i-j)!} \|   M^{i-\frac{j}{2}}f  \|_{(2,0)}
		 \leq C\epsilon (2i)!   \sum_{2\leq j \leq 2i} L^{j+1} 8^j  \frac{1}{(2i-j+1)^3}C_*^{ 2i-j}\\
		&\leq C\epsilon (2i)!   \sum_{2\leq j \leq  i} L^{j+1} 8^j  \frac{1}{(2i-j+1)^3}C_*^{ 2i-j} + C\epsilon (2i)!   \sum_{   i+1\leq j\leq 2i} L^{j+1} 8^j  \frac{1}{(2i-j+1)^3}C_*^{ 2i-j}\\
		&\leq C   \frac{\epsilon }{(2i+1)^3}  C_*^{ 2i} (2i)!   \bigg[\sum_{2\leq j\leq i} (8 L)^{j+1}  C_*^{  -j}+ \sum_{   i+1\leq j\leq 2i} (8L)^{j+1}     (2i+1)^3 C_*^{-j}\bigg]\\
		&\leq C   \frac{\epsilon }{(2i+1)^3}  C_*^{ 2i} (2i)!,
  \end{aligned}
  \end{equation}
  the last line using the estimates that
  \begin{eqnarray*}
  \sum_{2\leq j\leq i }(8 L)^{j+1}  C_*^{  -j} \leq C
  \end{eqnarray*}
  and
  \begin{align*}
   \sum_{   i+1\leq j\leq   2i} (8L)^{j+1}     (2i+1)^3 C_*^{ -j} &\leq  	\sum_{   i+1\leq j\leq   2i}(8L)^{2i+1}    (2i+1)^3 C_*^{ -i-1} \\
   &\leq  (8L)^{2i+1}    (2i+1)^4 C_*^{ -i-1}\leq   (192L)^{2i+1}     C_*^{ -i-1}\leq C,
  \end{align*}
  provided $\sqrt{C_*}\geq 192L.$ Thus combining \eqref{suet} and \eqref{instep} we conclude, for any $1\leq i\leq k,$
  \begin{equation*}
  \begin{aligned}
  \norm{  M^{i} (\phi f) }_{(2,0)} &\leq  C \norm{   M^{i} f }_{(2,0)}+16L^2 \,i\, \|   M^{i-\frac{1}{2}}f  \|_{(2,0)}+C   \frac{\epsilon }{(2i+1)^3}  C_*^{ 2i} (2i)!\\
  &\leq  C \norm{   M^{i} f }_{(2,0)}+C i^2 \|   M^{i-1}f  \|_{(2,0)}+C   \frac{\epsilon }{(2i+1)^3}  C_*^{ 2i} (2i)!,
  \end{aligned}
  \end{equation*}
  the last inequality using the fact that $\|   M^{i-\frac{1}{2}}f  \|_{(2,0)}\leq \|   M^{i}f  \|_{(2,0)}^{1/2}\|   M^{i-1}f  \|_{(2,0)}^{1/2}$.  This with the inductive assumption \eqref{ass:ind} yields
  \begin{equation}\label{asseven}
  \forall\ 1\leq i\leq k-1,\quad		\sup_{t_0\leq t\leq 1}\norm{   M^i (\phi f) }_{(2,0)}  \leq C   \frac{\epsilon }{(2i+1)^3}  C_*^{ 2i} (2i)!
  \end{equation}
  and
  \begin{align*}
   \norm{  M^{k} (\phi f) }_{(2,0)} \leq  C \norm{M^{k} f }_{(2,0)}  +C   \frac{\epsilon }{(2k+1)^3}  C_*^{ 2k} (2k)!.
  \end{align*}
 The proof of Lemma \ref{lem:tec0} is completed.
\end{proof}

\begin{corollary} \label{cor:tec1}
	Suppose the  inductive assumption \eqref{ass:ind} in Proposition \ref{prop:com} holds.  Let $\phi=\phi(v)$ be a given function of $v$ variable satisfying the estimate
 \eqref{upgiven}.
  Then with the notations in Remark \ref{rem:nota},
  \begin{eqnarray*}
  \forall\,(\ell, p)\in\mathbb Z_+^2 \textrm{ with }	\ell+2p\leq 2k-2,\quad \sup_{t_0\leq t\leq 1}\norm{\Lambda^\ell  M^p (\phi f) }_{(2,0)} \leq C   \frac{\epsilon }{(\ell+2p+1)^3}  C_*^{ \ell+2p} (\ell+2p)!.
  \end{eqnarray*}
  Moreover,
   \begin{eqnarray*}
  	\sup_{t_0\leq t\leq 1}\norm{\Lambda^1  M^{k-1} (\phi f) }_{(2,0)} \leq C    \sup_{t_0\leq t\leq 1} \norm{M^{k} f }_{(2,0)} ^{1\over2}\bigg(     \frac{\epsilon }{(2k+1)^3}  C_*^{ 2k-2} (2k-2)!\bigg)^{1\over2}   +     C   \frac{\epsilon  C_*^{ 2k-1} (2k-1)!}{(2k+1)^3} .
       \end{eqnarray*}
\end{corollary}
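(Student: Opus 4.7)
The plan is to reduce both claims to Lemma \ref{lem:tec0} via Lemma \ref{MM}, which bounds $\|\Lambda^\ell M^p(\phi f)\|_{(2,0)}$ by $\|M^{\ell/2+p}(\phi f)\|_{(2,0)}$ when $\ell$ is even, or by the geometric mean $\bigl(\|M^{(\ell+1)/2+p}(\phi f)\|_{(2,0)}\|M^{(\ell-1)/2+p}(\phi f)\|_{(2,0)}\bigr)^{1/2}$ when $\ell$ is odd. The key arithmetic is to monitor whether the resulting $M$-indices remain $\leq k-1$, so that the clean branch of Lemma \ref{lem:tec0} applies, or whether one of them hits the boundary index $k$, in which case the $\|M^{k}f\|_{(2,0)}$ term must show up.

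First I would dispose of the main assertion $\ell+2p\leq 2k-2$. For even $\ell$, Lemma \ref{MM} together with Lemma \ref{lem:tec0} at $i=\ell/2+p\leq k-1$ gives the claim directly. For odd $\ell$, parity sharpens the hypothesis to $\ell+2p\leq 2k-3$, so both indices $(\ell\pm 1)/2+p$ remain bounded by $k-1$; applying Lemma \ref{lem:tec0} to each factor of
\[
\|\Lambda^\ell M^p(\phi f)\|_{(2,0)}^2\leq \|M^{(\ell+1)/2+p}(\phi f)\|_{(2,0)}\,\|M^{(\ell-1)/2+p}(\phi f)\|_{(2,0)}
\]
and extracting a square root yields the desired bound after absorbing the factorial and cubic-denominator mismatches through the elementary estimates $\sqrt{(n+1)!(n-1)!}\leq C\,n!$ and $(n-1)(n+1)\geq C^{-1}n^{2}$ with $n=\ell+2p+1$. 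For the boundary case $\ell=1$, $p=k-1$ (so $\ell+2p=2k-1$), the odd case of Lemma \ref{MM} gives
\[
\|\Lambda^1 M^{k-1}(\phi f)\|_{(2,0)}^2\leq \|M^{k}(\phi f)\|_{(2,0)}\,\|M^{k-1}(\phi f)\|_{(2,0)}.
\]
The second factor is controlled by the $i=k-1$ branch of Lemma \ref{lem:tec0}, giving $C\epsilon\,C_*^{2k-2}(2k-2)!/(2k-1)^3$; the first factor lies at the boundary $i=k$, so the corresponding branch yields $C\|M^kf\|_{(2,0)}+C\epsilon\,C_*^{2k}(2k)!/(2k+1)^{3}$. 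Splitting $\sqrt{A+B}\leq \sqrt{A}+\sqrt{B}$ distributes the product into exactly the two summands in the statement, after noting $\sqrt{(2k)!(2k-2)!}\leq C(2k-1)!$ and that $(2k-1)/(2k+1)$ is bounded away from $0$, so $(2k-1)^{-3}$ and $(2k+1)^{-3}$ are equivalent up to an absolute constant.

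The only real work is arithmetic bookkeeping: tracking the cubic prefactors $(\cdot)^{-3}$ and the half-integer shifts in the factorials carefully enough that the target denominator $(\ell+2p+1)^3$, respectively $(2k+1)^3$, survives with an absolute constant $C$. No new analytic ingredient is needed beyond Lemmas \ref{MM} and \ref{lem:tec0}; the slight subtlety is simply recognizing that the odd parity of $\ell$ in the main assertion forces the strict inequality $\ell+2p\leq 2k-3$, which is exactly what prevents the $\|M^kf\|_{(2,0)}$ term from appearing outside the specific boundary case $(\ell,p)=(1,k-1)$.
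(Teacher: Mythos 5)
Your proposal is correct and follows essentially the same route as the paper's proof: reduce to Lemma \ref{lem:tec0} through Lemma \ref{MM}, using the parity of $\ell$ to split into an even case and an odd case (with the observation that an odd $\ell$ with $\ell+2p\leq 2k-2$ forces $\ell+2p\leq 2k-3$), and treating the boundary case $(\ell,p)=(1,k-1)$ separately so that the $i=k$ branch of Lemma \ref{lem:tec0} produces the $\norm{M^k f}_{(2,0)}$ term. The combinatorial bookkeeping you invoke, $\sqrt{(n+1)!(n-1)!}\leq C\,n!$ and the comparability of $(2k-1)^{-3}$ and $(2k+1)^{-3}$, matches exactly what the paper uses.
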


\begin{proof} We prove the first assertion.
If   $\ell$ is even, then by the assumption   $0\leq \ell+2p\leq 2k-2$  it follows that
\begin{equation*}\label{el}
0\leq \frac{\ell}{2}+p\leq k-1.
\end{equation*}
 As a result,  we use Lemmas    \ref{MM} and  \ref{lem:tec0} to conclude that the following estimate
  \begin{equation}\label{asseven+}
  		\sup_{t_0\leq t\leq 1}\norm{\Lambda^\ell  M^p (\phi f) }_{(2,0)}\leq \sup_{t_0\leq t\leq 1}\norm{   M^{p+\frac{\ell}{2}} (\phi f) }_{(2,0)} \leq C   \frac{\epsilon }{(2p+\ell+1)^3}  C_*^{ 2p+\ell} (2p+\ell)!
  \end{equation}
  holds true for any pair  $(\ell,p)\in\mathbb Z_+^2$   with $\ell$ even and   $0\leq \ell+2p\leq 2k-2$.

  Now we deal with the case when $\ell$ is odd and $\ell+2p\leq 2k-2$,  which implies $\ell+2p\leq 2k-3$ and thus $p+\frac{\ell+1}{2}\leq k-1$.  This enables us to apply again   Lemmas \ref{MM}  and \ref{lem:tec0}  to compute
	\begin{align*}
		\norm{\Lambda^\ell M^p (\phi f) }_{(2,0)}&\leq \norm{  M^{p+\frac{\ell-1}{2}} (\phi f) }_{(2,0)}^{1/2}\norm{  M^{p+\frac{\ell+1}{2}} (\phi f) }_{(2,0)}^{1/2}\\
		&\leq C   \frac{\epsilon }{(2p+\ell+1)^3}  \bigg[C_*^{ 2p+\ell-1} (2p+\ell-1)!C_*^{ 2p+\ell+1} (2p+\ell+1)!\bigg]^{1/2}\\
	& \leq  C   \frac{\epsilon }{(2p+\ell+1)^3}  C_*^{ 2p+\ell-1} (2p+\ell)!,
	\end{align*}
	 which holds for all pair $(\ell, p)\in\mathbb Z_+^2$  with  $\ell$ odd and $ \ell+2p\leq 2k-2$.
This with \eqref{asseven+} yields the first assertion in Corollary \ref{cor:tec1}.

As for the second assertion we make use of  Lemmas \ref{MM} and \ref{lem:tec0} to get
\begin{eqnarray*}
\begin{aligned}
	 \norm{\Lambda^1 M^{k-1} (\phi f) }_{(2,0)}&\leq\norm{ M^{k} (\phi f) }_{(2,0)}^{1/2}\norm{ M^{k-1} (\phi f) }_{(2,0)}^{1/2}\\
 &\leq C \bigg( \norm{M^{k} f }_{(2,0)}  +    \frac{\epsilon }{(2k+1)^3}  C_*^{ 2k} (2k)!\bigg)^{1/2}\bigg(     \frac{\epsilon }{(2k+1)^3}  C_*^{ 2k-2} (2k-2)!\bigg)^{1/2}\\
	&\leq C    \sup_{t_0\leq t\leq 1} \norm{M^{k} f }_{(2,0)} ^{1\over 2}\bigg(     \frac{\epsilon }{(2k+1)^3}  C_*^{ 2k-2} (2k-2)!\bigg)^{1\over 2}   +     C   \frac{\epsilon }{(2k+1)^3}  C_*^{ 2k-1} (2k-1)!.
	\end{aligned}
	\end{eqnarray*}
	The proof of Corollary  \ref{cor:tec1} is completed.
\end{proof}

\begin{lemma}
	[technical lemma]\label{lem:tec2} Suppose the  inductive assumption \eqref{ass:ind} in Proposition \ref{prop:com} holds.  Then
	\begin{equation}\label{diffest}
	\forall\, (\ell,q)\in\mathbb Z_+^2 \textrm{ with }  \ell+2q\leq 2k-2,\quad  \bigg( \int_{t_0}^1\norm{\comi v^{\gamma \over 2}\Lambda^\ell  M^q \mathcal P f}_{(2,0)}^2dt\bigg)^{1\over2}\leq
    \frac{C \epsilon   }{(2q+\ell+1)^3}C_*^{ 2q+\ell} (2q+\ell)!,	 	\end{equation}
and moreover,	
	\begin{multline}\label{diffest+}
	\bigg( \int_{t_0}^1\norm{\comi v^{\gamma\over2}\Lambda^1 M^{k-1} \mathcal P f}_{(2,0)}^2dt\bigg)^{1\over2}\\ \leq C\bigg( \int_{t_0}^1\norm{ \comi v^{\gamma\over2}  \mathcal P M^{k} f}_{(2,0)}^2dt\bigg)^{1\over4}\bigg[  \frac{\epsilon }{(2k-1)^3}C_*^{ 2k-2} (2k-2)!\bigg]^{1\over 2}+C \frac{\epsilon }{(2k+1)^3}C_*^{ 2k-2} (2k-1)!, 	 	\end{multline}
 where $\mathcal P$ stands for any one of the  operators $v\wedge\partial_v, \partial_v$ and $v$.
\end{lemma}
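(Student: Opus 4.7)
The strategy parallels that of Lemma~\ref{lem:tec0} and Corollary~\ref{cor:tec1}: commute $\mathcal P$ and the weight $\comi v^{\gamma/2}$ past $\Lambda^\ell M^q$, reduce the main term to something already bounded by the inductive assumption \eqref{ass:ind}, and absorb the commutator remainders using the extra factor $C_*^{-1}$ they generate. The first key observation is that $\Lambda_1, \Lambda_2$ and $M$ are Fourier multipliers in $(x_1, v_1)$ only, so they commute with $\partial_v$, with multiplications by $v_2$ and $v_3$, and with one another. Consequently, when $\mathcal P = \partial_v$ one has exactly $\Lambda^\ell M^q \partial_v f = \partial_v \Lambda^\ell M^q f$; when $\mathcal P = v \wedge \partial_v$ or $\mathcal P = v$, only the components involving multiplication by $v_1$ produce commutators, which are Fourier multipliers with symbol $-i\partial_{\eta_1}\sigma$, where $\sigma$ is the symbol of $\Lambda^\ell M^q$. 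Lemma~\ref{lem:syM} then decomposes this derivative into pieces of strictly lower total order $\ell' + 2q' \leq \ell + 2q - 1$. Similarly, $[\Lambda_j, \comi v^{\gamma/2}]$ is a bounded multiplication operator, so iteratively moving $\comi v^{\gamma/2}$ through $\Lambda^\ell M^q$ generates only lower-order pieces of the shape $\Lambda^{\ell'} M^{q'}(\phi_\alpha f)$, with $\phi_\alpha$ a function of $v$ satisfying \eqref{upgiven} for a fixed $L$.

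After these reductions the main term takes the form $\norm{\mathcal P' \Lambda^\ell M^q \comi v^{\gamma/2} f}_{(2,0)}$ with $\mathcal P' \in \{\partial_v,\ v \wedge \partial_v,\ v\}$. Applying Lemma~\ref{MM} inside the $(\cdot,\cdot)_{(2,0)}$-pairing gives $\norm{\Lambda^\ell g}_{(2,0)}^2 \leq \norm{M^{\ell/2} g}_{(2,0)}^2$ for even $\ell$ (with the corresponding interpolation for odd $\ell$), so the main term is dominated by $\norm{\psi(v,D_v) M^{q + \ell/2} f}_{(2,0)}$ plus harmless lower-order contributions. When $\ell + 2q \leq 2k-2$ one has $q + \lceil \ell/2 \rceil \leq k-1$, and the inductive assumption \eqref{ass:ind} delivers exactly the right-hand side of \eqref{diffest}. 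The commutator remainders $\Lambda^{\ell'} M^{q'}(\phi_\alpha f)$ with $\ell' + 2q' \leq \ell + 2q - 1$ are controlled by Corollary~\ref{cor:tec1}; the gain of one unit in total order provides the $C_*^{-1}$ factor needed to make the series over all commutator contributions geometrically summable, under the same threshold on $C_*$ as in Lemma~\ref{lem:tec0}.

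The refined bound \eqref{diffest+} corresponds to $\ell = 1,\ q = k-1$, which sits exactly on the boundary of the inductive range. Here the odd-$\ell$ interpolation from Lemma~\ref{MM} reads $\norm{\Lambda M^{k-1} h}_{(2,0)} \leq \norm{M^k h}_{(2,0)}^{1/2}\,\norm{M^{k-1} h}_{(2,0)}^{1/2}$, and the main term splits into a geometric mean: one factor involves the time-integrated $\psi M^k f$ norm, which cannot be closed inductively at level $k$ and hence must appear explicitly on the right-hand side of \eqref{diffest+}; the other is controlled by \eqref{ass:ind} at level $k-1$. The remaining commutator remainders contribute the additive piece of size $\epsilon\, C_*^{2k-2}(2k-1)!/(2k+1)^3$.

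The main obstacle, as in Lemma~\ref{lem:tec0}, is the combinatorial bookkeeping needed to collect and bound all commutator contributions arising simultaneously from $\mathcal P$ (governed by Lemma~\ref{lem:syM}) and from the weight $\comi v^{\gamma/2}$. Compared to Lemma~\ref{lem:tec0} this roughly doubles the number of terms to be tracked, but it does not alter the underlying convergence mechanism, since every commutator strictly reduces the total index $\ell + 2q$ by at least one and therefore produces the $C_*^{-1}$ gain required for the full series to close.
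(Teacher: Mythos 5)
Your proposal captures the right toolbox (commutation, the $\norm{\Lambda^\ell g}\leq\norm{M^{\ell/2}g}$ interpolation of Lemma~\ref{MM}, and the inductive bound \eqref{ass:ind}), but the key technical obstruction that Lemma~\ref{lem:tec2} exists to overcome is glossed over, and the strategy you describe does not resolve it. The difficulty is the weight. Lemma~\ref{MM} asserts $\norm{\Lambda^\ell g}_{(2,0)}\leq\norm{M^{\ell/2}g}_{(2,0)}$, and its proof is purely by $L^2$-self-adjointness of $\Lambda_j$; it gives nothing for a weighted quantity $\norm{\comi v^{\gamma/2}\mathcal P'\,\Lambda^\ell g}_{(2,0)}$, because $\comi v^{\gamma/2}$ does not commute with $\Lambda_j$. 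If you move the weight inside to form $\norm{\mathcal P'\Lambda^\ell M^q(\comi v^{\gamma/2}f)}$ and then apply Lemma~\ref{MM}, you land on $\norm{\mathcal P' M^{\ell/2+q}(\comi v^{\gamma/2}f)}$ with the weight still trapped \emph{inside}, which is not $\norm{\psi(v,D_v)M^{\ell/2+q}f}$ and is not directly bounded by \eqref{ass:ind}; converting back requires commuting $\comi v^{\gamma/2}$ through $M^{\ell/2+q}$, and $[M,\comi v^{\gamma/2}]$ is of order $M^{1/2}$ rather than bounded, so this is \emph{not} a harmless lower-order remainder. Your sketch never explains how this last conversion is carried out, nor why the resulting cascade of weight commutators sums to something controlled. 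The paper avoids precisely this problem by never commuting the weight across the full block $\Lambda^\ell M^q$: it proves a preliminary bound \eqref{copre} handling only the $\ell=0$ case (via the explicit identity $[M,\,v\wedge\partial_v]=2\sum_i\Lambda_i([\Lambda_i,v]\wedge\partial_v)$ and an interpolation to control the cross term $\norm{M^{1/2}\partial_v\comi v^{\gamma/2}M^{j-1}f}$), and then performs a stepwise induction on $\ell$ peeling off one $\Lambda$ (or $\Lambda^2$) at a time, commuting the weight through just that single $\Lambda_j$ (whose commutator with $\comi v^{\gamma/2}$ is genuinely bounded), and interpolating $\norm{\Lambda^1 h}\leq\norm{Mh}^{1/2}\norm{h}^{1/2}$.

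A secondary gap: you invoke Lemma~\ref{lem:syM} to decompose $[\mathcal P,\ \Lambda^\ell M^q]$, but that lemma provides symbolic estimates only for $\rho_k=\sigma(M^k)$, not for the symbol of $\Lambda^\ell M^q$ with $\ell$ odd; while a variant can no doubt be proved, it is not what is stated, and your argument would need it. Together these two points mean the proposal as written does not close; the $\ell$-induction of the paper (with \eqref{copre} as base case) is the missing structural idea.
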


\begin{proof}
	  As a preliminary step we first show that
\begin{equation}\label{copre}
\forall\ j\geq 1,\quad	 \norm{\comi v^{\gamma\over2}  M^j \mathcal P f}_{(2,0)}\leq 	 C \norm{\psi(v,D_v) M^j  f}_{(2,0)}+Cj^2\norm{\psi(v,D_v) M^{j-1}  f}_{(2,0)},
\end{equation}
recalling $\norm{\psi(v,D_v) g}_{(2,0)}$ is defined by \eqref{ma+0}.   Without loss of generality we only prove \eqref{copre} for $\mathcal P=v\wedge \partial_v$, and the other cases can be treated in the same way with simpler argument.
Recall  $M=\Lambda_1^2+\Lambda_2^2$ with $\Lambda_j$ given in \eqref{sqroot}. Then Direct verification shows
\begin{equation}\label{comwedge}
[M, v\wedge \partial_v]=2\sum_{1\leq i\leq 2}\big([\Lambda_i , v]\wedge \partial_v\big)\Lambda_i=2\sum_{1\leq i\leq 2}\Lambda_i\big([\Lambda_i , v]\wedge \partial_v\big),  \textrm{ with } [\Lambda_i , v]=c_i\big ((t-t_0)^{1\over 2}, 0, 0\big ),
\end{equation}
where $c_i,1\leq i\leq 2,$ are constants.
Thus
 \begin{eqnarray*}
 \begin{aligned}
& \comi v^{\gamma/2}   M^j (v\wedge\partial_v) =\comi v^{\gamma/2} (v\wedge\partial_v)
   M^j+2j   \sum_{1\leq i\leq 2}\comi v^{\gamma/2}\Lambda_i\big([\Lambda_i , v]\wedge \partial_v\big)  M^{j-1}\\
   & =\comi v^{\gamma/2} (v\wedge\partial_v)
   M^j+2j \sum_{1\leq i\leq 2} \Lambda_i  \comi v^{\gamma/2}  \big([\Lambda_i , v]\wedge \partial_v\big)  M^{j-1}+2j \sum_{1\leq i\leq 2}  [\comi v^{\gamma/2},\ \Lambda_i ]  \big([\Lambda_i , v]\wedge \partial_v\big)  M^{j-1}.
   \end{aligned}
   \end{eqnarray*}
   Moreover observe that
   \begin{eqnarray*}
   \sum_{1\leq i\leq 2}	\norm{ \Lambda_i  \comi v^{\gamma/2}  \big([\Lambda_i , v]\wedge \partial_v\big)  M^{j-1}f}_{(2,0)}\leq C\norm{M^{1/2}\comi v^{\gamma/2}    \partial_v M^{j-1}f}_{(2,0)}
   \end{eqnarray*}
   due to Lemma \ref{MM}, and that
   \begin{eqnarray*}
   	\sum_{1\leq i\leq 2}	\norm{ [\comi v^{\gamma/2},\ \Lambda_i ]  \big([\Lambda_i , v]\wedge \partial_v\big)  M^{j-1}f}_{(2,0)}\leq C\norm{M^{1/2} M^{j-1}f}_{(2,0)},
   \end{eqnarray*}
   which holds because it follows from the fact $\gamma\leq 1$ that
   \begin{equation}\label{uppforvgamma}
\forall\ 1\leq \abs\beta\leq 2,\quad    	|\partial_v^\beta \comi v^{\gamma/2}|\leq C.
   \end{equation}
 This  yields
   \begin{equation}\label{ueti}
   \begin{aligned}
    	&\norm{\comi v^{\gamma\over2}  M^j (v\wedge\partial_v) f}_{(2,0)}\\
    	&\leq	\norm{\comi v^{\gamma\over2}  (v\wedge\partial_v) M^j f}_{(2,0)} +Cj\norm{M^{1/2} \partial_v\comi v^{\gamma\over2}   M^{j-1} f}_{(2,0)}+Cj\norm{M^{1/2} M^{j-1} f}_{(2,0)} \\
    	&\leq	C \norm{\psi(v,D_v)M^j f}_{(2,0)} +Cj^2\norm{\psi(v,D_v)  M^{j-1} f}_{(2,0)}+Cj\norm{M^{1/2} \partial_v\comi v^{\gamma\over2}   M^{j-1} f}_{(2,0)}.    	\end{aligned}
   \end{equation}
   To estimate the last term on the right hand side of \eqref{ueti}, we   write
\begin{align*}
	 \norm{M^{1/2} \partial_v\comi v^{\gamma\over2}   M^{j-1} f}_{(2,0)}^2 & \leq \norm{M  \partial_v\comi v^{\gamma\over2}   M^{j-1} f}_{(2,0)}\norm{ \partial_v\comi v^{\gamma\over2}   M^{j-1} f}_{(2,0)}\\
	 &\leq C \norm{M  \partial_v\comi v^{\gamma\over2}   M^{j-1} f}_{(2,0)}\norm{ \psi(v,D_v)  M^{j-1} f}_{(2,0)},
\end{align*}
and moreover,  by direct computation and using   \eqref{uppforvgamma},
\begin{align*}
		  & \norm{M  \partial_v\comi v^{\gamma\over2}   M^{j-1} f}_{(2,0)}=\norm{   \partial_v\comi v^{\gamma\over2} M  M^{j-1} f+\partial_v [M,\   \comi v^{\gamma\over2}]   M^{j-1} f}_{(2,0)}\\
		  &  \leq  \norm{  \partial_v\comi v^{\gamma\over2}   M^{j} f}_{(2,0)}+\norm{ [M, \comi v^{\gamma\over2}]   \partial_v M^{j-1} f}_{(2,0)}+\norm{ [ \partial_v, \, [M, \comi v^{\gamma\over2}] \,]  M^{j-1} f}_{(2,0)}\\
		  &  \leq  \norm{  \partial_v\comi v^{\gamma\over2}   M^{j} f}_{(2,0)}+C\norm{ M M^{j-1} f}_{(2,0)}+C\norm{ M^{1/2} M^{j-1} f}_{(2,0)} \\
		  &  \leq C \norm{  \psi(v,D_v)  M^{j} f}_{(2,0)} +C\norm{ \psi(v,D_v)  M^{j-1} f}_{(2,0)}.
		\end{align*}
Thus, combining the above estimates yields \begin{eqnarray*}
	\norm{M^{1/2} \partial_v\comi v^{\gamma\over2}   M^{j-1} f}_{(2,0)}  \leq C \norm{  \psi(v,D_v)   M^{j} f}_{(2,0)}^{1/2}\norm{  \psi(v,D_v)  M^{j-1} f}_{(2,0)}^{1/2} +C\norm{  \psi(v,D_v)   M^{j-1} f}_{(2,0)}.
\end{eqnarray*}
Substituting the above inequality into \eqref{ueti} yields \eqref{copre} for $\mathcal P=v\wedge\partial_v.$ The treatment for $\mathcal P= \partial_v $ or $\mathcal P=v $ is similar and simpler, so we omit it for brevity.

Next  we will proceed    to prove \eqref{diffest} and \eqref{diffest+}.  The first two steps are devoted to proving \eqref{diffest} by induction on $\ell$. and the last one to proving \eqref{diffest+}.

{\bf (i)  Initial step}.  This step and the next one are devoted to proving \eqref{diffest} by induction on $\ell.$  For any integer $q\in\mathbb Z_+$ with   $2q\leq 2k-2$, we have $q\leq k-1$ and thus  using \eqref{copre}
and the inductive assumption \eqref{ass:ind} yields
\begin{align*}
&	\bigg( \int_{t_0}^1\norm{\comi v^{\gamma\over2}  M^q \mathcal P f}_{(2,0)}^2dt\bigg)^{1\over2}  \leq  C 	 \bigg(\int_{t_0}^1\Big(\norm{\psi(v,D_v)M^q  f}_{(2,0)}^2 +q^4 	  \norm{\psi(v,D_v)  M^{q-1}  f}_{(2,0)}^2\Big)dt\bigg)^{1\over2}\\
	  & \leq  C     \frac{\epsilon }{(2q+1)^3}  C_*^{ 2q} (2q)!+Cq^2    \frac{\epsilon }{(2q+1)^3}  C_*^{ 2q-2} (2q-2)! \leq   C     \frac{\epsilon }{(2q+1)^3}  C_*^{ 2q} (2q)!.
\end{align*}
We have proven the validity of \eqref{diffest} for  any pair $(\ell, q)\in\mathbb Z_+^2$ with $\ell=0$ and $\ell+2q=2q\leq 2k-2$.

 {\bf (ii) Inductive step}.  Let $\ell\geq 1$ be a given integer, and suppose the following estimate
	\begin{equation}\label{N}
	\bigg( \int_{t_0}^1\norm{\comi v^{\gamma/2}\Lambda^N  M^q \mathcal P f}_{(2,0)}^2dt\bigg)^{1/2}\leq  C     \frac{\epsilon }{(N+2q+1)^3}  C_*^{N+ 2q} (N+2q)!
	 	\end{equation}
   holds true for any  pair    $(N,q)\in\mathbb Z_+^2$   with  $N\leq \ell-1$     and $N+2q\leq 2k-2$.  We will show in this step that  the above estimate \eqref{N} still holds for  any  pair   $(N,q)\in \mathbb Z_+^2$  with  $N=\ell$ and $N+2q=\ell+2q\leq 2k-2$.
  In the following discussion, let  $q$ be any integer   satisfying that  $\ell+2q\leq 2k-2$ with $\ell\geq 1$ given.

   We    first consider the case when  $\ell$ is odd.  Then  the assumption $\ell+2p\leq 2k-2$ implies
    \begin{equation}\label{2k3}
    	\ell+2p\leq 2k-3.
    \end{equation}
       Observe $[\Lambda, \comi v^{\gamma/2}]$ is just the first order derivatives of $\comi v^{\gamma/2}$. Then using \eqref{uppforvgamma} gives
       \begin{multline}\label{eqeti}
\bigg(	 \int_{t_0}^1\norm{\comi v^{\gamma/2}\Lambda^\ell  M^q \mathcal P f}_{(2,0)}^2dt\bigg)^{1/2}\\
	 \leq  \bigg( \int_{t_0}^1\norm{\Lambda^1\comi v^{\gamma/2}\Lambda^{\ell-1}  M^q \mathcal P f}_{(2,0)}^2dt\bigg)^{1/2}+C\bigg(\int_{t_0}^1\norm{    \Lambda^{\ell-1}  M^q \mathcal P f}_{(2,0)}^2dt\bigg)^{1/2}.
\end{multline}
Moreover for the first term on the right hand side,
\begin{equation}\label{leone}
\begin{aligned}
	& \bigg(\int_{t_0}^1\norm{\Lambda^1\comi v^{\gamma/2}\Lambda^{\ell-1}  M^q \mathcal P f}_{(2,0)}^2dt\bigg)^{1/2}\\
	& \leq  \bigg(\int_{t_0}^1\norm{M\comi v^{\gamma/2}\Lambda^{\ell-1}  M^q \mathcal P f}_{(2,0)}\norm{\comi v^{\gamma/2}\Lambda^{\ell-1}  M^q \mathcal P f}_{(2,0)}dt\bigg)^{1/2}\\
	& \leq C \bigg(\int_{t_0}^1\norm{ \comi v^{\gamma/2}\Lambda^{\ell-1}  M^{q+1} \mathcal P f}_{(2,0)}^2dt\bigg)^{1/4}\bigg(\int_{t_0}^1\norm{\comi v^{\gamma/2}\Lambda^{\ell-1}  M^q \mathcal P f}_{(2,0)}^2dt\bigg)^{1/4}\\
	&\quad +\bigg( \int_{t_0}^1\norm{\comi v^{\gamma/2}\Lambda^{\ell-1}  M^q \mathcal P f}_{(2,0)}^2dt\bigg)^{1/2},
\end{aligned}
\end{equation}
where in the last inequality  we have used the fact that
\begin{eqnarray*}
	\norm{[M,\ \comi v^{\gamma/2}] g}_{(2,0)}\leq C \norm{M^{1/2}g}_{(2,0)}\leq C \inner{\norm{M g}_{(2,0)} +C \norm{ g}_{(2,0)} }
\end{eqnarray*}
due to \eqref{uppforvgamma}. As a result, combining the above inequalities gives
\begin{equation}\label{oddintell}
	\begin{aligned}
	& \bigg(\int_{t_0}^1\norm{ \comi v^{\gamma/2}\Lambda^{\ell}  M^q \mathcal P f}_{(2,0)}^2dt \bigg)^{1/2}\leq C \bigg(\int_{t_0}^1\norm{\comi v^{\gamma/2}\Lambda^{\ell-1}  M^q \mathcal P f}_{(2,0)}^2dt\bigg)^{1/2}\\
	&\quad \qquad\qquad+ C \bigg(\int_{t_0}^1\norm{ \comi v^{\gamma/2}\Lambda^{\ell-1}  M^{q+1}\mathcal P  f}_{(2,0)}^2dt\bigg)^{1\over4}\bigg(\int_{t_0}^1\norm{\comi v^{\gamma/2}\Lambda^{\ell-1}  M^q \mathcal P f}_{(2,0)}^2dt\bigg)^{1\over4}\\
	& \leq C \frac{\epsilon }{(2q+\ell)^3}C_*^{ 2q+\ell-1} (2q+\ell-1)! \\
	&\quad 	+ C\bigg[\frac{\epsilon }{(2q+\ell+2)^3}  C_*^{ 2q+\ell+1} (2q+\ell+1)!\bigg]^{1/2}\bigg[\frac{\epsilon }{(2q+\ell)^3}C_*^{ 2q+\ell-1} (2q+\ell-1)! \bigg]^{1/2} 	\\
	&\leq C \frac{\epsilon }{(2q+\ell+1)^3}C_*^{ 2q+\ell} (2q+\ell)!,
	\end{aligned}
\end{equation}
   the second inequality    using  the inductive assumption \eqref{N}  for $N=\ell-1$ by observing \eqref{2k3}.
 We have proven that \eqref{N}  holds true for any pair $(N,q)$ with  $N=\ell$ odd and   $N+2q\leq 2k-2$.

Next we deal with the case when $\ell$ is even.  Similarly as in \eqref{eqeti} and \eqref{leone}, we have
   \begin{align*}
&\bigg(\int_{t_0}^1\norm{ \comi v^{\gamma/2}\Lambda^{\ell}  M^q \mathcal P f}_{(2,0)}^2dt\bigg)^{1/2}\\
&\leq \bigg(\int_{t_0}^1\norm{\Lambda^2 \comi v^{\gamma/2}\Lambda^{\ell-2}  M^q \mathcal P f}_{(2,0)}^2dt\bigg)^{1/2}+C\bigg(\int_{t_0}^1\norm{\Lambda^1 \Lambda^{\ell-2}  M^q \mathcal P f}_{(2,0)}^2dt\bigg)^{1/2}\\
&\leq C\bigg(\int_{t_0}^1\norm{ \comi v^{\gamma\over2} \Lambda^{\ell-2}  M^{q+1} \mathcal P f}_{(2,0)}^2dt\bigg)^{1/2}	 +C\bigg(\int_{t_0}^1\norm{\comi v^{\gamma\over2}\Lambda^{\ell-2}  M^q \mathcal P f}_{(2,0)}^2dt\bigg)^{1/2}\\
&\leq C\frac{\epsilon }{(2q+\ell+1)^3}C_*^{ 2q+\ell} (2q+\ell)!,	
\end{align*}
the last line using again the inductive assumption \eqref{N} for $N=\ell-2$ since $\ell-2+2(q+1)\leq \ell+2q\leq 2k-2.$  Combining the above estimate for even integer $\ell$  with the previous \eqref{oddintell} for odd integer $\ell$, we conclude  \eqref{N}  holds true for  any  pair    $(N,q)\in\mathbb Z_+^2$  with  $N=\ell$ and $N+2q=\ell+2q\leq 2k-2$.  Thus the first assertion \eqref{diffest} follows.

{\bf (iii) The remaining case of  \boldmath $\ell=1$ and $q=k-1$}.  It remains to prove the second assertion \eqref{diffest+}. Applying \eqref{eqeti} and \eqref{leone} for $\ell=1$ and $q=k-1$, we have
\begin{align*}
	&\bigg( \int_{t_0}^1\norm{  v^{\gamma\over2}\Lambda^1  M^{k-1}\mathcal P f}_{(2,0)}^2dt\bigg)^{1\over2} \leq \bigg( \int_{t_0}^1\norm{\Lambda^1\comi v^{\gamma\over2}  M^{k-1}\mathcal P f}_{(2,0)}^2dt\bigg)^{1\over2}+C\bigg( \int_{t_0}^1\norm{   M^{k-1}\mathcal P f}_{(2,0)}^2dt\bigg)^{1\over2} \\
	& \leq   C\bigg( \int_{t_0}^1\norm{ \comi v^{\gamma\over2}  M^{k} \mathcal P f}_{(2,0)}^2dt\bigg)^{1\over4}\bigg( \int_{t_0}^1\norm{ \comi v^{\gamma\over2}  M^{k-1} \mathcal P f}_{(2,0)}^2dt\bigg)^{1\over4}+C \bigg( \int_{t_0}^1\norm{  M^{k-1} \mathcal P f}_{(2,0)}^2dt\bigg)^{1\over2} \\
	& \leq   C\bigg( \int_{t_0}^1\norm{ \comi v^{\gamma\over2}  M^{k} \mathcal P f}_{(2,0)}^2dt\bigg)^{1\over4}\bigg[  \frac{\epsilon }{(2k-1)^3}C_*^{ 2k-2} (2k-2)!\bigg]^{1\over 2}+C \frac{\epsilon }{(2k-1)^3}C_*^{ 2k-2} (2k-2)! \\
	& \leq   C\bigg( \int_{t_0}^1\norm{\psi(v,D_v)  M^{k} f}_{(2,0)}^2dt\bigg)^{1\over4}\bigg[  \frac{\epsilon }{(2k-1)^3}C_*^{ 2k-2} (2k-2)!\bigg]^{1\over 2}+C \frac{\epsilon }{(2k+1)^3}C_*^{ 2k-2} (2k-1)!,
\end{align*}
where  in the third  inequality we have used \eqref{diffest} and the last line follows by combining \eqref{copre} with \eqref{diffest}.      Then we   obtain  the second assertion   \eqref{diffest+}, completing the proof of Lemma \ref{lem:tec2}.
 \end{proof}

\begin{lemma}
	[Commutator between $M^k$ and $L_1$] \label{lem:L1} Suppose the  inductive assumption \eqref{ass:ind} in Proposition \ref{prop:com} holds.  Then
	\begin{multline*}
	\Big|	\int_{t_0}^1 \Big(M^kL_1(f, f)-L_1\big(f, M^k f\big), M^kf\Big)_{(2,0)}dt \Big|\\
	\leq \inner{ \delta +\epsilon C}\bigg[ \Big(\sup_{t_0\leq t\leq 1} \norm{M^{k} f }_{(2,0)}\Big)^2+  \int_{t_0}^1\norm{ \psi(v,D_v)  M^{k}   f}_{(2,0)}^2dt   \bigg]
	  +  C_\delta \bigg[ \frac{\epsilon^2 }{(2k+1)^3}  C_*^{ 2k} (2k)!   \bigg]^2,
	\end{multline*}
	where $\delta>0$ is an arbitrarily small constant, and $C_\delta$ is a constant depending  on $\delta$.
\end{lemma}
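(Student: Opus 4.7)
The strategy is to expand $M^k L_1(f,f)$ via the Leibniz-type formula of Lemma~\ref{lem: leibniz}, identify the diagonal term as $L_1(f, M^k f)$, and bound the remainder by a trilinear estimate in the spirit of \eqref{trili} combined with the quantitative bounds of Corollary~\ref{cor:tec1} and Lemma~\ref{lem:tec2}. Concretely, each bilinear piece of $L_1(g,h)$ in \eqref{defl} has the schematic form $\mathcal{D}_1\bigl(\Phi(g)\cdot\mathcal{D}_2 h\bigr)$, where $\mathcal{D}_1,\mathcal{D}_2$ are first-order differential operators in $(x,v)$ that commute with $M$ (since $M$ is a Fourier multiplier in $(x_1,v_1)$ alone), and $\Phi(g)\in\{a_g,\boldsymbol{B}_g,M_{i,j,g}\}$ is a $v$-only convolution of $|v|^\gamma$ against an analytic Maxwellian-weighted factor $\phi(v) g$, with $\phi$ satisfying \eqref{upgiven}. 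Applying $M^k$ to the product $\Phi(f)\cdot\mathcal{D}_2 f$ via Lemma~\ref{lem: leibniz} and extracting the $j=0$ term (which equals $\Phi(f)\cdot\mathcal{D}_2 M^k f$ and reassembles to $L_1(f,M^kf)$) produces
\begin{equation*}
M^k L_1(f, f) - L_1(f, M^k f) \;=\; \sum_{j=1}^{2k}\mathcal{D}_1\!\!\!\sum_{\substack{\ell+2p=j\\ \ell+2q=2k-j}}\!\! c^{k,j}_{\ell,p,q}\,\bigl(\Lambda^\ell M^p \Phi(f)\bigr) \cdot \bigl(\Lambda^\ell M^q \mathcal{D}_2 f\bigr).
\end{equation*}

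To control the coefficient $\Lambda^\ell M^p \Phi(f)$, I note that $\Lambda_i$ contains $\partial_{v_1}$, which does not directly commute with the $v$-convolution; each such derivative can however be transferred by integration by parts in $v_*$ onto the factor $\phi f$, producing at worst multiplication by $v_{*1}$ and an analytic derivative of $\phi$. Iterating yields a finite decomposition $\Lambda^\ell M^p\Phi(f) = \sum_\alpha |v|^\gamma *_v\bigl(\tilde\phi_\alpha(v) \Lambda^{\ell_\alpha} M^p f\bigr)$ with $\ell_\alpha\leq \ell$ and analytic weights $\tilde\phi_\alpha$ satisfying \eqref{upgiven}. Young's inequality then gives $\|\Lambda^\ell M^p\Phi(f)\|_{L^\infty_v H^2_x}\leq C\comi v^\gamma\sum_\alpha\|\Lambda^{\ell_\alpha} M^p(\tilde\phi_\alpha f)\|_{(2,0)}$, which Corollary~\ref{cor:tec1} bounds by $C\epsilon\, C_*^{\ell+2p}(\ell+2p)!/(\ell+2p+1)^3$ as long as $\ell+2p\leq 2k-2$.

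Pairing the Leibniz expansion with $M^k f$ in the $(2,0)$-inner product and integrating by parts to transfer $\mathcal{D}_1^*$ onto $M^k f$ yields a trilinear expression that, after the $L_v^2$-trilinear bound \eqref{trili} and the Sobolev embedding $H^2_x\hookrightarrow L^\infty_x$ applied to the coefficient factor, is dominated by
\begin{equation*}
C\, c^{k,j}_{\ell,p,q}\!\sup_{t_0\leq t\leq 1}\!\|\Lambda^\ell M^p\Phi(f)\|_{L^\infty_v H^2_x}\!\bigg(\!\int_{t_0}^1\!\|\psi(v,D_v)\Lambda^\ell M^q f\|_{(2,0)}^2 dt\!\bigg)^{\!\!1/2}\!\!\bigg(\!\int_{t_0}^1\!\|\psi(v,D_v) M^k f\|_{(2,0)}^2 dt\!\bigg)^{\!\!1/2}\!\!.
\end{equation*}
Using the combinatorial identity $\sum_{\ell+2p=j,\,\ell+2q=2k-j} c^{k,j}_{\ell,p,q}=\binom{2k}{j}$ from \eqref{sum} and $\binom{2k}{j} j!(2k-j)!=(2k)!$, the summation over the \emph{generic} range $\ell+2p\leq 2k-2$ and $\ell+2q\leq 2k-2$ produces exactly the $C_\delta\bigl[\epsilon^2 C_*^{2k}(2k)!/(2k+1)^3\bigr]^2$-contribution claimed, after a small-parameter AM--GM to split the third factor and absorb the free $\|\psi M^kf\|_{(2,0)}$-norm.

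The main difficulty is the treatment of the \emph{boundary} sub-cases where the generic bounds fail: $(\ell, p)\in\{(0,k),(1,k-1)\}$ and their symmetric counterparts in $q$. Here one must invoke the second assertions of Corollary~\ref{cor:tec1} and Lemma~\ref{lem:tec2}, which extract either the explicit $\sup_t\|M^kf\|_{(2,0)}$ or the interpolated $\|M^kf\|_{(2,0)}^{1/2}$ at the cost of a single $\epsilon$-factor. Combined with the third factor $\|\psi(v,D_v) M^kf\|_{(2,0)}$ coming from the trilinear estimate, these produce precisely the $(\delta+\epsilon C)\bigl[\sup\|M^kf\|_{(2,0)}^2+\int\|\psi(v,D_v) M^kf\|_{(2,0)}^2\,dt\bigr]$-piece of the target bound, absorbable on the left by Young's inequality with a small parameter $\delta>0$ (the $\epsilon$ arising from $\sup\|f\|_{(2,0)}\leq\epsilon$). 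The delicate point is verifying that the square-root interpolation in Corollary~\ref{cor:tec1} and the fourth-root interpolation in \eqref{diffest+} combine to give a total exponent of exactly $1$ on each of the absorbable norms $\sup\|M^kf\|_{(2,0)}$ and $\int\|\psi M^kf\|_{(2,0)}^2\,dt$, so that no residual $(2k)!$-growth remains uncompensated by $C_*^{2k}$ in the sub-case $(\ell,q)=(1,k-1)$; this requires carefully pairing the two Young-type splittings so that the exponent book-keeping closes.
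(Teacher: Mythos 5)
The proposal has a genuine gap at its foundation. You assert that the first-order operators $\mathcal D_1,\mathcal D_2$ appearing in the split form of $L_1$ in \eqref{defl} ``commute with $M$ (since $M$ is a Fourier multiplier in $(x_1,v_1)$ alone).'' This is false for the pieces involving $v\wedge\partial_v$: while $\partial_{v_i}$ and $\partial_{x_i}$ commute with the constant-coefficient operator $M$, the vector field $v\wedge\partial_v$ involves multiplication by $v_1$ (e.g.\ $v_1\partial_{v_2}-v_2\partial_{v_1}$ and $v_3\partial_{v_1}-v_1\partial_{v_3}$), and $[\partial_{v_1},v_1]=1$ makes $[M,v\wedge\partial_v]$ nonzero. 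Indeed the paper computes this explicitly in \eqref{comwedge}: $[M,v\wedge\partial_v]=2\sum_{i\le 2}\Lambda_i\big([\Lambda_i,v]\wedge\partial_v\big)$, with $[\Lambda_i,v]=c_i\big((t-t_0)^{1/2},0,0\big)\ne 0$.

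As a result, your starting identity
$M^kL_1(f,f)-L_1(f,M^kf)=\sum_{j\ge 1}\mathcal D_1\big(\Lambda^\ell M^p\Phi(f)\cdot\Lambda^\ell M^q\mathcal D_2 f\big)$
is not correct: it only captures the commutator with the coefficients $\Phi(f)$, and omits entirely the commutators with the differential operators themselves. In the paper's decomposition these missing pieces are $S_{1,1}=\tfrac12[M^k,v\wedge\partial_v]\cdot a_f(v\wedge\partial_v)$ and the sub-piece $\tfrac12(v\wedge\partial_v)\cdot a_f[M^k,v\wedge\partial_v]$ of $S_{1,2}$. They carry a factor $k$ (from $[M^k,v\wedge\partial_v]=kM^{k-1}[M,v\wedge\partial_v]$) and a structural $\Lambda_i$ (so roughly a power $M^{k-1/2}$), and their control hinges on \eqref{comwedge} together with the interpolation estimate \eqref{diffest+} of Lemma~\ref{lem:tec2}; they are not negligible. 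The rest of your outline — the Leibniz expansion via Lemma~\ref{lem: leibniz}, the boundary-index bookkeeping $(\ell,p)\in\{(0,k),(1,k-1)\}$, and the use of Corollary~\ref{cor:tec1} and \eqref{sum} to sum the generic range — matches the paper's treatment of $[M^k,a_f]$, but the argument as written cannot close without accounting for the nontrivial commutators with $v\wedge\partial_v$.
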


\begin{proof}
	 In view of the representation of $L_1$ given in  Proposition \ref{proprep} we may write
\begin{eqnarray*}
M^kL_1(f, f)-L_1\big(f, M^k f\big)=\sum_{1\leq j\leq 3} S_{j}f,
\end{eqnarray*}
where
\begin{equation}\label{ss}
\left\{
\begin{aligned}
S_{1}&=\frac{1}{2}\big[M^k, \ \inner{v\wedge \partial_v} \cdot a_f
\inner{v\wedge \partial_v}\big],
\\
S_{2}&=\sum_{1\leq i, j\leq 3} \big[M^k, \ \partial_{v_i} M_{i,j,f} \partial_{v_j}\big],
\\
S_{3}&=\frac{1}{2}\big[M^k, \ \big(\partial_v\wedge {\boldsymbol{B}_f \big)}
\cdot \inner{v\wedge \partial_v}\big] -\frac{1}{2}\big[M^k, \ \inner{v\wedge \partial_v} \cdot \big( {\boldsymbol{B}_f} \wedge \partial_v\big)\big],
\end{aligned}
\right.
\end{equation}
with $a_f, \boldsymbol{B}_f$ and $M_{i,j,f}$ defined in \eqref{ag}-\eqref{M}.

 We split   $S_{1}$ as
\begin{equation}\label{s12s11}
\begin{aligned}
S_{1}&
=\frac{1}{2}\big[M^k, \inner{v\wedge \partial_v}\big]\cdot a_f
\inner{v\wedge \partial_v}+
\frac{1}{2} \inner{v\wedge \partial_v}\cdot\big[M^k, a_f\inner{v\wedge \partial_v}\big]
\\
&:= S_{1,1}+S_{1,2}.
\end{aligned}
\end{equation}
We first deal with   $S_{1,2}$  and conclude that, for any $\delta>0$,
	\begin{multline}\label{fs12}
	  \int_{t_0}^1\big|\big(S_{1,2}f,\ M^kf\big)_{(2,0)}\big|dt\\
	   \leq  \inner{ \delta +\epsilon C}\bigg[\Big( \sup_{t_0\leq t\leq 1} \norm{M^{k} f }_{(2,0)}\Big)^2+  \int_{t_0}^1\norm{ \psi(v,D_v)  M^{k}   f}_{(2,0)}^2dt   \bigg]+  C_\delta \bigg[ \frac{\epsilon^2 }{(2k+1)^3}  C_*^{ 2k} (2k)!  \bigg]^2.
\end{multline}
To prove \eqref{fs12},
 we use the fact that
 \begin{align*}
 	\inner{v\wedge \partial_v}\cdot\big[M^k, a_f\inner{v\wedge \partial_v}\big]&=\inner{v\wedge \partial_v}\cdot\big[M^k, a_f   \big] \inner{v\wedge \partial_v}+\inner{v\wedge \partial_v}\cdot a_f \big[M^k,   v \wedge \partial_v \big],
 \end{align*}
to get
 \begin{equation}\label{s12e}
\begin{aligned}
	& \int_{t_0}^1\big|\big(S_{1,2}f,\ M^kf\big)_{(2,0)}\big|dt =\frac12\int_{t_0}^1\big|\big(\inner{v\wedge \partial_v}\cdot\big[M^k, a_f\inner{v\wedge \partial_v}\big]f,\ M^kf\big)_{(2,0)}\big|dt\\
	&  \leq \bigg(\int_{t_0}^1 \norm{ \comi v^{-{\gamma\over2}}\big[M^k, a_f \big] \inner{v\wedge \partial_v}f }_{(2,0)}^2dt\bigg)^{1\over2}\bigg(\int_{t_0}^1\norm{\comi v^{\gamma \over 2}\inner{v\wedge \partial_v}M^kf}_{(2,0)}^2dt\bigg)^{1\over2}\\
	&\quad+\bigg(\int_{t_0}^1 \norm{ \comi v^{-{\gamma\over2}}a_f  \big[M^k, v \wedge \partial_v\big] f }_{(2,0)}^2dt\bigg)^{1\over2}\bigg(\int_{t_0}^1\norm{\comi v^{\gamma \over 2}\inner{v\wedge \partial_v}M^kf}_{(2,0)}^2dt\bigg)^{1\over2}.
	 \end{aligned}
\end{equation}
We  deal with the first  term on the right hand side of  \eqref{s12e}.    In view of \eqref{ag}, we can verify directly
\begin{equation}\label{uppaf}
\forall \ v\in\mathbb R^3,\quad |  a_f(v)|\leq C \comi v^\gamma   \norm{f}_{L_v^2}\ \textrm{ and }\ 	| \Lambda^\ell M^p a_f(v)|\leq C \comi v^\gamma   \norm{\Lambda^\ell M^p\big(\sqrt \mu f\big)}_{L_v^2},
\end{equation}
which,  with Lemma \ref{lem: leibniz} as well as Remark  \ref{rem:nota}, implies
\begin{equation}\label{srt}
\begin{aligned}
	&  \norm{ \comi v^{-{\gamma/2}}\big[M^k, a_f \big] \inner{v\wedge \partial_v}f }_{(2,0)}\\
	&\leq  \sum_{j=1}^{2k} \sum_{\stackrel{ \ell+2p=j}{\ell+2q=2k-j}}c^{k, j}_{\ell, p, q}      \norm{ \Lambda^\ell  M^p\big( \sqrt \mu f \big)}_{(2,0)}        \norm{ \comi v^{\gamma/2}  \Lambda^{\ell } M^q  \inner{v\wedge \partial_v}f }_{(2,0)}.
\end{aligned}
\end{equation}
Thus
\begin{equation}\label{j1j4}
\begin{aligned}
	&\bigg(\int_{t_0}^1 \norm{ \comi v^{-{\gamma\over2}}\big[M^k, a_f \big] \inner{v\wedge \partial_v}f }_{(2,0)}^2dt\bigg)^{1/2}\\
	&\leq  \sum_{j=1}^{2k}\sum_{\stackrel{ \ell+2p=j}{\ell+2q=2k-j}}c^{k, j}_{\ell, p, q}    \sup_{t_0\leq t\leq 1} \norm{\Lambda^\ell  M^p(\sqrt\mu f)}_{(2,0)}\bigg(\int_{t_0}^1 \norm{ \comi v^{\gamma/2}  \Lambda^{\ell } M^q \big(\inner{v\wedge \partial_v}f\big)}_{(2,0)}^2dt\bigg)^{1/2}  \\
	&:=\sum_{1\leq i\leq 4}J_i
	\end{aligned}
	\end{equation}
	with
	\begin{align*}
		J_1&=\sum_{j=2}^{2k-2}\sum_{\stackrel{ \ell+2p=j}{\ell+2q=2k-j}}c^{k, j}_{\ell, p, q}    \sup_{t_0\leq t\leq 1} \norm{\Lambda^\ell  M^p(\sqrt\mu f)}_{(2,0)}\bigg(\int_{t_0}^1 \norm{ \comi v^{\gamma/2}  \Lambda^{\ell } M^q \big(\inner{v\wedge \partial_v}f\big)}_{(2,0)}^2dt\bigg)^{1/2},\\
		J_2&= \sum_{\stackrel{ \ell+2p=2k}{\ell+2q=0}}c^{k, 2k}_{\ell, p, q}    \sup_{t_0\leq t\leq 1} \norm{  \Lambda^\ell M^p(\sqrt\mu f)}_{(2,0)}\bigg(\int_{t_0}^1 \norm{ \comi v^{\gamma/2}  \Lambda^\ell M^q   \inner{v\wedge \partial_v}f }_{(2,0)}^2dt\bigg)^{1/2},\\
		J_3&= \sum_{\stackrel{ \ell+2p=2k-1}{\ell+2q=1}}c^{k, 2k-1}_{\ell, p, q}    \sup_{t_0\leq t\leq 1} \norm{\Lambda^\ell  M^p(\sqrt\mu f)}_{(2,0)}\bigg(\int_{t_0}^1 \norm{ \comi v^{\gamma/2}  \Lambda^{\ell } M^q \big(\inner{v\wedge \partial_v}f\big)}_{(2,0)}^2dt\bigg)^{1/2},\\
		J_4&= \sum_{\stackrel{ \ell+2p=1}{\ell+2q=2k-1}}c^{k, 1}_{\ell, p, q}    \sup_{t_0\leq t\leq 1} \norm{\Lambda^\ell  M^p(\sqrt\mu f)}_{(2,0)}\bigg(\int_{t_0}^1 \norm{ \comi v^{\gamma/2}  \Lambda^{\ell } M^q \big(\inner{v\wedge \partial_v}f\big)}_{(2,0)}^2dt\bigg)^{1/2}.
	\end{align*}
To derive the upper bounds of $J_i,1\leq i\leq 4,$ we need the following fact:
\begin{equation}
	\label{FouGauss}
	\forall \ \beta\in\mathbb Z_+^3,\quad \norm{\partial_v^\beta \mu^{1/2}}_{L_v^2}+\norm{\partial_v^\beta \mu}_{L_v^2} \leq 2\norm{\eta^\beta e^{-\eta^2/4}}_{L_\eta^2}\leq 16^{\abs\beta+1}\abs\beta!.
\end{equation}
This enables us to use the estimates in Corollary \ref{cor:tec1} and Lemma \ref{lem:tec2}, to compute
\begin{align*}
	J_1&\leq C\sum_{j=2}^{2k-2}\sum_{\stackrel{ \ell+2p=j}{\ell+2q=2k-j}}c^{k, j}_{\ell, p, q}       \frac{\epsilon  }{(\ell+2p+1)^3}  C_*^{ \ell+2p} (\ell+2p)!  \frac{\epsilon}{(2q+\ell+1)^3}C_*^{ 2q+\ell} (2q+\ell)!\\
	&\leq CC_*^{2k}\sum_{j=2}^{2k-2} j! (2k-j!)    \frac{\epsilon^2 }{(j+1)^3(2k-j+1)^3} \sum_{\stackrel{ \ell+2p=j}{\ell+2q=2k-j}}c^{k, j}_{\ell, p, q}\\
		&\leq  CC_*^{2k}(2k)! \sum_{j=2}^{2k-2}   \frac{\epsilon^2 }{(j+1)^3(2k-j+1)^3}\leq C\frac{ \epsilon^2}{(2k+1)^3}  C_*^{2k}(2k)!, \end{align*}
the last line using \eqref{sum}.  By Lemma \ref{lem:tec0} and the inductive assumption \eqref{ass:ind} and the fact $c^{k,2k}_{0,k,0}=1$ in Remark \ref{rem:spec},
\begin{align*}
J_2&=\sum_{\stackrel{ \ell+2p=2k}{\ell+2q=0}}c^{k, 2k}_{\ell, p, q}    \sup_{t_0\leq t\leq 1} \norm{  \Lambda^\ell M^p(\sqrt\mu f)}_{(2,0)}\bigg(\int_{t_0}^1 \norm{ \comi v^{\gamma/2}  \Lambda^\ell M^q   \inner{v\wedge \partial_v}f }_{(2,0)}^2dt\bigg)^{1/2}\\
&=  c^{k, 2k}_{0, k, 0}    \sup_{t_0\leq t\leq 1} \norm{   M^k(\sqrt\mu f)}_{(2,0)}\bigg(\int_{t_0}^1 \norm{ \comi v^{\gamma/2}     \inner{v\wedge \partial_v}f }_{(2,0)}^2dt\bigg)^{1/2}\\
&\leq  C     \bigg[ \norm{M^{k} f }_{(2,0)}  +C   \frac{\epsilon }{(2k+1)^3}  C_*^{ 2k} (2k)!  \bigg]	 \epsilon\leq C\epsilon \norm{M^{k} f }_{(2,0)}  +C   \frac{\epsilon^2 }{(2k+1)^3}  C_*^{ 2k} (2k)!.
\end{align*}
  Using again the estimates in Corollary \ref{cor:tec1} and Lemma \ref{lem:tec2} and the    fact that  $c^{k, 2k-1}_{1, k-1, 0}=2k$ in  Remark \ref{rem:spec}, we have,  for any $\delta>0,$
\begin{eqnarray*}
	\begin{aligned}
		J_3&= \sum_{\stackrel{ \ell+2p=2k-1}{\ell+2q=1}}c^{k, 2k-1}_{\ell, p, q}    \sup_{t_0\leq t\leq 1} \norm{  \Lambda^\ell M^p(\sqrt\mu f)}_{(2,0)}\bigg(\int_{t_0}^1 \norm{ \comi v^{\gamma/2}  \Lambda^\ell M^q   \inner{v\wedge \partial_v}f }_{(2,0)}^2dt\bigg)^{1/2}\\
		&= c^{k, 2k-1}_{1, k-1, 0}    \sup_{t_0\leq t\leq 1} \norm{  \Lambda^1 M^{k-1}(\sqrt\mu f)}_{(2,0)}\bigg(\int_{t_0}^1 \norm{ \comi v^{\gamma/2}  \Lambda^1   \inner{v\wedge \partial_v}f }_{(2,0)}^2dt\bigg)^{1/2}\\
		&\leq C k \bigg[   \sup_{t_0\leq t\leq 1} \norm{M^{k} f }_{(2,0)} ^{1/2}\bigg(     \frac{\epsilon }{(2k+1)^3}  C_*^{ 2k-2} (2k-2)!\bigg)^{1/2}   +        \frac{\epsilon }{(2k+1)^3}  C_*^{ 2k-1} (2k-1)! \bigg] C_*\epsilon\\	
		&\leq    C    \sup_{t_0\leq t\leq 1} \norm{M^{k} f }_{(2,0)} ^{1/2}\bigg(     \frac{\epsilon^2 }{(2k+1)^3}  C_*^{ 2k} (2k)!\bigg)^{1/2}   +     C   \frac{\epsilon^2 }{(2k+1)^3}  C_*^{ 2k} (2k)! \\
		&\leq  \delta \sup_{t_0\leq t\leq 1} \norm{M^{k} f }_{(2,0)}  +  C_\delta  \frac{\epsilon^2 }{(2k+1)^3}  C_*^{ 2k} (2k)!.
	\end{aligned}
\end{eqnarray*}
Similarly, using Corollary \ref{cor:tec1} and Lemma \ref{lem:tec2} and observing  $c^{k, 1}_{1, 0, k-1}=2k$,
\begin{align*}
		J_4&= \sum_{\stackrel{ \ell+2p=1}{\ell+2q=2k-1}}c^{k, 1}_{\ell, p, q}    \sup_{t_0\leq t\leq 1} \norm{\Lambda^\ell  M^p(\sqrt\mu f)}_{(2,0)}\bigg(\int_{t_0}^1 \norm{ \comi v^{\gamma/2}  \Lambda^{\ell } M^q \big(\inner{v\wedge \partial_v}f\big)}_{(2,0)}^2dt\bigg)^{1/2}\\
		&=   c^{k, 1}_{1, 0, k-1}    \sup_{t_0\leq t\leq 1} \norm{\Lambda^1 (\sqrt\mu f)}_{(2,0)}\bigg(\int_{t_0}^1 \norm{ \comi v^{\gamma/2}  \Lambda^1 M^{k-1}  \inner{v\wedge \partial_v}f}_{(2,0)}^2dt\bigg)^{1/2}\\
		&\leq       Ck\bigg( \int_{t_0}^1\norm{ \comi v^{\gamma\over2} \inner{v\wedge \partial_v} M^{k}   f}_{(2,0)}^2dt\bigg)^{1\over4}\bigg[ \frac{\epsilon }{(2k-1)^3}C_*^{ 2k-2} (2k-2)!\bigg]^{1\over 2}  C_*\epsilon \\
		&\quad +  Ck\bigg[  \frac{\epsilon }{(2k-1)^3}C_*^{ 2k-2} (2k-1)!\bigg]  C_*\epsilon \\
		&\leq   C\bigg( \int_{t_0}^1\norm{ \psi(v,D_v)  M^{k}   f}_{(2,0)}^2dt\bigg)^{1\over4}\bigg[ \frac{\epsilon^2 }{(2k-1)^3}C_*^{ 2k} (2k)!\bigg]^{1\over 2}   +       C \frac{\epsilon^2 }{(2k-1)^3}C_*^{ 2k} (2k)! \\
		&\leq  \delta \bigg( \int_{t_0}^1\norm{ \psi(v,D_v)  M^{k}   f}_{(2,0)}^2dt\bigg)^{1/2}  +  C_\delta  \frac{\epsilon^2 }{(2k+1)^3}  C_*^{ 2k} (2k)!.
\end{align*}
Now substituting the above estimates on $J_1-J_4$ into \eqref{j1j4} yields, for any  $\delta>0,$
\begin{multline}\label{fieti}
	\bigg(\int_{t_0}^1 \norm{ \comi v^{-{\gamma\over2}}\big[M^k, a_f \big] \inner{v\wedge \partial_v}f }_{(2,0)}^2dt\bigg)^{1/2}\\
	\leq  \inner{ \delta +\epsilon C}\sup_{t_0\leq t\leq 1} \norm{M^{k} f }_{(2,0)}+  \delta \bigg( \int_{t_0}^1\norm{ \psi(v,D_v)  M^{k}   f}_{(2,0)}^2dt\bigg)^{1/2}  +  C_\delta  \frac{\epsilon^2 }{(2k+1)^3}  C_*^{ 2k} (2k)!.
\end{multline}
Next we deal with the second term on the right hand  side of \eqref{s12e}.
By \eqref{comwedge},
\begin{align*}
	 \big[M^k, v\wedge\partial_v\big]=kM^{k-1} \big[M, v\wedge\partial_v\big]=2kM^{k-1}\sum_{1\leq i\leq 2}\Lambda_i\big(\big[\Lambda_i, v\big]\wedge\partial_v\big),
\end{align*}
which, with the first estimate in  \eqref{uppaf},  implies
\begin{align*}
	&\bigg(\int_{t_0}^1 \norm{ \comi v^{-{\gamma\over2}}a_f  \big[M^k, v\wedge\partial_v\big] f }_{(2,0)}^2dt\bigg)^{1\over2}\leq C\big(\sup_{t_0\leq t\leq 1}\norm{f(t)}_{(2,0)}\big)\bigg(\int_{t_0}^1 \norm{ \comi v^{ {\gamma\over2}} \big[M^k, v\wedge\partial_v\big]f }_{(2,0)}^2dt\bigg)^{1\over2}\\
	&\leq Ck\big(\sup_{t_0\leq t\leq 1}\norm{f(t)}_{(2,0)}\big) \bigg(\int_{t_0}^1 \norm{ \comi v^{ {\gamma\over2}}\Lambda^1  M^{k-1} \partial_vf }_{(2,0)}^2dt\bigg)^{1\over2}\\
	&\leq C k\epsilon \bigg( \int_{t_0}^1\norm{ \comi v^{\gamma\over2} \partial_v M^{k} f}_{(2,0)}^2dt\bigg)^{1\over4}\bigg[C \frac{\epsilon }{(2k-1)^3}C_*^{ 2k-2} (2k-2)!\bigg]^{1\over 2}+Ck \frac{\epsilon^2 }{(2k-1)^3}C_*^{ 2k-2} (2k-1)!\\
	&\leq \delta \bigg( \int_{t_0}^1\norm{ \psi(v,D_v)  M^{k}   f}_{(2,0)}^2dt\bigg)^{1/2}  +  C_\delta  \frac{\epsilon^2 }{(2k+1)^3}  C_*^{ 2k} (2k)!
\end{align*}
for any $\delta>0,$ where in the third inequality we have used \eqref{diffest+} in Lemma \ref{lem:tec2}.  As a result, we substitute the above estimate and \eqref{fieti} into \eqref{s12e}, to obtain the  estimate \eqref{fs12}.

Next we deal with $S_{1,1}$ in \eqref{s12s11} and write
\begin{eqnarray*}
\begin{aligned}
	S_{1,1}&=\frac{1}{2}\big[M^k, \inner{v\wedge \partial_v}\big]\cdot a_f
\inner{v\wedge \partial_v}=\frac{k}{2}\big[M, \inner{v\wedge \partial_v}\big]\cdot M^{k-1} a_f
\inner{v\wedge \partial_v}\\
&=\frac{k}{2}\big[M, \inner{v\wedge \partial_v}\big]\cdot a_f M^{k-1}
\inner{v\wedge \partial_v} +\frac{k}{2}  \big[M, \inner{v\wedge \partial_v}\big]\cdot \big[M^{k-1}, a_f
\big ]( v\wedge \partial_v).
\end{aligned}
\end{eqnarray*}
This, with the fact that
\begin{eqnarray*}
	\big[M, \inner{v\wedge \partial_v}\big]= 2\sum_{1\leq i\leq 2}\big(\big[\Lambda_i ,  v\big]\wedge \partial_v\big)\Lambda_i
\end{eqnarray*}
in view of \eqref{comwedge},
yields
\begin{equation}\label{laineq}
\begin{aligned}
	 &\int_{t_0}^1\big|\big(S_{1,1}f,\ M^kf\big)_{(2,0)}\big|dt\\
	&\leq C k   \bigg(\int_{t_0}^1\norm{ \comi v^{-\gamma/2}  \Lambda^1 a_f M^{k-1}(v\wedge\partial_v)f}_{(2,0)}^2dt\bigg)^{1\over2}\bigg(\int_{t_0}^1\norm{\comi v^{\gamma/2}  \partial_v M^kf}_{(2,0)}^2dt\bigg)^{1\over2}\\
	&\quad + C k   \bigg(\int_{t_0}^1\norm{ \comi v^{-\gamma/2}  \Lambda^1 \big[ M^{k-1}, a_f\big](v\wedge\partial_v)f}_{(2,0)}^2dt\bigg)^{1\over2}\bigg(\int_{t_0}^1\norm{\comi v^{\gamma/2}  \partial_v M^kf}_{(2,0)}^2dt\bigg)^{1\over2}.
\end{aligned}
\end{equation}
Moreover, writing $\Lambda^1 (a_f g)= a_f \Lambda^1  g+(\Lambda^1  a_f)g $ and  then
using \eqref{uppaf},
\begin{align*}
	&k   \bigg(\int_{t_0}^1\norm{ \comi v^{-\gamma/2}  \Lambda^1 a_f M^{k-1}(v\wedge\partial_v)f}_{(2,0)}^2dt\bigg)^{1\over2}\\
	& \leq C k \big(\sup_{t_0\leq t\leq 1} \norm{f}_{(2,0)}\big)  \bigg(\int_{t_0}^1\norm{ \comi v^{ \gamma/2}  \Lambda^1M^{k-1}(v\wedge\partial_v)f}_{(2,0)}^2dt\bigg)^{1\over2}\\
	 &\quad   +  Ck\big(\sup_{t_0\leq t\leq 1} \norm{\Lambda^1(\sqrt\mu f)}_{(2,0)}\big)\bigg(\int_{t_0}^1\norm{ \comi v^{ \gamma/2}  M^{k-1}(v\wedge\partial_v)f}_{(2,0)}^2dt\bigg)^{1\over2}\\
	 &\leq Ck\epsilon \bigg( \int_{t_0}^1\norm{ \comi v^{\gamma\over2} (v\wedge\partial_v)M^{k} f}_{(2,0)}^2dt\bigg)^{1\over4}\bigg[\frac{\epsilon }{(2k-1)^3}C_*^{ 2k-2} (2k-2)!\bigg]^{1\over 2}\\
	 &\quad +Ck \frac{\epsilon^2 }{(2k-1)^3}C_*^{ 2k-2} (2k-1)! +C C_*k     \frac{ \epsilon^2   }{(2k-1)^3}C_*^{ 2k-2} (2k-2)!\\
	 &\leq C \bigg( \int_{t_0}^1\norm{\psi(v,D_v)M^{k} f}_{(2,0)}^2dt\bigg)^{1\over4}\bigg[\frac{\epsilon^2 }{(2k-1)^3}C_*^{ 2k-2} (2k)!\bigg]^{1\over 2}+C \frac{\epsilon^2 }{(2k+1)^3}C_*^{ 2k-1 } (2k)!\\
	 &\leq \delta   \bigg( \int_{t_0}^1\norm{\psi(v,D_v)M^{k} f}_{(2,0)}^2dt\bigg)^{1\over2}+C_\delta  \frac{\epsilon^2  }{(2k+1)^3}C_*^{ 2k} (2k)!
\end{align*}
the second inequality using the estimates \eqref{diffest}-\eqref{diffest+} in Lemma \ref{lem:tec2} as well as the first estimate in Corollary \ref{cor:tec1}.   As a result,
\begin{multline}\label{klat}
	k   \bigg(\int_{t_0}^1\norm{ \comi v^{-\gamma/2}  \Lambda^1 a_f M^{k-1}(v\wedge\partial_v)f}_{(2,0)}^2dt\bigg)^{1\over2}\bigg(\int_{t_0}^1\norm{\comi v^{\gamma/2}  \partial_v M^kf}_{(2,0)}^2dt\bigg)^{1\over2}\\
	\leq \delta     \int_{t_0}^1\norm{\psi(v,D_v)M^{k} f}_{(2,0)}^2dt +C_\delta \bigg[ \frac{\epsilon^2  }{(2k+1)^3}C_*^{ 2k} (2k)!\bigg]^2.
\end{multline}
We have the upper bound of the first term on the right side of \eqref{laineq}, and  it remains to deal with the second one. Similarly to \eqref{srt} we have
\begin{multline*}
	\norm{ \comi v^{-\gamma/2}  \Lambda^1 \big[ M^{k-1}, a_f\big](v\wedge\partial_v)f}_{(2,0)}\\
	\leq  \sum_{j=1}^{2k-2} \sum_{\stackrel{ \ell+2p=j}{\ell+2q=2k-2-j}}c^{k-1, j}_{\ell, p, q}      \norm{ \Lambda^{\ell+1}  M^p\big( \sqrt \mu f \big)}_{(2,0)}        \norm{ \comi v^{\gamma/2}  \Lambda^{\ell } M^q  \inner{v\wedge \partial_v}f }_{(2,0)}\\
	+   \sum_{j=1}^{2k-2} \sum_{\stackrel{ \ell+2p=j}{\ell+2q=2k-2-j}}c^{k-1, j}_{\ell, p, q}      \norm{ \Lambda^\ell  M^p\big( \sqrt \mu f \big)}_{(2,0)}        \norm{ \comi v^{\gamma/2}  \Lambda^{\ell +1} M^q  \inner{v\wedge \partial_v}f }_{(2,0)}.
\end{multline*}
Then repeating  the argument for treating $J_1$ in  \eqref{j1j4}  we conclude ,
using \ref{cor:tec1} and Lemma \ref{lem:tec2},
\begin{align*}
	&k   \bigg(\int_{t_0}^1\norm{ \comi v^{-\gamma/2}  \Lambda^1 \big[ M^{k-1}, a_f\big](v\wedge\partial_v)f}_{(2,0)}^2dt\bigg)^{1\over2}\\
	&\leq CC_*^{2k-2}k\sum_{j=2}^{2k-2} (j+1)! (2k-2-j!)    \frac{\epsilon^2 }{(j+2)^3(2k-j-1)^3} \sum_{\stackrel{ \ell+2p=j}{\ell+2q=2k-2-j}}c^{k-1, j}_{\ell, p, q}\\
	&\quad+ CC_*^{2k-2}k\sum_{j=2}^{2k-2} j! (2k-1-j!)    \frac{\epsilon^2 }{(j+1)^3(2k-j)^3} \sum_{\stackrel{ \ell+2p=j}{\ell+2q=2k-2-j}}c^{k-1, j}_{\ell, p, q}\\
		&\leq \big(  CC_*^{2k-2}k\big)(2k-2)! \sum_{j=2}^{2k-2}   \frac{\epsilon^2[ j+(2k-j)] }{(j+1)^3(2k-j-1)^3}\leq C\frac{ \epsilon^2}{(2k+1)^3}  C_*^{2k}(2k)!. \end{align*}
 Thus, for any $\delta>0,$
 \begin{multline*}
 	k   \bigg(\int_{t_0}^1\norm{ \comi v^{-\gamma/2}  \Lambda^1 \big[ M^{k-1}, a_f\big](v\wedge\partial_v)f}_{(2,0)}^2dt\bigg)^{1\over2}\bigg(\int_{t_0}^1\norm{\comi v^{\gamma/2}  \partial_v M^kf}_{(2,0)}^2dt\bigg)^{1\over2}\\
 	\leq\delta     \int_{t_0}^1\norm{\psi(v,D_v)M^{k} f}_{(2,0)}^2dt +C_\delta \bigg[ \frac{\epsilon^2  }{(2k+1)^3}C_*^{ 2k} (2k)!\bigg]^2.
 \end{multline*}
 Substituting the above estimate and  \eqref{klat} into \eqref{laineq} we obtain
\begin{align*}
	  \int_{t_0}^1\big|\big(S_{1,1}f,\ M^kf\big)_{(2,0)}\big|dt \leq  \delta    \int_{t_0}^1\norm{ \psi(v,D_v)  M^{k}   f}_{(2,0)}^2dt   +  C_\delta \bigg[ \frac{\epsilon^2  }{(2k+1)^3}  C_*^{ 2k} (2k)!  \bigg]^2.
\end{align*}
This with \eqref{fs12} as well as \eqref{s12s11} yields
\begin{multline}\label{las1et}
	  \int_{t_0}^1\big|\big(S_{1}f,\ M^kf\big)_{(2,0)}\big|dt 	 \\  \leq  \inner{ \delta +\epsilon C}\bigg[ \Big(\sup_{t_0\leq t\leq 1} \norm{M^{k} f }_{(2,0)}\Big)^2+  \int_{t_0}^1\norm{ \psi(v,D_v)  M^{k}   f}_{(2,0)}^2dt   \bigg]
	  +  C_\delta \bigg[ \frac{\epsilon^2 }{(2k+1)^3}  C_*^{ 2k} (2k)!   \bigg]^2.
\end{multline}
By the definition of $M_{i,j,f}$ and $\boldsymbol{B}_f$ given in \eqref{ag} and \eqref{M}, we can verify that similar to \eqref{uppaf}, the following estimates
\begin{eqnarray*}
	  |  \boldsymbol{B}_f(v)| \leq C \comi v^\gamma   \norm{f}_{L_v^2}\ \textrm{ and }\ 	| \Lambda^\ell M^p \boldsymbol{B}_f(v)|\leq C \comi v^\gamma   \norm{\Lambda^\ell M^p\big(v\sqrt \mu f\big)}_{L_v^2}
\end{eqnarray*}
and
\begin{eqnarray*}
		  | M_{i,j,f}(v)|\leq C \comi v^\gamma   \norm{f}_{L_v^2}\ \textrm{ and }\ 	| \Lambda^\ell M^p M_{i,j,f}(v)|\leq C \comi v^\gamma   \norm{\Lambda^\ell M^p\big((\delta_{i,j}\abs v^2-v_iv_j) \sqrt \mu f\big)}_{L_v^2}
\end{eqnarray*}
hold true for any $ v\in\mathbb R^3$.  This with \eqref{ss} enables us to repeat the above argument for estimating $S_1$ with slight modifications, to conclude that \eqref{las1et} still holds true with $S_1$ replaced by $S_j, 2\leq j\leq 3.$ The proof of Lemma \ref{lem:L1} is thus completed.
 \end{proof}

\begin{lemma}
	[Commutator between $M^k$ and $L_j$, $2\leq j\leq 6$] \label{lem: L26} Suppose the  inductive assumption \eqref{ass:ind} in Proposition \ref{prop:com} holds. Then
	\begin{multline*}
	\sum_{2\leq j\leq 6}\Big|	\int_{t_0}^1 \Big(M^kL_j(f, f)-L_j\big(f, M^k f\big), M^kf\Big)_{(2,0)}dt \Big|\\
	\leq \inner{ \delta +\epsilon C}\bigg[ \Big(\sup_{t_0\leq t\leq 1} \norm{M^{k} f }_{(2,0)}\Big)^2+  \int_{t_0}^1\norm{ \psi(v,D_v)  M^{k}   f}_{(2,0)}^2dt   \bigg]
	  +  C_\delta \bigg[ \frac{\epsilon^2 }{(2k+1)^3}  C_*^{ 2k} (2k)!    \bigg]^2,
	\end{multline*}
	where $\delta>0$ is an arbitrarily small constant.
\end{lemma}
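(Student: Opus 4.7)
The plan is to mimic the strategy developed in the proof of Lemma \ref{lem:L1} for $S_1$, applying it term-by-term to the representations of $L_j$, $2\leq j\leq 6$, given in Proposition \ref{proprep}. The crucial structural observation is that each $L_j$ is built from the same ingredients: the convolution quantities $a_f,\boldsymbol{B}_f,\boldsymbol{A}_f,M_{i,j,f},\rho_{i,j,f},\lambda_{i,j,f}$ on the one hand, and the first-order differential operators $v\wedge\partial_v,\partial_v$ together with the multiplication operator $v$ on the other hand. So in each case $M^kL_j(f,f)-L_j(f,M^kf)$ can be written, analogously to \eqref{ss}, as a finite sum of terms of the form
\begin{equation*}
\bigl[M^k,\ \mathcal{P}_1\, \Phi_f\, \mathcal{P}_2\bigr] f
\end{equation*}
with $\mathcal{P}_i\in\set{v\wedge\partial_v,\partial_v,v,\mathrm{Id}}$ and $\Phi_f\in\set{a_f,\boldsymbol{A}_f,\boldsymbol{B}_f,M_{i,j,f},\rho_{i,j,f},\lambda_{i,j,f}}$. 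The idea is to expand each such bracket as $\bigl[M^k,\Phi_f\bigr]\mathcal{P}_2+\mathcal{P}_1 \Phi_f [M^k,\mathcal{P}_2]+[M^k,\mathcal{P}_1]\Phi_f \mathcal{P}_2$ (up to commuting $\mathcal P_1$ through $M^k$ and similar easy manoeuvres).

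First I would establish pointwise bounds on the new coefficients in complete analogy with \eqref{uppaf}: from the definitions \eqref{ag} and \eqref{M} together with \eqref{coe} one reads off
\begin{equation*}
|\Phi_f(v)|\le C\comi{v}^{1+\gamma}\norm{f}_{L_v^2},\qquad |\Lambda^\ell M^p\Phi_f(v)|\le C\comi{v}^{1+\gamma}\norm{\Lambda^\ell M^p(\phi(v)\sqrt{\mu}f)}_{L_v^2}
\end{equation*}
for suitable polynomial weights $\phi(v)$ of $v$-degree at most two (e.g.\ $\phi\equiv 1$, or $\phi(v)=v_j$, or $\phi(v)=\delta_{i,j}|v|^2-v_iv_j$). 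Each such $\phi$ obeys an estimate of the type \eqref{upgiven} (with a universal constant $L$), so Lemma \ref{lem:tec0}, Corollary \ref{cor:tec1} and Lemma \ref{lem:tec2} apply to $\phi f$ exactly as they applied to $\sqrt{\mu}f$ in the treatment of $S_1$. The weight $\comi v^{1+\gamma}$ rather than $\comi v^\gamma$ is harmless: half of it will be absorbed by the $\comi v^{1+\gamma/2}$-factor in $\psi(v,D_v)$ when the coefficient is paired with a bare multiplication operator $\mathcal P_i=v$, and the remaining $\comi v^{\gamma/2}$ is taken care of by the $\partial_v$ or $v\wedge\partial_v$ pieces of $\psi(v,D_v)$ just as in Lemma \ref{lem:L1}.

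Second, I would apply the Leibniz expansion \eqref{Leb} to each $\bigl[M^k,\Phi_f\bigr]$ and split the resulting sum into the four regimes treated in \eqref{j1j4}: the bulk range $2\le j\le 2k-2$ controlled by the multinomial identity \eqref{sum}, the extreme indices $j=2k$ and $j=0$ (where Lemma \ref{lem:tec0} bounds $M^k(\phi f)$ up to an uncontrolled $\norm{M^k f}_{(2,0)}$, absorbed by the small factor $C\epsilon$), and the near-extreme indices $j=2k-1$ and $j=1$ (handled via \eqref{diffest+} of Lemma \ref{lem:tec2} together with Cauchy--Schwarz, which is where the $\delta$-terms on the right-hand side are produced). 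The commutators $[M^k,v\wedge\partial_v]$, $[M^k,\partial_v]$ and $[M^k,v]$ are all at most first-order in $\Lambda$ times $M^{k-1}$, thanks to identities like \eqref{comwedge}; they are therefore treated exactly as the analogous term in \eqref{laineq}, the key technical input again being \eqref{diffest+}.

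The main obstacle I anticipate is bookkeeping the combined velocity weight in those pieces where an unbounded multiplier $\mathcal P_i=v$ appears on the right (as in $L_3,L_5,L_6$), because there the coefficient $\Phi_f$ already carries a $\comi v^{1+\gamma}$ bound and the test function $M^k f$ must be paired with the full $\comi v^{1+\gamma/2}$-weight from $\psi(v,D_v)$. This is purely a weight-tracking issue: the pointwise decay of $\sqrt{\mu}$ inside $\Phi_f$ absorbs any surplus polynomial, so each resulting trilinear bound factors as $(\textrm{weight }\comi v^{-1-\gamma/2})\Phi_f\cdot(\textrm{weight }\comi v^{1+\gamma/2}) M^k f$, whose first factor is uniformly bounded. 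Modulo this routine bookkeeping, every piece produces a bound of the same form as \eqref{las1et}; summing over $2\le j\le 6$ yields the claim with a new constant $C$.
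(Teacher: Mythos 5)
Your overall strategy -- decompose each $L_j$ as $\mathcal{P}_1\,\Phi_f\,\mathcal{P}_2$, expand the commutator, and feed the pieces into Lemma~\ref{lem:tec0}, Corollary~\ref{cor:tec1} and Lemma~\ref{lem:tec2} exactly as in the treatment of $S_1$ -- is precisely what the paper means when it says the argument is ``quite similar'' to Lemma~\ref{lem:L1} and omits it. Two precision points: the function that must satisfy \eqref{upgiven} for Lemma~\ref{lem:tec0} to apply is $\phi(v)\sqrt{\mu}(v)$, not the bare polynomial $\phi(v)$, which is unbounded and violates \eqref{upgiven}; your phrasing conflates the two, although it is clear you intend the product.

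There is, however, a genuine gap in your coefficient bound. You assert the uniform pointwise estimate $|\Phi_f(v)|\le C\langle v\rangle^{1+\gamma}\|f\|_{L_v^2}$ for all six coefficient classes, and then reason that $\langle v\rangle^{-1-\gamma/2}\Phi_f$ is uniformly bounded so the surplus weight is absorbed. That is false for $a_f,\ \boldsymbol{B}_f,\ \boldsymbol{A}_f$ and $M_{i,j,f}$: the paper shows in \eqref{uppaf} and at the end of the proof of Lemma~\ref{lem:L1} that these carry only $\langle v\rangle^{\gamma}$, and the $\langle v\rangle^{1+\gamma}$ bound applies only to $\rho_{i,j,f}$ and $\lambda_{i,j,f}$ (which contain an explicit external factor of $v$). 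If one feeds the loose $\langle v\rangle^{1+\gamma}$ bound for $M_{i,j,f}$ into, say, the $v_i M_{i,j,f}v_j$ piece of $L_3$, the total external weight becomes $\langle v\rangle^{3+\gamma}$, strictly more than the $\langle v\rangle^{2+\gamma}=\big(\langle v\rangle^{1+\gamma/2}\big)^2$ that $\psi(v,D_v)$ can absorb on both test functions, and Lemma~\ref{lem:tec2} supplies only $\langle v\rangle^{\gamma/2}$-weighted control of $\Lambda^\ell M^q\mathcal{P}f$. The argument that ``the pointwise decay of $\sqrt{\mu}$ inside $\Phi_f$ absorbs any surplus polynomial'' does not repair this, because $\sqrt{\mu}$ decays in the convolution variable $v_*$ and so only controls $v_*$-polynomials and the $\langle v_*\rangle^\gamma$ piece of the kernel; it cannot absorb external powers of $v$. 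The cure is simply to use the paper's sharp coefficient bounds term by term: then the total external $v$-degree in every piece of $L_2,\dots,L_6$ is at most $2+\gamma$, distributing as $\langle v\rangle^{1+\gamma/2}$ on each side, and your plan goes through unchanged.
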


\begin{proof}
	The argument  is quite similar as that in the proof of  Lemma \ref{lem:L1}. Since there is no additional difficulty,  we omit it for brevity.
\end{proof}

\begin{lemma}
	[Commutator between $M^k$ and $\mathcal L$] \label{lem: Linear} Suppose the  inductive assumption \eqref{ass:ind} in Proposition \ref{prop:com} holds. Then
\begin{eqnarray*}
 \int_{t_0}^1\big|\left([M^k,\,\mathcal{L}]\, f,\, M^k f\right)_{(2,0)}\big|dt
	\leq  \delta     \int_{t_0}^1\norm{ \psi(v,D_v)  M^{k}   f}_{(2,0)}^2dt
	  +  C_\delta \bigg[ \frac{\epsilon }{(2k+1)^3}  C_*^{ 2k-1} (2k)!  \bigg]^2.
\end{eqnarray*}	where $\delta>0$ is an arbitrarily small constant.
\end{lemma}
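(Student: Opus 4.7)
The plan is to expand $[M^k,\mathcal L]f$ using the identity $\mathcal L g=-\Gamma(\sqrt\mu,g)-\Gamma(g,\sqrt\mu)$ from \eqref{gm}, and then reduce the analysis to the commutators $M^kL_j(\sqrt\mu,f)-L_j(\sqrt\mu,M^kf)$ and $M^kL_j(f,\sqrt\mu)-L_j(M^kf,\sqrt\mu)$ for $1\leq j\leq 6$ via the decomposition in Proposition \ref{proprep}. This mirrors the structure treated in Lemmas \ref{lem:L1} and \ref{lem: L26}, the essential difference being that one of the two slots is now filled by the fixed Maxwellian factor $\sqrt\mu$, which from \eqref{FouGauss} satisfies the hypothesis \eqref{upgiven} of Lemma \ref{lem:tec0} with a universal constant $L$ independent of $\epsilon$ and $C_*$.

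The first step is to apply Lemma \ref{lem: leibniz} term by term. For a representative piece such as $M^k\bigl[(v\wedge\partial_v)\cdot a_{\sqrt\mu}(v\wedge\partial_v)f\bigr]-(v\wedge\partial_v)\cdot a_{\sqrt\mu}(v\wedge\partial_v)M^kf$, the Leibniz expansion starts from $j\geq 1$ because the $j=0$ term is exactly the second summand and cancels. Each remaining term is a product $(\Lambda^\ell M^p a_{\sqrt\mu})\cdot(\Lambda^\ell M^qf)$ with $\ell+2p=j\geq 1$ and $\ell+2q=2k-j$. The crucial gain compared to Lemma \ref{lem:L1} is that the coefficient derivatives $\Lambda^\ell M^pa_{\sqrt\mu},\Lambda^\ell M^pM_{i,j,\sqrt\mu},\Lambda^\ell M^p\boldsymbol{B}_{\sqrt\mu}$ are bounded by $C^{\ell+2p+1}(\ell+2p)!$ uniformly in $\epsilon,C_*$, rather than costing a factor $\epsilon C_*^{\ell+2p}$ from the inductive assumption. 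Combined with Corollary \ref{cor:tec1} and Lemma \ref{lem:tec2} applied to the $f$-derivatives, and the combinatorial identity \eqref{sum} which converts $\sum c^{k,j}_{\ell,p,q}j!(2k-j)!$ to $(2k)!$, one obtains a bound of the form
\begin{equation*}
C(2k)!\sum_{j\geq 1}C^{j+1}\frac{\epsilon\,C_*^{2k-j}}{(2k-j+1)^3}\leq C\,\frac{\epsilon\,C_*^{2k-1}(2k)!}{(2k+1)^3},
\end{equation*}
after splitting the sum into $1\leq j\leq k$ and $k+1\leq j\leq 2k$ and choosing $C_*$ sufficiently large, just as in the proof of Lemma \ref{lem:tec0}.

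The endpoints of the Leibniz sum require special care and provide the third and fourth categories of terms. When $j=1$, a single $\Lambda$-derivative lands on $\sqrt\mu$ while the $f$-side carries $\Lambda^1 M^{k-1}$, and the estimate \eqref{diffest+} in Lemma \ref{lem:tec2} must be invoked; this produces a contribution that splits as $\delta\int_{t_0}^1\norm{\psi(v,D_v)M^kf}_{(2,0)}^2dt+C_\delta[\epsilon C_*^{2k-1}(2k)!/(2k+1)^3]^2$ after Cauchy's inequality. Symmetrically, $j=2k$ sends all derivatives onto $\sqrt\mu$ and leaves $f$ with at most one derivative, which is harmless because $\norm{M^k\sqrt\mu}$ is bounded by $C^{2k+1}(2k)!$ regardless of $\epsilon$. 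The analogous argument applied to $[M^k,L_j(\cdot,\sqrt\mu)]$ is identical, with $f$ and $\sqrt\mu$ playing swapped roles in the two slots; the weights in the coefficients $a,M_{i,j},\boldsymbol{B}$ now depend on $f$ and are handled by Corollary \ref{cor:tec1}, but the differential operator acts on $\sqrt\mu$, producing only fixed smooth functions in $v$ whose $L^\infty$-norms are bounded uniformly.

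The principal obstacle is the bookkeeping: ensuring that the extra factor $C_*^{-1}$ really appears (so that the bound is $C_*^{2k-1}$ rather than $C_*^{2k}$), and that all endpoint contributions at $j=1$ and $j=2k-1$, which cannot be summed geometrically, are absorbed either into the coercivity term $\delta\int\norm{\psi(v,D_v)M^kf}_{(2,0)}^2dt$ through Cauchy's inequality, or into the allowable right-hand side $C_\delta[\epsilon C_*^{2k-1}(2k)!/(2k+1)^3]^2$. Once these edge cases are handled via \eqref{diffest+} in Lemma \ref{lem:tec2}, the remainder of the proof is a routine application of the same machinery developed in Lemmas \ref{lem:tec0}-\ref{lem: L26}, so no further analytic input is needed.
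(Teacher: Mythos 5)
Your proposal is correct and matches the paper's approach. The paper's own proof is extremely brief: it records the bound $\norm{\Lambda^\ell M^p\mu}_{L_v^2}\leq 16^{\ell+2p+1}(\ell+2p)!$ (following from $M^j\mu=-(t-t_0)^j\partial_{v_1}^{2j}\mu$, Lemma \ref{MM} and \eqref{FouGauss}), and then simply says to repeat the argument of Lemma \ref{lem:L1} with this uniform bound used in place of Corollary \ref{cor:tec1}. Your write-up unpacks exactly that one-line instruction: the decomposition $-[M^k,\mathcal L]f=\sum_j(M^kL_j(\sqrt\mu,f)-L_j(\sqrt\mu,M^kf))+\sum_j(M^kL_j(f,\sqrt\mu)-L_j(M^kf,\sqrt\mu))$, the Leibniz expansion via Lemma \ref{lem: leibniz}, the observation that the $\sqrt\mu$-slot satisfies \eqref{upgiven} with an $\epsilon$- and $C_*$-independent constant (which is what buys one factor of $\epsilon$ instead of $\epsilon^2$ and one fewer power of $C_*$), the use of Corollary \ref{cor:tec1} and Lemma \ref{lem:tec2} on the $f$-slot, the combinatorial identity \eqref{sum}, and the absorption of the $j=1$ and $j=2k-1$ endpoint terms into $\delta\int_{t_0}^1\norm{\psi(v,D_v)M^kf}_{(2,0)}^2dt$ via \eqref{diffest+} and Cauchy's inequality. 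You correctly identify why the final bound has no $\sup_{t}\norm{M^kf}_{(2,0)}^2$ term, unlike Lemmas \ref{lem:L1} and \ref{lem: L26}: the top-order contribution of the Leibniz expansion cancels exactly against the subtracted term in the commutator, so at most $M^{k-1}$ together with one $\Lambda$ ever lands on $f$.
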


\begin{proof}
Observe $M^j \mu=-(t-t_0)^j\partial_{v_1}^{2j}\mu$, and  thus  it follows from   Lemma \ref{MM} and
	   \eqref{FouGauss} that
		\begin{equation}\label{esformu}
		\forall\ (\ell, p)\in\mathbb Z_+^2,\quad  \norm{\Lambda^\ell  M^p \mu }_{L_v^2}\leq \norm{   M^{\frac{\ell}{2}+p} \mu }_{L_v^2}  \leq  16^{ \ell+2p+1} (\ell+2p)!.
	\end{equation}
	Then following the argument in the proof of Lemma \ref{lem:L1} and using  \eqref{esformu} instead of  Corollary \ref{cor:tec1}, we conclude by direct computation  that
	\begin{multline*}
			\sum_{1\leq j\leq 6}\Big|	\int_{t_0}^1 \Big(M^kL_j(\sqrt\mu, f)-L_j\big(\sqrt\mu, M^k f\big), M^kf\Big)_{(2,0)}dt \Big| \\
	\leq  \delta   \int_{t_0}^1\norm{ \psi(v,D_v)  M^{k}   f}_{(2,0)}^2dt
	  +  C_\delta \bigg[ \frac{\epsilon }{(2k+1)^3}  C_*^{ 2k-1} (2k)!    \bigg]^2.
	\end{multline*}
	Similarly,
	\begin{multline*}
			\sum_{1\leq j\leq 6}\Big|	\int_{t_0}^1 \Big(M^kL_j(f, \sqrt\mu)-L_j\big(M^k f, \sqrt\mu\big), M^kf\Big)_{(2,0)}dt \Big| \\
	\leq  \delta   \int_{t_0}^1\norm{ \psi(v,D_v)  M^{k}   f}_{(2,0)}^2dt
	  +  C_\delta \bigg[ \frac{\epsilon }{(2k+1)^3}  C_*^{ 2k-1} (2k)!    \bigg]^2.
	\end{multline*}
	As a result,  observing
	\begin{align*}
		-[M^k,\,\mathcal{L}] f&=\Big(M^k\Gamma(\sqrt\mu, f)-\Gamma(\sqrt\mu, M^kf)\Big)+\Big(M^k\Gamma(f, \sqrt\mu)-\Gamma(M^kf, \sqrt\mu)\Big)\\
		&=\sum_{1\leq j\leq 6}\Big(M^kL_j(\sqrt\mu, f)-L_j(\sqrt\mu, M^kf)\Big)+\sum_{1\leq j\leq 6}\Big(M^kL_j(f, \sqrt\mu)-L_j(M^kf, \sqrt\mu)\Big)
	\end{align*}
	due to  Proposition \ref{proprep}, we  obtain the assertion in Lemma \ref{lem: Linear}, completing the proof.
\end{proof}

\begin{proof}
	[Proof of Proposition \ref{prop:com}] Combining the estimates in Lemmas \ref{lem:L1} and \ref{lem: L26} with the presentation of $\Gamma(f,f)$ given in Proposition \ref{proprep}, we conclude
	\begin{multline*}
	\int_{t_0}^1\big|\left(M^k\Gamma(f, f)-\Gamma(f, M^kf), M^k f\right)_{(2,0)}\big|  \\
	\leq \inner{ \delta +\epsilon C}\bigg[ \Big(\sup_{t_0\leq t\leq 1} \norm{M^{k} f }_{(2,0)}\Big)^2+  \int_{t_0}^1\norm{ \psi(v,D_v)  M^{k}   f}_{(2,0)}^2dt   \bigg] 	  +  C_\delta \bigg[ \frac{\epsilon^2 }{(2k+1)^3}  C_*^{ 2k} (2k)!    \bigg]^2.
\end{multline*}
This with Lemma \ref{lem: Linear} yields the assertion in
  Proposition \ref{prop:com}. The proof  is completed. 	
\end{proof}

\section{Analytic   regularization effect of weak solutions}

In this part we complete  the proof of Theorem \ref{thm:main}.  We begin with the existence and Gevrey regularity of weak solutions to \eqref{cau}, and then improve the Gevrey regularity to analyticity.
Recall the Gevrey class, denoted by $G^\sigma$,   consists of all $C^\infty$ smooth functions $g$ such that
 \begin{eqnarray*}
 \exists\ C>0,\ 	\forall \ \alpha,\beta\in\mathbb Z_+^3,\quad \norm{ {\partial_x^\alpha\partial_v^\beta g}}_{L^2}
 \leq   C^{|\alpha|+|\beta|+1}[ (|\alpha|+|\beta|) !]^{\sigma}.
 \end{eqnarray*}

   \begin{theorem}\label{thm: Gevrey}
   Under the same assumption as in Theorem \ref{thm:main},  the Cauchy problem \eqref{cau} admits
   a unique global-in-time  solution $f $ satisfying that
  \begin{eqnarray*}
  	 \sup_{t>0}   \norm{  f(t) }_{L^2} +\bigg(\int_0^{+\infty}     \norm{\psi(v,D_v)   f (t) }_{L^2} ^2ds\bigg)^{1\over2}
 \leq   C_0 \eps_0
  \end{eqnarray*}
   for some
     constant $C_0\geq 1$ depending only the initial data, where $\eps_0$ is the small sufficiently number given in \eqref{123}.  Moreover the following estimate
    \begin{equation}\label{thm.gest}
   \sup_{t>0} \tilde t^{\frac{3}{2} (\abs{\alpha}+|\beta|)}   \norm{ {\partial_x^\alpha\partial_v^\beta  f(t)}}_{L^2} +\bigg(\int_0^{+\infty}  \tilde t^{3(\abs{\alpha}+|\beta|)}   \norm{\psi(v,D_v) {\partial_x^\alpha\partial_v^\beta f (t)}}_{L^2} ^2ds\bigg)^{1\over2}
 \leq   C_0^{|\alpha|+|\beta|+1} (|\alpha|+|\beta|) ! ^{\frac{3}{2}}
 \end{equation}
holds true  for all  $\alpha, \beta\in\mathbb Z_+^3$,  recalling $\tilde t:=\min\big\{t,  1\big\}$.
 \end{theorem}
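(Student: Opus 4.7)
The plan breaks the proof into three stages: global existence with an $L^2$ bound; $H^\infty$ smoothing at positive time; and Gevrey-$3/2$ derivative bounds by induction on $N=|\alpha|+|\beta|$. For the first stage, global existence of a mild solution with initial datum in $L^1_m L^2_v$ is supplied by Duan--Liu--Sakamoto--Strain \cite{MR4230064}. To obtain the $L^2(\mathbb T^3\times\mathbb R^3)$ energy bound, I would test \eqref{cau} against $f$, use the coercivity estimate \eqref{coer} on $(\mathcal L f,f)_{L^2}$, and bound the nonlinear contribution via the trilinear estimate \eqref{trili} as $|(\Gamma(f,f),f)_{L^2}|\leq C\|f\|_{H^2_x L^2_v}\|\psi(v,D_v)f\|_{L^2}^2$ (via the Sobolev embedding $H^2_x\hookrightarrow L^\infty_x$ on the first factor); smallness of $\eps_0$ absorbs the nonlinear term into the dissipation, and Gronwall closes the estimate.

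For the second stage, $H^\infty$ smoothing at positive time comes from the hypoelliptic interaction between velocity dissipation and transport. Velocity derivatives gain regularity directly from coercivity, while the identity $[\partial_{v_j},v\cdot\partial_x]=\partial_{x_j}$ converts each velocity derivative into a spatial one at the cost of a factor $\tilde t$. Weighting $\partial_x^\alpha\partial_v^\beta f$ by $\tilde t^{3|\alpha|/2+|\beta|/2}$ in the spirit of the sharp estimate \eqref{decayana} and iterating in the order of derivatives yields finite norms at every order, which in particular supplies the hypothesis \eqref{hinfty} needed by Theorem \ref{thm:key}.

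The heart of the proof is the third stage. With $N=|\alpha|+|\beta|$, I would prove by induction on $N$ that
\begin{equation*}
\mathcal F_N(t):=\sum_{|\alpha|+|\beta|=N}\Big\{\tilde t^{3N}\|\partial_x^\alpha\partial_v^\beta f(t)\|_{L^2}^2+\int_0^t\tilde s^{3N}\|\psi(v,D_v)\partial_x^\alpha\partial_v^\beta f(s)\|_{L^2}^2\,ds\Big\}\leq \big(C_0^{N+1}(N!)^{3/2}\big)^2.
\end{equation*}
Applying $\partial_x^\alpha\partial_v^\beta$ to \eqref{cau}, pairing with $\tilde t^{3N}\partial_x^\alpha\partial_v^\beta f$, and invoking \eqref{coer}, I obtain four kinds of contributions: the weight-derivative term $\frac{3N}{2}\tilde t^{3N-1}\|\partial_x^\alpha\partial_v^\beta f\|^2$, the transport commutator $\sum_{j:\beta_j\geq 1}\beta_j\partial_x^{\alpha+e_j}\partial_v^{\beta-e_j}f$ (which stays at total order $N$ and is absorbed into the dissipation via Young's inequality and integration by parts), the Leibniz expansion of $\partial_x^\alpha\partial_v^\beta\Gamma(f,f)$, and the commutator $[\partial_x^\alpha\partial_v^\beta,\mathcal L]f$.

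For the nonlinear term, Leibniz produces pieces $\Gamma(\partial_x^{\alpha_1}\partial_v^{\beta_1}f,\partial_x^{\alpha_2}\partial_v^{\beta_2}f)$ with $\alpha_1+\alpha_2=\alpha$ and $\beta_1+\beta_2=\beta$; using \eqref{trili} and the $H^2_x$ Sobolev trick on the lower-order factor, the sum closes by the Gevrey-$3/2$ combinatorial identity
\begin{equation*}
\sum_{\alpha_1\leq\alpha,\beta_1\leq\beta}\binom{\alpha}{\alpha_1}\binom{\beta}{\beta_1}(|\alpha_1|+|\beta_1|)!^{3/2}(|\alpha-\alpha_1|+|\beta-\beta_1|)!^{3/2}\leq C\,(N!)^{3/2},
\end{equation*}
with the two endpoint terms containing a bare $f$-factor absorbed by smallness of $\eps_0$. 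The linear commutator $[\partial_x^\alpha\partial_v^\beta,\mathcal L]f$ is treated analogously using the decomposition of $\mathcal L$ in Proposition \ref{proprep}: each derivative of a Maxwellian-weighted coefficient of $L_j$ costs a factorial of order $N$, easily accommodated by the Gevrey-$3/2$ budget. The main obstacle is the combinatorial bookkeeping of the many Leibniz terms, in the spirit of but much simpler than Lemmas \ref{lem:L1}--\ref{lem: Linear}; since the Gevrey-$3/2$ budget $(N!)^{3/2}$ is far more forgiving than the analytic budget $N!$, the constant $C_0$ can be chosen once and for all so that the recurrence closes uniformly in $N$, and Gronwall then yields the stated bound.
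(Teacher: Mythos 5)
Your high-level structure (existence via Duan--Liu--Sakamoto--Strain, then $H^\infty$ smoothing, then a weighted induction in the derivative order $N=|\alpha|+|\beta|$ closing on a Gevrey-$3/2$ budget) is reasonable, and your combinatorial bound $\sum\binom{N}{N_1}(N_1!)^{3/2}((N-N_1)!)^{3/2}\leq C(N!)^{3/2}$ is correct (it reduces to $\sum_{N_1}\binom{N}{N_1}^{-1/2}\leq C$). However, your argument diverges from the paper's route and has a real gap at its most delicate point. The paper proves this theorem essentially by citation: global existence in $L^1_mL^2_v$ is taken from Duan--Liu--Sakamoto--Strain \cite{MR4230064}, and the Gevrey-$3/2$ estimate \eqref{thm.gest} is obtained by reproducing the phase-space / pseudo-differential machinery of \cite{MR4356815}, built around the symbol $\comi v^{\gamma}(1+v^2+\eta^2+(v\wedge\eta)^2)^s$, with details referred to \cite{caoliu}. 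You propose instead a direct Leibniz induction on plain mixed derivatives.

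The gap is in your treatment of the transport commutator. After applying $\partial_x^\alpha\partial_v^\beta$ to \eqref{cau}, the term $[\partial_v^\beta,v\cdot\partial_x]f=-\sum_j\beta_j\,\partial_x^{\alpha+e_j}\partial_v^{\beta-e_j}f$ stays at total order $N$ and carries a coefficient $\beta_j\sim N$. You assert it is ``absorbed into the dissipation via Young's inequality and integration by parts,'' but neither operation helps here: the dissipation $\|\psi(v,D_v)\partial_x^\alpha\partial_v^\beta f\|^2$ only controls one extra \emph{velocity} derivative, whereas the commutator converts a $v$-derivative into an $x$-derivative --- precisely the direction the dissipation cannot reach. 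Pairing with $\tilde t^{3N}\partial_x^\alpha\partial_v^\beta f$ and Cauchy--Schwarz leaves a contribution of size $\sim N\,\mathcal F_N(t)$; feeding this into Gronwall yields a growth factor $e^{CNt}$, which is $e^{CN}$ on $[0,1]$ and cannot be absorbed by a fixed $C_0^{N+1}$ on top of the $N^{3/2}$ gain one has when passing from $(N-1)$ to $N$. Integration by parts merely shifts the order split from $(N,N)$ to $(N-1,N+1)$ and does not produce a cancellation across the sum over $(\alpha,\beta)$ at level $N$. This is exactly the hypoelliptic obstruction that forces either (i) the microlocal symbol calculus of \cite{MR4356815}, or (ii) the paper's own time-average operator $M$ with its favorable commutator identity $[M,\partial_t+v\cdot\partial_x]=\partial_{v_1}^2$. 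Your stage 1 also needs a small fix: closing a trilinear estimate with the factor $\|f\|_{H^2_xL^2_v}$ requires $H^2_x$ control that the initial datum does not have; for $t$ near $0$ one must use the $L^1_mL^2_v$ norm (this is how \cite{MR4230064} sets things up), passing to $H^2_xL^2_v$ only after some smoothing has occurred.
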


\begin{proof}
	[Sketch of the proof of Theorem \ref{thm: Gevrey}] The global-in-time existence and uniqueness of mild solutions $f(t,x,v)$ in the low-regularity space $L^1_m L^2_v\subset L^2$ was   established by \cite[Theorem 2.1]{MR4230064}.  Furthermore we may  follow the presentation in  \cite{MR4356815} with necessary modifications, to conclude
	 the global Gevrey smoothing effect of such low-regularity solutions in the sense that $f(t,\cdot,\cdot)\in G^{3/2}$ for any $t>0$, that is,  the quantitative estimate \eqref{thm.gest} is satisfied globally in time.
	
	 Observe the proof in \cite{MR4356815}  relies on the same trilinear and coercivity estimates  for Boltzmann collision operator as  that in Proposition \ref{prop: tricoe},  the pseudo-differential  calculus for the symbol
	 \begin{eqnarray*}
	 	\comi v^{\gamma} \inner{1+v^2+\eta^2+\inner{v\wedge\eta}^2}^s,\quad  s\in]0,1[,
	 \end{eqnarray*}
	 which corresponds to $s=1$ in the case of Landau equations.  Thus the estimate \eqref{thm.gest} will follow without any   additional difficulty by applying the same strategy as in   \cite{MR4356815}, and we refer to interested reads to \cite{caoliu} for the detailed derivation of \eqref{thm.gest}.
\end{proof}

\begin{proof}
	[Completing the proof of Theorem \ref{thm:main}] Let    $f $   be the unique global-in-time solution to \eqref{cau} constructed in Theorem \ref{thm: Gevrey} such that    the quantitative estimate \eqref{thm.gest} is fulfilled.     In view of \eqref{thm.gest},  it follows that $f\in L^\infty\inner{[t_0, +\infty[; H^{+\infty}}$ for any $0<t_0\leq 1/2$ and moreover
   \begin{eqnarray*}
\sup_{t_0\leq t\leq 1}	\norm{f(t)}_{(2,0)} + \bigg(\int_{t_0}^1 \norm{\psi(v,D_v)  f(t)}_{(2,0)}^2dt\bigg)^{1\over2} \leq \big( 2 C_0/t_0\big)^3 \eps_0
\end{eqnarray*}
and
\begin{equation*}
 \forall\ \alpha,\beta\in\mathbb Z_+^3, \quad	 \sup_{t_0\leq t\leq 1}   \norm{ {\partial_x^\alpha\partial_v^\beta  f(t)}}_{(2,0)} +\bigg (\int_{t_0}^{1}   \norm{\psi(v,D_v) {\partial_x^\alpha\partial_v^\beta f (t)}}_{L^2} ^2ds\bigg)^{1\over2}
 <+\infty.
\end{equation*}
Then the assumptions in Theorem \ref{thm:key} are fulfilled with $\epsilon=\big( 2 C_0/t_0\big)^3 \eps_0$,  and thus
we  apply Theorem \ref{thm:key}  to conclude that there exists a constant $C_*\geq 1,$   such that  \begin{equation}\label{fmul}
\forall k\ \in\mathbb Z_+,\quad 	\sup_{t_0\leq t\leq 1} \norm{   M^{k}f(t)}_{(2,0)}+\Big(\int_{t_0}^1 \norm{\psi(v,D_v)    M^{k} f(t)}_{(2, 0)}^2dt\Big)^{1\over2} \leq  \frac{\big( 2 C_0/t_0\big)^3 \eps_0}{(2k+1)^3}  C_*^{2k}  (2k)!.
\end{equation}
Recall $   M$ is a Fourier multiplier with symbol
\begin{eqnarray*}
	  (t-t_0)\eta_1^2+(t-t_0)^2 \eta_1m_1  +\frac{(t-t_0)^3}{3} m_1^2,
\end{eqnarray*}
which   with  \eqref{fmul} and \eqref{elliptic}
implies,  for any $t_0\in \big ]0,  \ 1/2 \big ],$
\begin{equation*}
  \sup_{t\in[t_0,1]}\Big[ (t-t_0)^k\norm{\partial_{v_1}^{2k}f(t)}_{(2,0)}+(t-t_0)^{3k}\norm{\partial_{x_1}^{2k}f(t)}_{(2,0)}\Big] \leq \big( 2 C_0/t_0\big)^3 \eps_0  (C_*/c_0)^{2k}  (2k)!
\end{equation*}
with $c_0$ the constant given in \eqref{elliptic}.  In particular, letting $t=2t_0\in [t_0, 1]$ in the above estimate yields
\begin{equation*}
\forall\ k\in\mathbb Z_+,\ 	\forall\  t_0\in \big ]0,  \ 1/2 \big ], \quad      t_0 ^k\norm{\partial_{v_1}^{2k}f(2t_0)}_{(2,0)}+ t_0 ^{3k}\norm{\partial_{x_1}^{2k}f(2t_0)}_{(2,0)}  \leq t_0^{-3} \big( 2 C_0\big)^3 \eps_0  (C_*/c_0)^{2k}  (2k)!,
\end{equation*}
that is,
\begin{equation}\label{rev1}
	\forall\ k\in\mathbb Z_+,\ \,	\forall\  t \in \big ]0,  \ 1  \big ], \quad    t  ^{k+3}\norm{\partial_{v_1}^{2k}f(t)}_{(2,0)}+ t ^{3k+3}\norm{\partial_{x_1}^{2k}f(t)}_{(2,0)}  \leq  \big( 4C_0\big)^3 \eps_0  (3C_*/c_0)^{2k}  (2k)! .
\end{equation}
By virtue of the last assertion in Theorem \ref{thm:key},  the estimate \eqref{fmul} also holds  with $   M$ replaced by
  \begin{eqnarray*}
	-  (t-t_0)\partial_{v_i}^2-(t-t_0)^2 \partial_{x_i}  \partial_{v_i}-\frac{(t-t_0)^3}{3}\partial_{x_i}^2,\quad i=2 \textrm { or } 3,
\end{eqnarray*}
which implies  the validity of \eqref{rev1} with $\partial_{x_1} $  replaced by $\partial_{x_2} $ or $\partial_{x_3} $,  and $\partial_{v_1} $ by $\partial_{v_2} $ or $\partial_{v_3}  $.  As a result, we combine \eqref{rev1} with  the fact
\begin{eqnarray*}
\forall\ \alpha\in\mathbb Z_+^3,\quad 	\norm{\partial_x^\alpha f}_{L^2}\leq \norm{\partial_{x_1}^{\abs \alpha} f}_{L^2}+\norm{\partial_{x_2}^{\abs \alpha} f}_{L^2}+\norm{\partial_{x_3}^{\abs \alpha} f}_{L^2}
\end{eqnarray*}
and similarly for $\norm{\partial_v^\alpha f}_{L^2}$,  to conclude
\begin{equation}\label{muest}
\begin{aligned}
		\forall\ \alpha\in\mathbb Z_+^3,  \quad  \sup_{0<t\leq 1}   \Big(t  ^{\abs\alpha+3}\norm{\partial_{v}^{2\alpha}f(t)}_{(2,0)}+ t ^{3\abs\alpha+3}\norm{\partial_{x}^{2\alpha}f(t)}_{(2,0)}\Big)  \leq  \big( 4 C_0\big)^3 \eps_0  (3C_*/c_0)^{2\abs\alpha}  (2\abs\alpha)!.
\end{aligned}
\end{equation}
Then,   for any $0<t\leq 1$ and   any $\alpha,\beta\in\mathbb Z_+^3$  with $\abs\alpha\geq2$, we can write $\alpha=\tilde \alpha+(\alpha-\tilde\alpha)$ with $|\alpha-\tilde\alpha|=2$ and thus
\begin{align*}
&	t^{\frac{3}{2}\abs\alpha+ \frac{\abs\beta}{2}}\norm{\partial_x^\alpha \partial_{v}^\beta f(t)}_{L^2} \leq t^{\frac{3}{2}\abs{\tilde\alpha}+ \frac{\abs\beta}{2}+3}\norm{\partial_x^{\tilde\alpha} \partial_{v}^\beta f(t)}_{(2,0)} \leq  t^{\frac{3}{2}\abs\alpha+ \frac{\abs\beta}{2}+3} \norm{\partial_x^{2\tilde \alpha }  f(t)}_{(2,0)}^{1/2}\norm{\partial_{v}^{2\beta} f(t)}_{(2,0)}^{1/2}\\
	&\leq  \Big( t^{3\abs\alpha+  3} \norm{\partial_x^{2\tilde \alpha }  f(t)}_{(2,0)}\Big)^{1/2} \Big(t^{ \abs\beta+3} \norm{\partial_{v}^{2\beta} f(t)}_{(2,0)}\Big)^{1/2}\\
	&\leq   \big( 4C_0\big)^3 \eps_0   \Big( (3C_*/c_0)^{2\abs\alpha+2\abs\beta}  (2\abs\alpha)!(2\abs\beta)!\Big)^{1/2}\leq  \big( 4C_0\big)^3 \eps_0    (6C_*/c_0)^{ \abs\alpha+ \abs\beta}  (\abs\alpha+\abs\beta)!,
\end{align*}
the last line using \eqref{muest} and the fact that $p!q!\leq (p+q)!\leq 2^{p+q}p!q!$ for any $p,q\in\mathbb Z_+.$  Meanwhile, for any   $0\leq t\leq 1$ and  any $\alpha,\beta\in\mathbb Z_+^3$  with $\abs\alpha\leq 1,$  we use the fact that $2\abs\alpha\leq 2$ to compute
\begin{align*}
	t^{\frac{3}{2}+  \frac{\abs\beta}{2}}\norm{\partial_x^\alpha \partial_{v}^\beta f(t)}_{L^2}&\leq t^{ \frac{3}{2}+ \frac{\abs\beta}{2}} \norm{ \partial_{v}^{2\beta} f(t)}_{L^2}^{1/2}\norm{ \partial_x^{2  \alpha } f(t)}_{L^2}^{1/2} \leq  \Big( t^{\abs \beta+  3} \norm{\partial_v^{2\beta }  f(t)}_{(2,0)} \Big)^{1/2} \Big( \norm{ f(t)}_{(2,0)}\Big)^{1/2}\\
	&\leq   \big( 4C_0\big)^3 \eps_0 \Big( (3C_*/c_0)^{ 2\abs\beta}   (2\abs\beta)!\Big)^{1/2}\leq   \big( 4C_0\big)^3 \eps_0  (6C_*/c_0)^{   \abs\beta}  ( \abs\beta)!.
\end{align*}
We have proven the assertions \eqref{decayana} and \eqref{decayrate} in Theorem \ref{thm:main} for $0< t\leq 1$ by choosing $C=\max\big\{(4C_0)^3, 6C_*/c_0\big\}$.

 Once the analyticity regularization effect  is achieved for $0\leq t\leq 1$,   it is essentially the propagation of  analyticity  from  $t=1$ to $t>1$ when deriving the analyticity for $t>1$. This will follow  by performing     standard energy estimates for $\norm{\partial_x^\alpha\partial_v^\beta f(t)}_{(2,0)}$ at $t\in ]1,+\infty[$ and there is  no additional difficulty. So we omit it for brevity.  The proof of Theorem \ref{thm:main} is thus completed.
 \end{proof}

\bigskip
\noindent
{\bf Acknowledgments.}
H. M. Cao was supported by the NSFC (No. 12001269) and the Fundamental Research Funds
for the Central Universities of China.
W.-X. Li was supported by the NSFC (Nos. 11961160716, 11871054, 12131017) and the Natural Science Foundation of Hubei Province (No. 2019CFA007).
C.-J. Xu was supported by the NSFC (No.12031006) and the Fundamental Research Funds
for the Central Universities of China.


  \end{document}